\documentclass[reqno, 11pt]{amsart}
\usepackage[left=2cm, right=2cm, top=2cm, bottom=2cm]{geometry}
\usepackage[T1]{fontenc}
\usepackage[utf8]{inputenc}	
\usepackage[english]{babel}
\usepackage[babel]{csquotes}
\usepackage{amssymb}
\usepackage{amsmath}
\usepackage{amsthm}
\usepackage{latexsym}
\usepackage{xcolor}
\usepackage{mathrsfs}
\usepackage{bbm}
\usepackage{verbatim}
\usepackage{graphicx}
\usepackage{array}
\usepackage{tikz}
\usepackage{subcaption}
\usepackage{tabularx}
\usepackage{pgfplots}
\usepackage{float}
\usepgfplotslibrary{fillbetween}

\newcolumntype{M}[1]{>{\centering\arraybackslash}m{#1}}
\usepackage[colorlinks=true, pdfstartview=FitV, linkcolor=blue, citecolor=blue, urlcolor=blue]{hyperref}
\usepackage[style=numeric, sorting=nty, sortcites=true, backend=bibtex, maxbibnames=10, giveninits=true]{biblatex}
\usepackage{faktor}
\usepackage[shortlabels]{enumitem}
\usepackage[ruled,vlined]{algorithm2e}

\definecolor{myblue}{RGB}{31,119,180}
\definecolor{myorange}{RGB}{255,127,14}
\definecolor{mygreen}{RGB}{44,160,44}
\definecolor{myred}{RGB}{214,39,40}
\definecolor{mypurple}{RGB}{148,103,189}
\definecolor{mybrown}{RGB}{140,86,75}

\numberwithin{equation}{section}
\newtheorem{thm}{Theorem}[section]

\newtheorem{lem}[thm]{Lemma}

\theoremstyle{definition}
\newtheorem{defin}[thm]{Definition}
\newtheorem{remark}[thm]{Remark}

\renewcommand{\d}{{\mathrm d}} 
\newcommand{\norm}[1]{\left\|#1\right\|} 
\newcommand{\ip}[2]{\left\langle #1,#2 \right\rangle} 

\def\supp{\sup_{\tau \in [0,t]}}

\def\uone{u_{\lambda_1}}
\def\utwo{u_{\lambda_2}}

\def\bn{{\boldsymbol n}}

\def\A{\mathcal A}
\def\bH{{\boldsymbol H}}

\def\b #1{{\boldsymbol #1}}
\def\enne{\mathbb{N}}

\def\erre{\mathbb{R}}

\def\P{\mathbb{P}}

\def\E{\mathop{{}\mathbb{E}}}

\def\cL{\mathscr{L}}
\def\cF{\mathscr{F}}
\def\cB{\mathscr{B}}
\def\eps{\varepsilon}
\def\cP{\mathscr{P}}
\def\cZ{\mathscr{Z}}
\def\OO{\mathcal{O}}
\def\embed{\hookrightarrow}
\makeatletter
\DeclareFontFamily{OMX}{MnSymbolE}{}
\DeclareSymbolFont{MnLargeSymbols}{OMX}{MnSymbolE}{m}{n}
\SetSymbolFont{MnLargeSymbols}{bold}{OMX}{MnSymbolE}{b}{n}
\DeclareFontShape{OMX}{MnSymbolE}{m}{n}{
	<-6>  MnSymbolE5
	<6-7>  MnSymbolE6
	<7-8>  MnSymbolE7
	<8-9>  MnSymbolE8
	<9-10> MnSymbolE9
	<10-12> MnSymbolE10
	<12->   MnSymbolE12
}{}
\DeclareFontShape{OMX}{MnSymbolE}{b}{n}{
	<-6>  MnSymbolE-Bold5
	<6-7>  MnSymbolE-Bold6
	<7-8>  MnSymbolE-Bold7
	<8-9>  MnSymbolE-Bold8
	<9-10> MnSymbolE-Bold9
	<10-12> MnSymbolE-Bold10
	<12->   MnSymbolE-Bold12
}{}

\let\llangle\@undefined
\let\rrangle\@undefined
\DeclareMathDelimiter{\llangle}{\mathopen}%
{MnLargeSymbols}{'164}{MnLargeSymbols}{'164}
\DeclareMathDelimiter{\rrangle}{\mathclose}%
{MnLargeSymbols}{'171}{MnLargeSymbols}{'171}
\makeatother
\AtEveryBibitem{%
	\clearfield{doi}%
	\clearfield{url}%
	\clearfield{issn}%
	\clearfield{isbn}
} 
\renewbibmacro*{publisher+location+date}{%
	\ifentrytype{book}
	{\printlist{publisher}%
		\iflistundef{location}
		{\setunit*{\addcomma\space}}
		{\setunit*{\addcomma\space}}%
		\printlist{location}}
	{\printlist{location}%
		\iflistundef{publisher}
		{\setunit*{\addcomma\space}}
		{\setunit*{\addcomma\space}}%
		\printlist{publisher}}
	\setunit*{\addcomma\space}%
	\usebibmacro{date}%
	\newunit} 

\def\andrea#1{{\color{black}#1}}
\def\marghe#1{{\color{black}#1}}
\def\luca#1{{\color{black}#1}}

\begin{document}	
	\title[An AC equation with jump-diffusion noise for biological damage and repair processes]{An Allen--Cahn equation with jump-diffusion noise for biological damage and repair processes}
	
	\author{Andrea Di Primio \and Marvin Fritz \and Luca Scarpa \and Margherita Zanella}
	\address{Classe di Scienze, Scuola Normale Superiore, Piazza dei Cavalieri 7, 56126 Pisa, Italy}
	\email{andrea.diprimio@sns.it}
    \address{Johann Radon Institute for Computational and Applied Mathematics, Altenberger Str. 69, 4040 Linz, Austria} \email{marvin.fritz@ricam.oeaw.ac.at}
    \address{Dipartimento di Matematica,
		Politecnico di Milano, Via E.~Bonardi 9, 20133 Milano, Italy}
	\email{luca.scarpa@polimi.it}
    \address{Dipartimento di Matematica,
		Politecnico di Milano, Via E.~Bonardi 9, 20133 Milano, Italy}
	\email{margherita.zanella@polimi.it}
	\subjclass[2020]{35Q35, 35R60, 60H15, 65C30, 76T06}
	\keywords{Stochastic Allen--Cahn equations; jump-diffusion noise; well-posedness; invariant measures; numerical simulations}
	
	\begin{abstract}
		This paper analyzes a stochastic Allen--Cahn equation for the dynamics of biomolecular damage and repair.
        The system is driven by two distinct noise processes: a multiplicative cylindrical Wiener process, modeling continuous background stochastic fluctuations, and a jump-type noise, modeling the abrupt, localized damage 
        induced by external shocks. The drift of the equation 
        is singular and covers the typical logarithmic 
        Flory-Huggins potential required in phase-separation dynamics.
        We prove well-posedness of the model in a strong 
        probabilistic sense, and analyze its long-time behavior 
        in terms of existence and uniqueness of invariant measures, ergodicity, and mixing properties.
        Eventually, we present an Euler--Maruyama scheme to simulate the model and illustrate how it captures fundamental biological phenomena, 
        such as damage clustering, stress-induced 
        topology perturbations, and damage dynamics.        
	\end{abstract}
	\maketitle
	
\section{Introduction}
The integrity of DNA and cellular tissues is constantly challenged by internal and external stressors.
Understanding how damage arises and how repair proceeds is a central problem in biophysics and medicine. 
Many relevant incidents are neither smooth nor continuous: double-strand breaks, replication errors, chemically induced lesions, or other localized failures can occur abruptly and unpredictably, and may cluster in space. 
At the same time, background variability (metabolic noise, reactive oxygen species, heterogeneous repair activity) acts persistently and on smaller scales. 
A realistic mesoscopic model should therefore combine (i) bistable relaxation between ``healthy'' and ``damaged'' states, (ii) diffusion-like repair cooperation, and (iii) the coexistence of continuous fluctuations with discrete, catastrophic damage hits. The Allen--Cahn equation, originally developed to model phase separation (see \cite{allen1979}), provides a natural framework for describing such bistable systems. Its deterministic dynamics captures the evolution towards one of two stable states (namely, “healthy” or “damaged”) with the diffusion term representing spatial repair interactions. 
Stochastic variants of the Allen–Cahn equation typically 
take into account only
continuous perturbations by Gaussian noise.
However, accurately modeling biological damage requires incorporating also discrete, abrupt shocks. In this work, we introduce and analyze a stochastic Allen--Cahn model with both multiplicative Wiener and jump noise terms.
The model reads
\[
		\begin{cases}
			\d u - \varepsilon \Delta u  \: \d t
            +\Phi'(u)\:\d t = 
            G(u) \: \d W + \displaystyle \int_Z J(u^-,z) \: \overline{\mu}(\d t,\, \d z) & \quad \text{in }  (0,T) \times\OO, \\
			\alpha_d u + \alpha_n \partial_{\bn} u = 0 & \quad \text{on } (0,T) \times \partial \OO, \\
			u(\cdot, 0) = u_0 & \quad \text{in } \OO,
		\end{cases}
\]
    where $u(t,x)$ denotes the local damage level at position $x\in\OO$ and time $t\ge 0$, and 
    the coefficients $\alpha_d,\alpha_n\in\{0,1\}$ with $\alpha_d+\alpha_n=1$ allow either Dirichlet or Neumann homogeneous boundary conditions.
The classical diffuse-interface perspective of the Allen-Cahn 
modeling requires $u$ to be confined in the 
physically-relevant interval $[0,1]$, with $0$ and $1$
representing the so-called pure phases, with the convention
\[
u=0 \quad \text{(healthy)} , 
\qquad 
u=1 \quad \text{(fully damaged)} .
\]
Instead, intermediate values of $u$ 
describe sublethal, potentially repairable states: 
this is consistent with evidence that damage accumulates gradually rather than in an all-or-nothing manner \cite{curtis1986}. 
Mathematically, this is ensured by a suitable choice of the potential $\Phi$, which enforces barriers at the endpoints. 
The typical choice for $\Phi$ required in 
thermodynamics is
the well-know double-well Flory-Huggins potential \cite{Flory42, Huggins41}
\begin{equation}
\label{FH}
    \Phi(u)=\theta\Big(u\ln u+(1-u)\ln(1-u)\Big)
    -\theta_0\left(u-\dfrac 12\right)^2, \qquad u\in[0,1].
\end{equation}
The Allen-Cahn dynamics results then as a gradient flow 
with respect to the $L^2$ metric of the free-energy 
\[
  u\mapsto \frac\eps2\int_\OO|\nabla u(x)|^2\,\d x + 
  \int_\OO\Phi(u(x))\,\d x.
\]
where $\varepsilon>0$ controls the thickness of interfacial layers separating healthy and damaged regions.
From the modeling point of view, the diffuse-interface dynamics
capture recovery and cell-fate transitions, 
diffusion-driven repair cooperation, and constrained bistable dynamics. 
In our analysis, the model
will be allowed to be even more general, in the form 
\begin{equation} \label{eq:strongform}
		\begin{cases}
			\d u - \varepsilon \Delta u  \: \d t
            +\partial\Psi(u)\:\d t +F(u)\:\d t \ni
            G(u) \: \d W + \displaystyle \int_Z J(u^-,z) \: \overline{\mu}(\d t,\, \d z) & \quad \text{in }  (0,T) \times\OO, \\
			\alpha_d u + \alpha_n \partial_{\bn} u = 0 & \quad \text{on } (0,T) \times \partial \OO, \\
			u(\cdot, 0) = u_0 & \quad \text{in } \OO.
		\end{cases}
	\end{equation}
The derivative of the double-well potential
is replaced by the multi-valued sub-differential operator 
$\partial\Psi+F$, where $F$ is a Lipschitz-continuous function
and
$\Psi$ is a proper, convex, lower semicontinuous function on $\mathbb R$
with $\operatorname{Int}D(\Psi)=(0,L)$, and $L\in(0,\+\infty]$.
The case $L<+\infty$ model double-barrier potentials while 
$L=+\infty$ corresponds to a single-barrier in $0$ only.
In particular, this generality allows 
to possibly include non-smooth energies and 
to model additional biochemical regulation, such as time-scale separation in repair pathways or environmental modulation.

The stochastic components encode two biologically distinct perturbations. 
The Wiener noise $G(u)\,\mathrm{d}W$ represents continuous background fluctuations, while
the jump term
$\int_Z J(u^-,z)\,\overline{\mu}(\mathrm{d}t,\mathrm{d}z)$,
driven by a compensated Poisson random measure $\bar \mu$, models abrupt localized damage hits occurring at random times and locations. 
In particular, we allow the 
Wiener noise intensity $G$ and the
jump amplitude $J$ to be state dependent through a pre-factor that vanishes at the potential barriers, reflecting the idea that fully healthy and fully damaged configurations are comparatively insensitive to small additional perturbations, whereas intermediate configurations are the most susceptible. 
By selecting the event statistics and spatial kernels appropriately, this framework can also be specialized to model ionizing radiation tracks as a concrete application, see \cite{cordoni2024spatial,kellerer1984,goodhead1994,hill2020radiation}.

From a mathematical perspective, the main challenge is to deal 
with the possible singularity of the potential $\Phi$ at the 
barriers $0$ and $L$, and to confine the process $u$ 
into the physically-relevant interval $[0,L]$.
The main idea is to suitable tune the noise intensities
in a multiplicative fashion, in such a way that 
the singular energy proliferation at the potential barriers 
is kept under control throughout the evolution.
As far as the Wiener noise is concerned, this 
has been proposed for the stochastic Allen-Cahn equation 
in \cite{Bertacco21, SZ}, and also for coupled
Allen-Cahn-Navier-Stokes systems in \cite{DPGS, DPSZ}.
For the jump contribution, the presence of a singular potential 
is a novel challenge, as the main literature 
on stochastic evolutions equations with jump-type noise
mainly deal with regular nonlinearities \cite{pes-zab, applebaum2009}. The main idea to overcome such problem
is to prescribe a tuning of the jump intensity 
as for the Wiener case: roughly speaking, this allows to 
control the total energy over time and not to ``jump''
outside the relevant domain $(0,L)$.

The first main result of the paper consists of well-posedness 
for \eqref{eq:strongform} in the strong probabilistic sense.
The intuitive idea is that the jump intensity 
close to the potential barriers compensate the 
blow-up of the potential.
In general, such energy-noise balance may not be preserved
under standard approximation techniques.
From the technical point of view, 
the strategy of the proof
combines a Yosida regularization
for the double-well potential with a
ad-hoc counter-correction on the jump noise, in order to 
preserve such energy-noise balance also at the approximate level.
This allows to extend the classical frameworks for SPDEs with locally monotone nonlinearities and jump noise \cite{brzez}.

The second main result of the paper 
concerns existence and uniqueness of invariant measures for the 
associated transition semigroup, 
as well as ergodic and mixing properties.
These are obtained by deducing refined energy estimates 
on the solution in order to cast our framework in
the typical long-time behavior theory for
stochastic evolution equations.

 Eventually, we demonstrate the model’s computational relevance through an implicit–explicit Euler–Maruyama finite element scheme.
 This simulates damage-repair dynamics and visualizes the roles of stochastic fluctuations and radiation tracks in the most interesting case
 of the logarithmic potential \eqref{FH}.
 By tuning the parameters governing noise and jumps, the simulations reproduce characteristic radiobiological responses such as localized damage clustering, partial recovery, and cumulative loss of integrity under sustained irradiation.
 Furthermore, we are also able to visualize 
 an abstract random separation property, that was
 proved for the model without jumps in \cite{BOS21, orr-scar-sep}. This reads, in its general form, 
 \[
\P\left(\sup_{(t,x)\in\mathbb R_+\times\overline\OO}
|u(t,x)|<1\right)=1
\]
and confirms also from a mathematical viewpoint the fact that 
equilibrium dynamics for \eqref{FH} are not 
attained at the pure phases.

In conclusion, the Allen-Cahn equation has been studied in the last decades 
under numerous point of views. For example, 
without aiming for generality, we refer to 
\cite{BGJK, BVZ, DPGS2,
hairer2012triviality, metz, orr-scar}
for contributions on the Allen-Cahn equation with 
Wiener-type noise, \cite{web1, web2}
on its sharp interface limit, 
\cite{NNTM, DNNTM} for coupled systems,
and \cite{BHR, itojump, H}
for stochastic evolution equations with 
jump-type noise, as well as the references therein.
Up to the authors' knowledge, the study of
stochastic models with jump-noise and 
singular potentials has been an open issue 
in the last years. This paper provides a 
first rigorous analytical treatment
in a biophysically-motivated setting.

\textbf{Plan of the paper.} \andrea{The contents of the work are organized as follows.} Section \ref{sec:preliminaries} is devoted to illustrating the preliminary material, notation, and the \andrea{necessary notions} to state the main results. Section \ref{sec:wellposed} contains the proof of existence of probabilistically-strong solutions to the system. Section \ref{sec:invariant} contains the investigation of the longtime behavior of the system, in terms of existence and uniqueness of invariant measures. \andrea{Section \ref{sec:numerics} delves into a numerical investigation of the system.} Finally, a collection of useful technical estimates are presented in Appendix \ref{App_A}.
    
\section{Preliminaries and main results} \label{sec:preliminaries}	\subsection{Mathematical preliminaries and notation}
In the following, we illustrate the notation employed throughout the whole work.
\subsubsection{Stochastic framework.}
Let $T > 0$ be an arbitrary final time, and let us set $(\Omega,\cF,(\cF_t)_{t\in[0,T]},\P)$ to be a filtered probability space satisfying the usual conditions, i.e., the filtration $(\cF_t)_{t\in[0,T]}$ is saturated and right-continuous. Let $\cP$ denote the progressive $\sigma$-algebra on $\Omega \times [0,T]$, accordingly. For every measurable space $(E,\, \mathscr E)$,
we denote by $\mathcal M(E)$ the set of positive measures on $(E, \mathscr E)$.
Oftentimes, the space $E$ is also equipped with the structure of Banach space: in these cases, we imply that the underlying $\sigma$-algebra $\mathscr E$ is the Borel $\sigma$-algebra $\mathscr B(E)$. Moreover, for any positive real quantity $p \geq 1$ the symbol $L^p(\Omega; E)$ denotes the set of strongly measurable $E$-valued random variables on $\Omega$ with finite moments up to order $p$. When the space $E$ is allowed to depend on time, as in classical Bochner spaces, we may denote the resulting space with $L^p_\cP(\Omega; E)$	to stress that measurability is intended with respect to the progressive 	$\sigma$-algebra $\cP$.	
\\
Let us now point out the precise interpretation of the stochastic perturbations appearing in \eqref{eq:strongform}. Concerning It\^{o} noise, we denote by $W$ a cylindrical Wiener process on some fixed separable Hilbert space $U$. Accordingly, we set $\{e_k\}_{k\in\enne} \subset U$ to be a fixed arbitrary orthonormal system for $U$. Let us recall that by the properties of cylindrical Wiener processes, the process $W$ admits the representation
		\begin{equation} \label{eq:representation}
			W = \sum_{k=0}^{+\infty} \beta_k e_k,
		\end{equation}
		where $\{\beta_k\}_{k \in \enne}$ is a family of real and independent Brownian motions. However, the series \eqref{eq:representation} can not be expected to always converge in $U$. A well known result establishes that, in general, convergence can be achieved in some larger Hilbert space $U_0$ such that $U \embed U_0$ with Hilbert--Schmidt embedding $\iota$. Moreover, it is possible to identify $W$ as a $Q^0$-Wiener process on $U_0$, for the trace-class operator $Q^0 := \iota \circ \iota^*$ on $U_0$ (see \cite[Subsection 2.5.1]{LiuRo}). In the following, we implicitly assume this extension by simply saying that $W$ is a cylindrical process on $U$. This holds also for stochastic integration à la It\^{o} with respect to $W$. Indeed, the symbol 
		\[
		\int_0^\cdot B(s)\,\d W(s) := \int_0^\cdot B(s) \circ \iota^{-1}\,\d W(s),
		\]
		for every process $B \in L^2_\cP(\Omega;L^2(0,T;\cL^2(U,K)))$, where $K$ is any real Hilbert space and $\cL^2(U,K)$
        is the space of Hilbert--Schmidt operators from $U$ to $K$ (a precise definition is given later on). It is well known that such a  definition is well posed and does not depend on the choice of $U_0$ or $\iota$ (see \cite[Subsection 2.5.2]{LiuRo}). 
        \\
        Let us now move to the jump noise term:
        \luca{we refer here to the classical theory \cite{jacod}.
        Let $(Z,\mathscr{Z})$ be a Blackwell space
        (e.g.~a Polish space with its Borel $\sigma$-algebra).
        A random measure is a map $\mu:\Omega\to\mathcal M(
        [0,+\infty)\times Z, \cB([0,+\infty))\otimes\cZ)$ such that
        $\mu(\omega, \{0\}\times Z)=0$ for all $\omega\in\Omega$. 
        A random measure $\mu$ is said to be integer-valued
        if:
        \begin{enumerate}
            \item $\mu(\omega, \{t\}\times Z)\leq1$ for every $\omega\in\Omega$ and $t\geq0$;
            \item $\mu(\omega, A)\in\mathbb N\cup\{+\infty\}$
            for every $\omega\in\Omega$ and $A\in\cB([0,+\infty))\otimes\cZ$;
            \item $\mu$ is optional and $\cB([0,+\infty))\otimes\cZ$-$\sigma$-finite.
        \end{enumerate}
        Furthermore, given a
        $\sigma$-finite positive measure $\nu$ on $(Z,\cZ)$,
        we say that $\mu$ is a time-homogeneous
        Poisson random measure with intensity $\nu$ if
        $\mu$ is an integer-valued random measure such that:
        \begin{enumerate}
            \item $\mu(\cdot, A)$ is a Poisson random 
            variable with rate $(\lambda\otimes\nu)(A)$
            for every $A\in \cB([0,+\infty))\otimes\cZ$,
            where $\lambda$ is the Lebesgue measure on 
            $[0,+\infty)$;
            \item $\{\mu(\cdot,A_i)\}_{i=1}^n$
            are independent for every disjoint $A_1,\ldots,A_n\in
            \cB([0,+\infty)\otimes\cZ$.
        \end{enumerate}
        The measure $\lambda\otimes\nu$ on $([0,+\infty)\times Z, \cB([0,+\infty)\otimes\cZ)$ 
        is called compensator of $\mu$, and $\bar \mu:=\mu-\lambda\otimes\nu$ is called compensated Poisson measure.
        The qualifier ``time-homogeneous'' underlines the fact that the compensator is precisely the product measure $\lambda \otimes \nu$. In general, this may not be the case.
        For every Hilbert space $K$, for every $T>0$, and for every 
        $f:\Omega\times[0,T]\times Z\to K$ that is 
        $\cP\otimes\cZ/\cB(K)$-measurable
        and with 
        $f\in L^1(\Omega; L^1(0,T; L^1(Z,\nu; K)))$, the stochastic process 
        \[
        (f\cdot\bar\mu)(t):=
        \int_{(0,t]}\int_Zf(s,z)\bar\mu(\d s, \d z), \quad t\in(0,T],
        \]
        is a well-defined $K$-valued martingale.
        Moreover, if also $f\in L^2(\Omega; L^2(0,T; L^2(Z,\nu; K)))$, then $f\cdot\bar\mu$ is a square-integrable
        martingale with predictable quadratic variation given by 
        \[
          \langle f\cdot\bar\mu\rangle(t)=\int_0^t
          \int_Z\|f(s,z)\|_K^2\,\nu(\d z)\,\d s, \quad t\in(0,T].
        \]
        The concept of Poisson random measure arises somewhat naturally in the theory of Poisson point processes. For convenience, we briefly recall the main ideas. Given a countable and locally finite subset $S \subset (0,+\infty)$, a point function on $(Z, \mathscr Z)$ is any mapping $p: S \to Z$. The counting measure associated to the point function $p$ is the measure $\mu_p$ on $((0,+\infty) \times Z,\mathscr B((0,+\infty)) \otimes \mathscr Z)$ defined as
        \[
        \mu_p((0,t] \times A) := \#\left\{s \in S \cap (0,t]: p(s) \in A\right\}, \quad t>0, \quad A\in \cZ.
        \]
         This measure counts how many events within $A$ have happened at most at time $t$. A point process is then a random variable with values in the set of point functions on $Z$, equipped with a suitable $\sigma$-algebra in such a way that the maps $p \mapsto \mu_p((0,t] \times A)$ are measurable for all $t > 0$ and $A \in \mathscr Z$. Then, given a point process $\Pi$, it may happen that the family of counting measures associated to $\Pi$ is a time-homogeneous Poisson random measure on $((0,+\infty) \times Z, \mathscr B((0,+\infty) \times \mathscr Z)$. In this case, we say that $\Pi$ is a time-homogeneous Poisson point process.
        }

    \subsubsection{Analytical framework.} Given any Banach space $E$, we denote with the symbol $\boldsymbol E$ the product space $E^d$ (or even $E^{d\times d}$, if no ambiguities arise), and by $E^*$ its topological dual. The corresponding duality pairing is denoted by $\ip{\cdot}{\cdot}_{E^*,E}$. Whenever $E$ is also a Hilbert space, then the scalar product of $E$ is denoted by $(\cdot,\cdot)_E$. For any pair of Banach spaces $E_1$ and $E_2$ and any extended real number $s \in [1,+\infty]$, the symbol $L^s(E_1; E_2)$ indicates the usual spaces of strongly measurable, Bochner-integrable functions
	defined on the Banach space $E_1$ and with values in the Banach spaces $E_2$. If $E_2$ is omitted, it is understood that $E_2 = \mathbb{R}$.
	If $E_1$ and $E_2$ are also Hilbert spaces, then we are able to define the space of Hilbert-Schmidt operators
	from $E_1$ to $E_2$ as $\cL^2(E_1,\,E_2)$, equipped with its usual norm structure given by $\norm{\,\cdot\,}_{\cL^2(E_1,\,E_2)}$.
	Within a smooth and bounded domain $\OO \subset \mathbb R^d$, with $d \in \{1,2,3\}$, we denote by $W^{s,p}(\OO)$ the classical Sobolev spaces of order $s \in \mathbb R$ and $p \in [1,+\infty]$, and we denote by $\norm{\,\cdot\,}_{W^{s,p}(\OO))}$ their classical norms (with the usual understanding that $W^{0,p}(\OO) = L^p(\OO)$). Accordingly, we define the Hilbert space $H^s(\OO):=W^{s,2}(\OO)$ for all $s\in\erre$,
	endowed with its canonical norm $\norm{\,\cdot\,}_{H^s(\OO)}$. In the following, it will be useful to introduce a specific notation for a variational structure, i.e., we set
	\[
	H:=L^2(\OO)\,, \qquad V:=
    \begin{cases}
        H^1_0(\OO) \quad&\text{if } (\alpha_d,\alpha_n)=(1,0),\\
        H^1(\OO) \quad&\text{if } (\alpha_d,\alpha_n)=(0,1),
    \end{cases} 
	\]
	endowed with their standard norms $\norm{\cdot}_H$ and
	$\norm{\cdot}_{V}$, respectively. As usual, we identify the Hilbert space $H$ with its dual through
	the corresponding Riesz isomorphism, so that we have the variational structure
	\[
	V\embed H \embed V^*,
	\]
	with dense and compact embeddings (both in the cases $d = 2$ and $d = 3$). In the Dirichlet case (i.e.~$(\alpha_d,\alpha_n)=(1,0)$), the zero-trace space
	\[
	H^1_0(\OO) = \{u \in H^1(\OO):u= 0 \text{ almost everywhere on } \partial \Omega\}
	\]
	is intended to be endowed with the $H^1$-seminorm structure, owing to the Poincaré inequality, namely 
	\[
	( u,\,v)_{H^1_0(\OO)} := (\nabla u, \,\nabla v)_{ \bH}, \qquad \|u\|_{H^1_0(\OO)} := \|\nabla u\|_ \bH, \qquad u,\,v \in V.
	\]
     The weak realization of the negative Dirichlet-Laplace operator, i.e., the operator
    \begin{align*}
        \mathcal{A}: H^1_0(\OO)\rightarrow H^{-1}(\OO),\quad \langle\mathcal{A}u,v\rangle_{H^{-1}(\OO), H^1_0(\OO)} := (\nabla u, \,\nabla v)_{ \bH}\quad \text{ for all }v\in H^1_0(\OO)
    \end{align*}
    is a linear isomorphism by the Riesz representation theorem. The spectral properties of $\mathcal A$ will be later useful. We denote by $\{\rho_n\}_{n \in \mathbb{N}}$ the countable sequence of eigenvalues of $\mathcal A$. In particular, it is an increasing sequence such that
\[
\lim_{n \rightarrow +\infty}\rho_n=+ \infty.
\]
Finally, we recall that $\rho_1^{-\frac 12}$ is the optimal constant in Poincar\'e's inequality 
\begin{equation}
\label{Poin}
\left\| u \right\|_H \leq \rho_1^{-\frac 12} \left\| u \right\|_{H^1_0(\OO)}, 
\end{equation}
that holds for all $v\in H^1_0(\OO)$.  In all the present work, we reserve the symbols $C$ and $K$ (eventually indexed) for constants depending on some or all the structural parameters of the problem. If the dependencies of such constants are relevant, they will be explicitly pointed out. The values of these constants may change within the same argument without relabeling. That being said, throughout this work, we shall make use of several different embedding theorems. As sometimes it shall be relevant to explicitly track down the value of some constants, for the sake of clarity, we precise some further notation. 
\begin{enumerate}[(i)] \itemsep0.3em
    \item The embedding $V \embed L^q(\OO)$ for all finite $q  \ge 1$ \andrea{in two dimensions and for $q \in [1,6]$ in three dimensions}, implies that
    \[
    \|u\|_{L^q(\OO)} \leq K_q \|u\|_{V}
    \]
    for all $u \in V$.
 \item The embedding $ L^\infty(\OO)\embed H$ implies that
    \[
    \|u\|_H \leq \andrea{|\OO|^\frac 12}\|u\|_{L^\infty(\OO)}
    \]
    for all $u \in L^\infty(\OO)$.
\end{enumerate}
    \subsection{Assumptions} The following assumptions are in order throughout the present work.
	\begin{enumerate}[label = \textbf{(A\arabic*)}] \itemsep0.5em
    \item \label{hyp:structural} The spatial domain $\OO$ is a bounded smooth domain in $\mathbb R^\andrea{d}$ \andrea{with $d \in \{2,3\}$}. The parameter $\varepsilon$ is strictly positive, and we set either $\alpha_d = 0$ and $\alpha_n = 1$ (\andrea{homogeneous} Neumann boundary conditions) or $\alpha_n = 0$ and $\alpha_d = 1$ (\andrea{homogeneous} Dirichlet boundary conditions).
	\item \label{hyp:psi} The function $\Psi:\mathbb R\to [0,+\infty]$ is convex, lower semicontinuous and
        $\operatorname{Int}(D(\Psi))=(0,L)$, where $L\in(0,+\infty]$.
        Moreover, the restriction $\Psi_{|(0,L)}\in C^2(0,L)$ and its subdifferential is single-valued, i.e., 
        \[
        \partial\Psi(s)=\{\Psi'(s)\} \qquad\forall\,s\in(0,L),
        \]
        \luca{
        and there exist constants $C_0,C_1>0$ such that 
        \[
        \Psi'(s)s\geq C_0s^2-C_1\quad\forall\,r\in(0,L).
        \]
        }
        Furthermore, the second derivative $\Psi''$ is convex and we assume that 
        \[
        \Psi''(0):=\lim_{s\to0^+}\Psi''(s)\in[0,+\infty],
        \]
        and, if $L < +\infty$,
        \[
        \Psi''(L):=\lim_{s\to L^-}\Psi''(s)\in[0,+\infty].
        \]
        \item \label{hyp:F} The function $F:\mathbb R\to\mathbb R$ is $\sqrt {C_F}$-Lipschitz-continuous for some $C_F>0$.
		\item \label{hyp:G} Setting $\Gamma:=\{v\in L^\infty(\OO):\: 0\leq v\leq L \text{ a.e.~in } \OO\}$ if $L < +\infty$ and $\Gamma:= \{v \in H: v \geq 0 \text{ a.e. in } \OO\}$ if $L=+\infty$,
        the function \luca{$G: \Gamma 
        \to \cL^2(U,H)$}
        satisfies: 
        \begin{enumerate}[(i)]
            \item  \label{hyp:Glip} there exists a constant $C_G > 0$ such that
            \[
            \|G(v_1)-G(v_2)\|_{\cL^2(U,H)}^2  \leq
           C_G\|v_1-v_2\|_H^2 \quad\forall\,v_1,v_2\in \Gamma;
            \]
            \item \label{hyp:Gbound} there exists a constant, which without loss of generality we set equal to $C_G$, such that
            \begin{alignat*}{2}
            \sum_{k=1}^{+\infty}\|\sqrt{\Psi''(v)}G(v)[e_k]\|_H^2
            & \leq \luca{
            C_G(1 + \|\Psi(v)\|_{L^1(\OO)})}
            \quad&&\forall\,v\in \Gamma,
            \end{alignat*}
            with the understanding that $\infty\cdot0:=0$.
        \end{enumerate}
        \item \label{hyp:mu} 
        Let $(Z, \mathscr Z)$ be a Blackwell space, 
        $\nu$ is a $\sigma$-finite positive measure
        on $(Z, \mathscr Z)$, and
        $\mu$ is a time-homogeneous Poisson random measure
        with intensity $\nu$.
		\item \label{hyp:J} The function 
        \luca{$J: \Gamma  \times Z \to H$} 
        satisfies:
        \begin{enumerate}[(i)]
            \item \label{hyp:Jlip} there exists a constant $C_J > 0$ such that
            \[
            \int_Z\|J(v_1,z)-J(v_2,z)\|_H^2\,\nu(\d z)  \leq
            C_J\|v_1-v_2\|_H^2 \quad\forall\,v_1,v_2\in  \Gamma;
            \]
            \item \label{hyp:delta} there exists a constant $\delta_J\in (0,1]$ such that
            \[
            \delta_J v \leq v+ J(v,z) \leq (1-\delta_J)L+\delta_J v
            \quad\text{a.e.~in } \OO, 
            \quad\forall\,v\in \Gamma,\quad\forall\,z\in Z
            \]
            if $L < +\infty$ and such that
            \[
            \delta_J v \leq v+ J(v,z) \leq C_J(1+v)
            \quad\text{a.e.~in } \OO, 
            \quad\forall\,v\in \Gamma,\quad\forall\,z\in Z
            \]
            if $L = +\infty$;
            \item \label{hyp:Jbound} there exists a constant, which without loss of generality we set equal to $C_J$, such that
            {\small
            \begin{alignat*}{2}
            \int_Z 
            \|\sqrt{\Psi''(\delta_Jv)}J(v,z)\|_H^2\:  \nu(\d z)
            & \leq C_J(1 + \|\Psi(v)\|_{L^1(\OO)})
            \quad&& \forall\,v\in \Gamma,\\
            \int_Z 
            \|\sqrt{\Psi''((1-\delta_J)L + \delta_Jv)}J(v,z)\|_H^2\:  \nu(\d z)
            & \leq C_J(1 + \|\Psi(v)\|_{L^1(\OO)})
            \quad&& \forall\,v\in \Gamma, \text{ if }L<+\infty, \\
            \int_Z 
            \|\sqrt{\Psi''(C_J(1+v))}J(v,z)\|_H^2\:  \nu(\d z)
            & \leq C_J(1 + \|\Psi(v)\|_{L^1(\OO)})
            \quad&& \forall\,v\in \Gamma, \text{ if }L=+\infty,
            \end{alignat*}}with the understanding that $\infty\cdot0:=0$.
        \end{enumerate}
	\end{enumerate}
    
\begin{remark}
    Let us point out some observations about the previous assumptions. Assumptions \ref{hyp:psi} and \ref{hyp:F} are classical hypotheses in the investigation of diffuse interface models. They encompass a wide class of potential by introducing a convex-concave splitting. The convex part, in particular, may exhibit singularities at the endpoints of $(0,L)$, allowing for thermodynamical potentials like the Flory--Huggins one (see \cite{Flory42, Huggins41}). The regular perturbation $F$ is most significant when $F$ is concave (otherwise it could be absorbed in $\Psi$). The competition of convex and concave potentials results in typical double-well profiles that are responsible for bistable dynamics. Assumption \ref{hyp:G} includes properties that are not new in the context of singular diffuse interface models (see, for instance, \cite{scarpa21, DPGS, DPGS2} for further insights). Finally, in the same spirit, Assumption \ref{hyp:J} ensures the correct compatibility conditions to grant existence of solutions. In particular, Assumption \ref{hyp:J}-\ref{hyp:Jbound} ensures that, after a jump, the variable does not exit the physically relevant range $(0,L)$ (in the case $L =+\infty$, it also prescribes a growth condition, in order not to have unnaturally ``large'' jumps).
\end{remark}
\begin{remark}
\label{rem:linear_growth}
    As a consequence of the Lipschitz continuity properties given in Assumptions \luca{\ref{hyp:F},
    \ref{hyp:G}-\ref{hyp:Glip} and \ref{hyp:J}-\ref{hyp:Jlip}, we infer the existence of positive constants 
    $\widetilde C_F$,
    $\widetilde{C}_G$ and $\widetilde{C}_J$ such that 
    \[
      |F(s)|^2\leq \widetilde C_F(1+|s|^2) \quad\forall\,s\in\mathbb R,
    \]
    }
    \[
        \|G(v)\|_{\cL^2(U,H)}^2  \leq
           \widetilde{C}_G\left(1 +\|v\|_H^2\right) \quad\forall\,v\in  \Gamma
            \]
    and 
    \[
    \int_Z\|J(v,z)\|_H^2\,\nu(\d z)  \leq
            \widetilde{C}_J\left( 1 +\|v\|_H^2\right) \quad\forall\,v\in \Gamma.
    \]
\end{remark}

\begin{remark}
\label{rem:Phi}
\andrea{It will turn out to be useful to introduce the function}
    \[
\Phi: \mathbb R \to \luca{(-\infty,+\infty]}, \qquad \Phi(r) = \Psi(r) + \int_0^r F(s) \: \d s, \qquad r \in \mathbb R.
\]
By possibly renominating the positive constants $C_0$ and $C_1$, it is not restrictive to assume that 
\[ 
\Phi'(r)r \ge C_0r^2-C_1 \quad\forall\,r\in(0,L).
\]
Since $\Phi$ is bounded from below, we may replace $\Phi$ by $\Phi + C$ for a suitable constant $C>0$ and assume, without loss of generality, that $\Phi \ge 0$.
\end{remark}
    \subsection{Main results} \andrea{
    In this subsection, we state the main results of this work. The first is a well-posedness result, also precising a notion of solution for problem \eqref{eq:strongform}.
    \begin{thm} \label{thm:wellposed}
        Let Assumption \ref{hyp:structural}-\ref{hyp:J} hold. Fix $p \geq 2$ and let $r := \min\{4,p\}$. Let further $u_0 \in L^p(\Omega, \cF_0; H)$ be such that 
        \[
        \Phi(u_0) \in L^\frac r2(\Omega, \cF_0; L^1(\OO)).
        \]
        Then, there exists a unique progressively measurable $H$-valued càdlàg process $u$ such that
        \begin{enumerate}[(i)]
            \item $u \in L^r(\Omega;L^\infty(0,T;H) \cap L^2(0,T;V))$;
            \item $u(\omega, t,x) \in (0,L)$ $\P \otimes \d t
            \otimes \d x$-almost surely;
            \item $\Psi(u) \in L^\frac r2(\Omega;L^\infty(0,T;L^1(\OO))$;
            \item $\Psi'(u) \in L^r(\Omega; L^2(0,T;H))$;
            \item $u(0) = u_0$;
            \item for every $v \in V$, it holds
            \begin{multline*}
                (u(t),\, v)_H + \eps\int_0^t\int_\OO \nabla u(s) \cdot \nabla v \: \d x \: \d s + \int_0^t\int_\OO (\Psi'(u(s))+F(u(s)))\,v \: \d x \: \d s \\
                = (u_0, v)_H + \left( \int_0^t G(u(s)) \: \d W(s),\, v\right)_H + \left( \int_{(0,t]} \int_Z J(u(s^-), z) \: \overline \mu(\d s, \d z),\, v\right)_H
            \end{multline*}
            for all $t \geq 0$, $\P$-almost surely.
        \end{enumerate}
        Moreover, given two initial conditions $u_{01}$ and $u_{02}$ satisfying the same assumptions listed above, letting $u_1$ and $u_2$ denote the corresponding solutions, the following continuous dependence estimate
        holds:
        \[
        \|u_1-u_2\|_{L^r(\Omega;L^\infty(0,T;H) \cap L^2(0,T;V))} \leq C\|u_{01}-u_{02}\|_{L^r(\Omega;H)}.
        \]
    \end{thm}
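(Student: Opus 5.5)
We plan to argue by approximation, following the strategy announced in the introduction: regularize the singular part of the drift by Moreau--Yosida, correct the noise coefficients so that the energy--noise balance survives at the approximate level, derive uniform estimates, and pass to the limit. For $\lambda>0$ we let $\Psi_\lambda$ be the Moreau--Yosida regularization of $\Psi$ and $\Psi'_\lambda$ its derivative, the Yosida approximation of $\partial\Psi$. Then $\Psi'_\lambda$ is monotone and $1/\lambda$-Lipschitz on all of $\mathbb R$, and $\Psi_\lambda\le\Psi$.

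Since $G$ and $J$ are only defined on $\Gamma$, we extend them to $H$ by composing with the pointwise projection $\pi$ onto $[0,L]$, which is $1$-Lipschitz on $H$. For the jump term we further insert a counter-correction: we replace $J(\pi(v),z)$ by $J_\lambda(v,z)$, chosen so that $v+J_\lambda(v,z)$ is a contraction toward the interval $[\delta_J\pi(v),(1-\delta_J)L+\delta_J\pi(v)]$ dictated by \ref{hyp:J}-\ref{hyp:delta}. A concrete choice is $J_\lambda(v,z):=\pi(v)-v+J(\pi(v),z)$, so that the post-jump state equals $\pi(v)+J(\pi(v),z)$, possibly mollified in $\lambda$ to keep the Lipschitz bound. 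With these choices the approximating equation has a Lipschitz, monotone-plus-Lipschitz drift and Lipschitz coefficients $G(\pi\cdot)$ and $J_\lambda$. Existence and uniqueness of a càdlàg variational solution $u_\lambda\in L^p(\Omega;L^\infty(0,T;H))\cap L^2(\Omega;L^2(0,T;V))$ then follows from the classical theory of SPDEs with locally monotone coefficients and Lévy noise \cite{brzez}.

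The uniform estimates come in two layers.
\begin{enumerate}[(a)]
\item We apply Itô's formula for $\|u_\lambda\|_H^2$ and its powers up to $r/2$, using Remark~\ref{rem:linear_growth} and the coercivity in Remark~\ref{rem:Phi}. Combined with the Burkholder--Davis--Gundy inequality (with its jump version, whose quadratic variation carries $\sum\|\Delta u\|^2$), this yields a bound on $u_\lambda$ in $L^r(\Omega;L^\infty(0,T;H)\cap L^2(0,T;V))$ uniform in $\lambda$.
\item We apply Itô's formula for $\int_\OO\Psi_\lambda(u_\lambda)$. The Wiener correction is $\frac12\sum_k\int\Psi''_\lambda(u_\lambda)|G[e_k]|^2$. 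The jump correction is
\[
\int_Z\int_\OO\bigl[\Psi_\lambda(u+J)-\Psi_\lambda(u)-\Psi'_\lambda(u)J\bigr],
\]
which by Taylor's formula equals $\int_0^1(1-\theta)\Psi''_\lambda(u+\theta J)|J|^2\,\d\theta$. Since $u+\theta J$ lies between $u$ and $u+J$, it stays in $[\delta_J u,(1-\delta_J)L+\delta_J u]$. The convexity of $\Psi''$ then bounds $\Psi''$ on this segment by the maximum of its values at the endpoints $\delta_Ju$ and $(1-\delta_J)L+\delta_Ju$, and the same comparison must be checked for $\Psi''_\lambda\le\Psi''(J_\lambda\text{-adjusted point})$. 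This is exactly where \ref{hyp:J}-\ref{hyp:Jbound} and \ref{hyp:G}-\ref{hyp:Gbound} enter, giving the bound $C(1+\|\Psi(u)\|_{L^1})$. Gronwall's lemma and the Burkholder--Davis--Gundy inequality then give $\Psi_\lambda(u_\lambda)$ bounded in $L^{r/2}(\Omega;L^\infty(0,T;L^1))$.
\end{enumerate}
A further Itô computation, testing by $\Psi'_\lambda(u_\lambda)$ (or using monotonicity $\Psi'_\lambda(u)(-\Delta u)\ge0$ in $L^2$), yields $\Psi'_\lambda(u_\lambda)$ bounded in $L^r(\Omega;L^2(0,T;H))$.

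We expect the main obstacle to lie in step (b) at the approximate level. $\Psi''_\lambda$ is not the same as $\Psi''$, and $\Psi''_\lambda(x)$ for $x$ outside $[0,L]$ must be compared with $\Psi''$ at the resolvent point $(I+\lambda\partial\Psi)^{-1}x$. Our plan is to use $\Psi''_\lambda(x)=\Psi''(J_\lambda x)/(1+\lambda\Psi''(J_\lambda x))\le\Psi''(J_\lambda x)$, and to design $J_\lambda$ so that the post-jump states and their resolvents remain in the region controlled by \ref{hyp:J}-\ref{hyp:Jbound}. If this fails, we will instead bound the correction by the value at the pre-jump point, invoking convexity of $\Psi''$ together with the projection.

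To pass to the limit, we use a Cauchy argument in $\lambda$ rather than compactness. We take the difference $u_\lambda-u_\mu$, apply Itô's formula for $\|u_\lambda-u_\mu\|_H^2$, and use the standard Yosida inequality
\[
(\Psi'_\lambda(a)-\Psi'_\mu(b))(a-b)\ge-C(\lambda+\mu)\bigl(|\Psi'_\lambda(a)|^2+|\Psi'_\mu(b)|^2\bigr),
\]
together with the Lipschitz bounds on $G$ and $J$. By the $L^2$ bounds on $\Psi'_\lambda(u_\lambda)$, $\{u_\lambda\}$ is Cauchy in $L^2(\Omega;L^\infty(0,T;H)\cap L^2(0,T;V))$, and hence in $L^r$ of the same space by interpolation with the uniform bounds. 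A subtlety here is that $J_\lambda\ne J$ off $\Gamma$: the correction $\pi(v)-v$ must vanish in the limit, which it does since $\|u_\lambda-\pi(u_\lambda)\|\to0$ by the $\Psi_\lambda$ bound. Weak limits give $\Psi'_\lambda(u_\lambda)\rightharpoonup\xi$, and maximal monotonicity identifies $\xi=\Psi'(u)$, which also forces $u\in D(\partial\Psi)$, that is, $u\in(0,L)$ almost everywhere. Fatou's lemma yields (iii), and the stochastic integrals converge by the Itô isometries. Finally, continuous dependence and uniqueness follow from Itô's formula for $\|u_1-u_2\|_H^2$, using monotonicity of $\Psi'$, the Lipschitz properties of $F$, $G$, $J$, the Burkholder--Davis--Gundy inequality at power $r/2$, and Gronwall's lemma.
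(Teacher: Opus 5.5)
There is a genuine gap in step (b), and it comes from your choice of extension off $\Gamma$. Because you evaluate $G$ and $J$ at the projection $\pi(u_\lambda)$, the bounds \ref{hyp:G}-\ref{hyp:Gbound} and \ref{hyp:J}-\ref{hyp:Jbound} can only be invoked at $v=\pi(u_\lambda)$. They then put $C(1+\|\Psi(\pi(u_\lambda))\|_{L^1(\OO)})$ on the right-hand side, and this is not controlled by the It\^o formula for $\int_\OO\Psi_\lambda(u_\lambda)$.

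\begin{itemize}
\item Since $\Psi_\lambda\le\Psi$, on $\{0\le u_\lambda\le L\}$ you have $\Psi(\pi(u_\lambda))=\Psi(u_\lambda)\ge\Psi_\lambda(u_\lambda)$. This is the wrong direction for Gronwall.
\item If $\Psi(0)=+\infty$ (e.g.\ $-\ln s-\ln(L-s)$, admissible under \ref{hyp:psi}), the bound is $+\infty$ on $\{u_\lambda\le 0\}$. That set need not be negligible for the regularized problem, so there the hypotheses give no information at all.
\end{itemize}

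For a potential bounded on $[0,L]$, such as Flory--Huggins, this term is harmless. However, Theorem \ref{thm:wellposed} covers unbounded barriers, and superquadratic $\Psi$ when $L=+\infty$, where it is not.

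The second-order terms are also misaligned. The It\^o corrections carry $\Psi''_\lambda(u_\lambda)$, and $\Psi''_\lambda$ along the segment from $u_\lambda$ to the post-jump state. The hypotheses instead involve $\Psi''$ at $\delta_J\pi(u_\lambda)$ and $(1-\delta_J)L+\delta_J\pi(u_\lambda)$. The only general comparison is $\Psi''_\lambda\le\Psi''\circ R_\lambda$, and $R_\lambda(u_\lambda)$ need not lie in $[\delta_J\pi(u_\lambda),(1-\delta_J)L+\delta_J\pi(u_\lambda)]$. Your claim that $u+\theta J$ stays in $[\delta_J u,(1-\delta_J)L+\delta_J u]$ is also false at the approximate level, because $u_\lambda$ may lie outside $[0,L]$. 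Your fallback of bounding by the pre-jump value does not remove the $\|\Psi(\pi(u_\lambda))\|_{L^1(\OO)}$ problem.

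The missing idea, which is what the paper does, is to compose with the resolvent $R_\lambda=(I+\lambda\Psi')^{-1}$ instead of $\pi$. Set $G_\lambda=G\circ R_\lambda$ and $J_\lambda(v,z)=J(R_\lambda(v),z)-\lambda\Psi'_\lambda(v)$. Then
\[
v+J_\lambda(v,z)=R_\lambda(v)+J(R_\lambda(v),z)\in[\delta_J R_\lambda(v),(1-\delta_J)L+\delta_J R_\lambda(v)],
\]
an interval that also contains $R_\lambda(v)$. Everything is now aligned at the single point $R_\lambda(v)\in\Gamma$:
\begin{itemize}
\item $\Psi''_\lambda(v)=\Psi''(R_\lambda(v))R_\lambda'(v)\le\Psi''(R_\lambda(v))$.
\item Convexity of $\Psi''$ reduces everything to the two endpoints appearing in \ref{hyp:J}-\ref{hyp:Jbound}.
\item The hypotheses return $\|\Psi(R_\lambda(v))\|_{L^1(\OO)}\le\|\Psi_\lambda(v)\|_{L^1(\OO)}$, by $\Psi_\lambda=\Psi\circ R_\lambda+\frac{1}{2\lambda}|I-R_\lambda|^2$, and this closes Gronwall.
\item The residual $-\lambda\Psi'_\lambda(v)$ only produces $\lambda\|\Psi'_\lambda(u_\lambda)\|_H^2$, which is absorbed into the dissipation for small $\lambda$.
\end{itemize}

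A minor point: being Cauchy in $L^2(\Omega;\cdot)$ with uniform $L^r(\Omega;\cdot)$ bounds gives convergence only in $L^q$ for $q<r$, not in $L^r$ by interpolation. This is harmless, since Fatou's lemma already yields (i).
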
 \noindent
    The second result, instead, establishes existence, uniqueness and main properties of the invariant measure of the system.}
    \begin{thm} \label{thm:invariant}
Let Assumptions \ref{hyp:structural}-\ref{hyp:J} hold. \andrea{If $L=+\infty$, let further \[\min(\varepsilon,C_0)>K_2\left(\frac{\widetilde{C}_G}{2}+ \widetilde{C}_J \right).\]}Then, there exists at least 
one ergodic invariant measure supported on \andrea{the set}
\[
\mathcal{A}_{\mathrm{str}} := \left\{v \in V: \Psi(v) \in L^1(\OO), \,\Psi'(v) \in H  \right\}.
\]
Moreover, if Assumption \ref{hyp:structural} holds \andrea{with} $\alpha_n = 0$ and $\alpha_d = 1$ (Dirichlet boundary conditions) and 
\[
\varepsilon \rho_1> \frac{C_G}{2} + C_J+ \sqrt{C_F} \qquad  \text{if} \ L<+\infty
\]
or
\[
\min\left(\varepsilon \rho_1,\varepsilon,C_0\right)>\max\left(K_2\left(\frac{\widetilde{C}_G}{2}+ \widetilde{C}_J \right), \frac{C_G}{2}+C_J+\sqrt{C_F}\right) \qquad \text{if} \ L=+ \infty,
\]
then, the invariant measure is unique, \andrea{ergodic}, strongly and exponential\andrea{ly} mixing.
    \end{thm}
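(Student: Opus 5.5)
The plan is the Krylov--Bogoliubov method for existence and a contraction estimate for uniqueness and mixing. Let $P_t$ denote the transition semigroup on $\mathcal B_b(H)$ induced by Theorem~\ref{thm:wellposed}, with $P_t\varphi(u_0):=\E\varphi(u(t;u_0))$ for deterministic $u_0$ with $\Phi(u_0)\in L^1(\OO)$. The Markov and Feller properties follow from the continuous dependence estimate in Theorem~\ref{thm:wellposed}, uniqueness of solutions, and time-homogeneity of $W$ and $\mu$. Fix $u_0=\frac L2$ (or $u_0=1$ if $L=+\infty$), so that $\Phi(u_0)\in L^1(\OO)$.

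\textbf{Existence.} The core step is a uniform-in-time estimate on the time averages
\[
\frac1t\int_0^t \E\left(\|u(s)\|_V^2+\|\Psi(u(s))\|_{L^1(\OO)}+\|\Psi'(u(s))\|_H^2\right)\d s\le C ,
\]
with $C$ independent of $t$. To get it, apply It\^o's formula (for jump processes) to $\|u\|_H^2$ and to the free energy $\frac\eps2\|\nabla u\|_\bH^2+\int_\OO\Phi(u)$. In practice this is done at the Yosida-regularized level from Section~\ref{sec:wellposed} and then passed to the limit.
\begin{enumerate}[(a)]
\item For $\|u\|_H^2$: the coercivity $\Phi'(r)r\ge C_0r^2-C_1$ and the $\eps\|\nabla u\|^2$ term control the It\^o corrections $\widetilde C_G(1+\|u\|^2)$ and $\widetilde C_J(1+\|u\|^2)$. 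If $L<\infty$ this is automatic, since $u\in[0,L]$ gives $\|u\|_H\le L|\OO|^{1/2}$. If $L=\infty$ one needs $\min(\eps,C_0)>K_2(\widetilde C_G/2+\widetilde C_J)$ to absorb these terms. The result is $\E\|u(t)\|_H^2+c\int_0^t\E\|u\|_V^2\le C(1+t)$.
\item For the energy: the drift yields the dissipation $-\|\eps\Delta u-\Phi'(u)\|_H^2$. The Wiener correction is bounded via \ref{hyp:G}-\ref{hyp:Gbound}. The jump correction $\int_Z[\Phi(u+J)-\Phi(u)-\Phi'(u)J]\,\nu$ is bounded by Taylor's formula with the convexity of $\Psi''$: the intermediate point lies between $u$ and $u+J$, and \ref{hyp:delta} forces it into $[\delta_Ju,(1-\delta_J)L+\delta_Ju]$, so $\Psi''$ there is at most the maximum at the endpoints. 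Assumption \ref{hyp:Jbound} then applies. Both corrections are bounded by $C(1+\|\Psi(u)\|_{L^1})$.
\item The right-hand side of (b) grows with the energy, so a plain energy bound would only give exponential growth. To close it, use convexity, $\int\Psi(u)\le\int\Psi'(u)(u-u_*)+C$, which dominates the energy by the dissipation plus $\|u\|_H^2$. The dissipation $\|\eps\Delta u-\Phi'(u)\|^2$ in turn controls $\eps^2\|\Delta u\|^2+\|\Psi'(u)\|^2$ up to lower-order terms, by monotonicity of $\Psi'$.
\end{enumerate}
This closing step is the main obstacle, since the Lyapunov structure has to beat the linear-in-energy noise corrections. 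If it does not close directly, the fallback is to bound only the time average of $\|u\|_V^2$ from (a), together with $\int_0^t\E\|\Psi'(u)\|^2/t\le C$ obtained by testing with $\Psi'(u)$ and using (a). That suffices for tightness on $H$.

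Given the bound, the measures $\mu_t:=\frac1t\int_0^tP_s^*\delta_{u_0}\,\d s$ are tight on $H$ by Chebyshev and the compact embedding $V\embed H$. By Feller continuity, any limit point is invariant. Lower semicontinuity of $v\mapsto\|v\|_V^2+\|\Psi(v)\|_{L^1}+\|\Psi'(v)\|_H^2$ (Fatou, using that $\Psi'$ is maximal monotone) gives that the limit has finite integral of this functional, hence is supported on $\mathcal A_{\rm str}$. The set of invariant measures is convex and weakly compact, so its extreme points, which are ergodic, exist by Krein--Milman, and one of them is still supported on $\mathcal A_{\rm str}$.

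\textbf{Uniqueness and mixing under the Dirichlet condition.} For two solutions, apply It\^o's formula to $\|u_1-u_2\|_H^2$. Monotonicity of $\Psi'$ lets us drop that term, Poincar\'e gives $-2\eps\|\nabla w\|^2\le-2\eps\rho_1\|w\|^2$, and the Lipschitz bounds give
\[
\frac{\d}{\d t}\E\|w\|_H^2\le-2\left(\eps\rho_1-\sqrt{C_F}-\tfrac{C_G}{2}-\tfrac{C_J}{2}\right)\E\|w\|_H^2 ,
\]
so $\E\|u_1(t)-u_2(t)\|^2\le e^{-2\kappa t}\|u_{01}-u_{02}\|^2$ with $\kappa>0$ under the stated condition, which suffices. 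Hence $|P_t\varphi(x)-P_t\varphi(y)|\le \mathrm{Lip}(\varphi)e^{-\kappa t}\|x-y\|_H$. Integrating this against an invariant measure $m$ and using $\int\|x\|^2\,\d m<\infty$ from the previous step, which is uniform when $L=\infty$ under the extra condition, we get $|P_t\varphi(x)-\int\varphi\,\d m|\le Ce^{-\kappa t}(1+\|x\|_H)$. This gives exponential mixing for Lipschitz observables, hence strong mixing, uniqueness, and ergodicity.
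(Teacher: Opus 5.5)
There is a genuine gap in the first assertion of the theorem, the existence of an \emph{ergodic} invariant measure supported on $\mathcal A_{\mathrm{str}}$. This assertion must hold without the Dirichlet/dissipativity conditions, so it cannot be recovered from uniqueness. You claim that the set of invariant measures is weakly compact and that one of its extreme points is supported on $\mathcal A_{\mathrm{str}}$. However, everything you prove concerns the orbit of the single deterministic datum $u_0$, so it gives moment and support information only for your particular Krylov--Bogoliubov limit. An arbitrary invariant measure $\gamma$, in particular an extreme point produced by Krein--Milman, comes with no a priori bound. Hence neither tightness of the set of invariant measures nor $\gamma(\mathcal A_{\mathrm{str}})=1$ follows. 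Without an ergodic decomposition theorem, which you do not invoke, the support of the constructed measure says nothing about the extreme points.

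The missing step is the one carried out in Subsection~\ref{ssec:inv_support}: bounds valid for \emph{every} invariant measure, with constants independent of it. Take $\Theta_n(x)=\min\{\|x\|_H^2,n^2\}$. Invariance gives $\int\Theta_n\,\d\gamma=\int P_t\Theta_n\,\d\gamma$. The exponential form of your estimate (a), $\E\|u^x(t)\|_H^2\le e^{-ct}\|x\|_H^2+C$ (this is where the extra condition for $L=+\infty$ enters), lets you send $t\to+\infty$ by dominated convergence and then $n\to\infty$, giving $\int\|x\|_H^2\,\d\gamma\le C$. Next, integrate invariance over $s\in[0,1]$ and use finite-time bounds for $\|u\|_V^2$, $\|\Psi(u)\|_{L^1(\OO)}$ and $\|\Phi'(u)\|_H^2$ (Lemmas~\ref{lem:apriori1}--\ref{lem:apriori2}); this yields uniform moments of these quantities. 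Only then is every invariant measure supported on $\mathcal A_{\mathrm{str}}$ and the set of invariant measures tight (and closed, by the Feller property), so that Krein--Milman gives an ergodic measure on $\mathcal A_{\mathrm{str}}$. Your own time-uniform averages from (c) would work equally well if applied with initial datum distributed as $\gamma$, combined with truncation and Jensen's inequality, but this step has to be made. Your mixing argument also uses the unproved bound $\int\|x\|_H^2\,\d m<\infty$ for an arbitrary invariant $m$. There it can be bypassed by comparing with the constructed measure, using dominated convergence with bounded Lipschitz test functions.

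A secondary point concerns (b). Under the hypotheses you cannot apply It\^o's formula to the full free energy $\frac\eps2\|\nabla u\|_{\boldsymbol H}^2+\int_\OO\Phi(u)$. The gradient part generates the corrections $\frac\eps2\sum_k\|\nabla(G(u)e_k)\|^2$ and $\frac\eps2\int_Z\|\nabla J(u,z)\|^2\,\nu(\d z)$, whereas \ref{hyp:G} and \ref{hyp:J} only give $G(u)\in\cL^2(U,H)$ and $J(u,z)\in H$. The dissipation $\|\eps\Delta u-\Phi'(u)\|_H^2$ is likewise unavailable. The paper uses only $\int_\OO\Phi(u)$, whose drift produces $\eps\int_\OO\Phi''(u)|\nabla u|^2+\|\Phi'(u)\|_H^2$, with the $F'$-part controlled by (a). This is your fallback, and your convexity trick does close at that level: $C\|\Psi(u)\|_{L^1(\OO)}\le\frac12\|\Phi'(u)\|_H^2+C'(1+\|u\|_H^2)$. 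So the noise corrections are absorbed, and the time averages of $\|\Phi'(u)\|_H^2$ are bounded uniformly in $t$. This is stronger than Lemma~\ref{lem:apriori2}, whose constant depends on $t$. It also lets you run Krylov--Bogoliubov directly on $\mathcal A$, with the compact sets $\{\|v\|_V\le n,\ \|\Psi(v)\|_{L^1(\OO)}\le n\}\subset\mathcal A$, and read off the support of the limit by lower semicontinuity. Your contraction estimate is correct. Your constant $C_J/2$ is the exact It\^o correction for $\frac12\|w\|_H^2$, so the stated condition is more than sufficient.
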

    \begin{remark}
        \andrea{It is expected that the actual support of the invariant measures of the system is even a subset of $\mathcal A_{\mathrm{str}}$. This can be shown by using higher-order estimates (as in \cite{Bertacco21}, for instance) that, however, need further integrability assumptions to hold on the parameters of the problem.}
        
    \end{remark}
	\section{Proof of Theorem~\ref{thm:wellposed}}
    \label{sec:wellposed}
    \andrea{This section is devoted to proving Theorem \ref{thm:wellposed}, i.e., a well-posedness result for system \eqref{eq:strongform}. Without loss of generality, we shall assume Neumann boundary conditions, namely $\alpha_d = 0$ and $\alpha_n = 1$. The proof in the Dirichlet case is analogous.}
    \subsection{An approximated problem.} \label{ssec:approximation}
    By virtue of Assumption \ref{hyp:psi}, we can apply the theory of maximal monotone operators (see for instance \cite{barbu-monot, brezis-monot}) to introduce a suitable regularization of the single-valued subdifferential $\partial\Psi$, which we shall denote by $\Psi'$, enabling in turn a regularization of $\Psi$. Indeed, since $\Psi$ is proper, convex and lower semicontinuous, the subdifferential $\Psi'$ can be identified with a maximal monotone graph in $\mathbb R \times \mathbb R$. It is therefore possible to consider the so-called Yosida approximation of $\Psi$, namely a one-parameter family of globally-defined and everywhere non-negative functions $\{\Psi_\lambda\}_{\lambda > 0}$ enjoying the following properties (cf. \cite{barbu-monot}):
\begin{enumerate}[label=(\alph*)]
	\item \label{pty:monot}$\Psi_\lambda$ is convex and converges to $\Psi$ pointwise in $\erre$ and monotonically increasing as $\lambda \to 0^+$;
	\item $\Psi_\lambda \in C^{1,1}(\mathbb{R})$ and the Lipschitz constant of $\Psi_\lambda'$ is $\frac{1}{\lambda}$;
	\item \label{pty:monotonicityder} $|\Psi_\lambda'|$ converges to $|\Psi'|$ pointwise in $\erre$ and monotonically increasing as $\lambda \to 0^+$,
	\item $\Psi_\lambda(0) = \Psi_\lambda'(0) = 0$ for all $\lambda > 0$.
\end{enumerate}
For any $\lambda > 0$, let $R_\lambda$ denote the non-expansive continuous resolvent operator
\[
R_\lambda: \mathbb{R} \to \mathbb{R}, \qquad R_\lambda(s) := (I+\lambda\Psi')^{-1}(s),
\qquad s\in\erre.
\]
Then, the Yosida approximation $\Psi_\lambda$ of $\Psi$ is precisely defined by
\[
\Psi_\lambda(s) = \Psi(R_\lambda(s)) + \dfrac{1}{2\lambda}|s - R_\lambda(s)|^2
\]
for all $s \in \mathbb R$. In order to setup an approximated version of \eqref{eq:strongform}, it is necessary to account for the fact that the operators $G$ and $J$ may not be well defined when evaluated on regular solutions, i.e., that approximated solutions may not belong to $\Gamma$.  For all $\lambda > 0$, define then the regularized operators
\[
\luca{G_\lambda: H \to \cL^2(U, H),} \qquad G_\lambda = G \circ R_\lambda
\]
as well as
\[
\luca{J_\lambda: H \times Z \to H,} \qquad J_\lambda(u, z) = J(R_\lambda(u), z) - \andrea{\lambda\Psi'(R_\lambda(u))} \qquad \forall \: (u,z) \in H \times Z.
\]
The action of the resolvent operator enables $G_\lambda$ and $J_\lambda(\cdot, z)$ to be globally defined operators over $H$ (and not only on $\Gamma$), for all $z \in Z$.
Before moving on, we prove an elementary, yet important preliminary result.
\begin{lem} \label{lem:properties}
    \andrea{The families of regularized functions $\{G_\lambda\}_{\lambda > 0}$ and $\{J_\lambda\}_{\lambda > 0}$ satisfy the following properties:
    \begin{enumerate}[(i)] \itemsep0.3em
        \item there exist two positive constants, that without loss of generality we still denote by $C_G$ and $C_J$, such that the two Lipschitz conditions
        \begin{align*}
            \|G_\lambda(v_1)-G_\lambda(v_2)\|^2_{\cL^2(U,H)} & \leq C_G\|v_1-v_2\|^2_H \\
            \int_Z\|J_\lambda(v_1,z)-J_\lambda(v_2,z) \|^2_{H}\: \nu(\d z) & \leq C_J\|v_1-v_2\|^2_H
        \end{align*}
        hold for all $\lambda > 0$ and all $v_1$, $v_2 \in V$;
        \item if $L <+\infty$, then for the same constant $\delta_J$ appearing in \ref{hyp:J}-\ref{hyp:delta} it holds
        \[
            \delta_JR_\lambda(v) \leq v + J_\lambda(v,z) \leq(1-\delta_J)L+\delta_JR_\lambda(v) \quad \text{a.e. in }\OO, \quad \forall \: v \in H, \quad \forall \: z \in Z,
        \]
        for all $\lambda > 0$. Analogously, if $L =+\infty$, we have
        \[
            \delta_JR_\lambda(v) \leq v + J_\lambda(v,z) \leq C_J(1+R_\lambda(v)) \quad \text{a.e. in }\OO, \quad \forall \: v \in H, \quad \forall \: z \in Z,
        \]
        for all $\lambda > 0$;
        \item there exist two positive constants, that without loss of generality we still denote by $C_G$ and $C_J$, such that the integral bounds
        \begin{align*}
            \sum_{k=1}^{+\infty}\|\sqrt{\Psi''_\lambda(v)}G_\lambda(v)[e_k]\|^2_H & \leq C_G(1+\|\Psi_\lambda(v)\|_{L^1(\OO)}), \\
            \int_Z \left\| \sqrt{\Psi''_\lambda(\delta_J R_\lambda(v))}J_\lambda(v,z) \right\|^2_H \: \nu(\d z) & \leq  C_J(1+\|\Psi_\lambda(v)\|_{L^1(\OO)}), \\
            \int_Z \left\| \sqrt{\Psi''_\lambda((1-\delta_J)L +\delta_J R_\lambda(v))}J_\lambda(v,z) \right\|^2_H \: \nu(\d z) & \leq  C_J(1+\|\Psi_\lambda(v)\|_{L^1(\OO)}) \quad \text{if $L<+\infty$}, \\
            \int_Z \left\| \sqrt{\Psi''_\lambda(C_J(1+R_\lambda(v))}J_\lambda(v,z) \right\|^2_H \: \nu(\d z) & \leq  C_J(1+\|\Psi_\lambda(v)\|_{L^1(\OO)}) \quad \text{if $L=+\infty$},
        \end{align*}
        hold for all $\lambda >0$ and $v \in H$.
    \end{enumerate}}
\end{lem}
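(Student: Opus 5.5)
The plan is to reduce everything to the corresponding properties of $G$ and $J$ in Assumptions \ref{hyp:G} and \ref{hyp:J}, evaluated at $R_\lambda(v)$, using four elementary facts about the resolvent and the Yosida approximation. First, $R_\lambda$ and $I-R_\lambda$ are non-expansive on $\erre$, since $R_\lambda$ is firmly non-expansive. Second, $R_\lambda(0)=0$ by property (d), and $R_\lambda(\erre)\subset \overline{D(\partial\Psi)}\subset[0,L]$. Hence $R_\lambda(v)\in\Gamma$ for every $v\in H$: if $L=+\infty$ we use $|R_\lambda(v)|\le|v|$ to get $R_\lambda(v)\in H$. Third, $\Psi_\lambda'=\Psi'\circ R_\lambda=\frac1\lambda(I-R_\lambda)$, and in particular $\lambda\Psi'(R_\lambda(v))=v-R_\lambda(v)$. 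Fourth, differentiating, $\Psi_\lambda''(s)=\frac{\Psi''(R_\lambda s)}{1+\lambda\Psi''(R_\lambda s)}\le\min\{\Psi''(R_\lambda s),\tfrac1\lambda\}$. We also have $\Psi_\lambda(s)\ge\Psi(R_\lambda s)$ and $\Psi_\lambda(s)\ge\frac\lambda2|\Psi_\lambda'(s)|^2$.

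Point (ii) is then immediate, and it explains the counter-correction in $J_\lambda$. By the third fact,
\[
v+J_\lambda(v,z)=v+J(R_\lambda(v),z)-(v-R_\lambda(v))=R_\lambda(v)+J(R_\lambda(v),z).
\]
So Assumption \ref{hyp:J}-\ref{hyp:delta} applied to $R_\lambda(v)\in\Gamma$ gives both chains of inequalities verbatim.

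For point (i), the bound for $G_\lambda$ follows from Assumption \ref{hyp:G}-\ref{hyp:Glip} and the pointwise non-expansivity of $R_\lambda$, which gives $\|R_\lambda v_1-R_\lambda v_2\|_H\le\|v_1-v_2\|_H$. For $J_\lambda$ we write
\[
J_\lambda(v_1,z)-J_\lambda(v_2,z)=\big[J(R_\lambda v_1,z)-J(R_\lambda v_2,z)\big]-\big[(I-R_\lambda)v_1-(I-R_\lambda)v_2\big].
\]
The second bracket does not depend on $z$, so its $\nu$-integral is bounded by $\nu(Z)\|v_1-v_2\|_H^2$. Here I expect to need $\nu(Z)<+\infty$, which is implicit in $J_\lambda(v,\cdot)\in L^2(Z,\nu;H)$ at all. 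This is the first delicate point, and I would state it explicitly; the new constant is then $2C_J+2\nu(Z)$.

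Point (iii) is the main obstacle.

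For $G_\lambda$, the fourth fact gives $\Psi_\lambda''(v)\le\Psi''(R_\lambda v)$. Combined with Assumption \ref{hyp:G}-\ref{hyp:Gbound} at $R_\lambda(v)$ and $\Psi(R_\lambda v)\le\Psi_\lambda(v)$, this concludes.

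For $J_\lambda$, I split $J_\lambda(v,z)=J(R_\lambda v,z)-\lambda\Psi_\lambda'(v)$.

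For the correction term, I use $\Psi_\lambda''\le\frac1\lambda$:
\[
\Psi_\lambda''(\cdot)\,\lambda^2|\Psi_\lambda'(v)|^2\le\lambda|\Psi_\lambda'(v)|^2\le2\Psi_\lambda(v).
\]
After integration over $\OO$ and $Z$, this is bounded by $2\nu(Z)\|\Psi_\lambda(v)\|_{L^1(\OO)}$.

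For the main term, I need to compare $\Psi_\lambda''(s)$ with $\Psi''(s)$ at the shifted point $s=\delta_JR_\lambda(v)$ (and similarly at $(1-\delta_J)L+\delta_JR_\lambda(v)$ or $C_J(1+R_\lambda(v))$). Let $m$ be a minimum point of $\Psi$, taking $m=0$ if $\Psi$ is increasing. Then $R_\lambda(s)$ lies between $m$ and $s$. By convexity of $\Psi''$, $\Psi''(R_\lambda s)\le\max\{\Psi''(s),\Psi''(m)\}$, hence $\Psi_\lambda''(s)\le\Psi''(s)+c_m$. (If $m$ is a barrier point where $\Psi''(m)=+\infty$, I would instead use the monotonicity of $\Psi''$ between $m$ and $s$.) The $\Psi''(s)$ part is handled by Assumption \ref{hyp:J}-\ref{hyp:Jbound} at $R_\lambda(v)$, followed by $\Psi(R_\lambda v)\le\Psi_\lambda(v)$. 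The remainder $c_m\int_Z\|J(R_\lambda v,z)\|_H^2\,\nu(\d z)$ is bounded by $c_m\widetilde C_J(1+\|R_\lambda v\|_H^2)$ via Remark \ref{rem:linear_growth}. Finally, $\|R_\lambda v\|_H^2\le C(1+\|\Psi(R_\lambda v)\|_{L^1(\OO)})\le C(1+\|\Psi_\lambda(v)\|_{L^1(\OO)})$, using the coercivity $\Psi'(s)s\ge C_0s^2-C_1$ from \ref{hyp:psi}, which forces quadratic growth of $\Psi$. Collecting the constants gives (iii).
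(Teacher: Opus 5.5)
Your argument follows the paper's proof step for step: non-expansivity of $R_\lambda$ for (i), the identity $v+J_\lambda(v,z)=R_\lambda(v)+J(R_\lambda(v),z)$ for (ii), and for (iii) the bounds $\Psi''_\lambda\le\min\{\Psi''\circ R_\lambda,1/\lambda\}$, $\Psi\circ R_\lambda\le\Psi_\lambda$ and $\lambda|\Psi'_\lambda|^2\le 2\Psi_\lambda$.

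Two of your additions are correct and repair points the paper passes over.
\begin{enumerate}[(a)]
\item The counter-correction $-\lambda\Psi'_\lambda(v)$ does not depend on $z$. Its contributions in (i) and (iii) therefore carry a factor $\nu(Z)$, which the paper drops. So $\nu(Z)<+\infty$ is genuinely needed, and it is not implied by \ref{hyp:mu}.
\item The paper simply writes $\Psi''(\delta_JR_\lambda(v))$ in place of $\Psi''_\lambda(\delta_JR_\lambda(v))$. Your comparison through a minimizer $m$ of $\Psi$ and convexity of $\Psi''$ justifies this step, with the coercivity bound on $\|R_\lambda(v)\|_H^2$ absorbing the extra $c_m$-term. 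It is valid whenever all points involved lie in $(0,L)$ and $\Psi''(m)<+\infty$. For Flory--Huggins it even holds with $c_m=0$, since $m=\frac12$ also minimizes $\Psi''$.
\end{enumerate}

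The gap is your ``fourth fact''. The formula $\Psi''_\lambda(s)=\Psi''(R_\lambda s)/(1+\lambda\Psi''(R_\lambda s))$, and with it $\Psi''_\lambda\le\Psi''\circ R_\lambda$, holds only where $R_\lambda s\in(0,L)$.
\begin{itemize}
\item You yourself allow $R_\lambda s\in\{0,L\}$, and \ref{hyp:psi} permits a barrier to belong to $D(\partial\Psi)$ (obstacle-type potentials). There $\partial\Psi$ is a half-line, $R_\lambda$ is constant on a half-line, and $\Psi''_\lambda=1/\lambda$ on it.
\item In that regime (iii) can actually fail. Take $L=+\infty$, $\Psi(s)=s^2$ for $s\ge0$ and $+\infty$ otherwise, $F=0$, $J=0$, $G\equiv G_0\neq0$; all assumptions hold. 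For a constant $v=-\eta<0$ one has $R_\lambda(v)=0$, $\Psi''_\lambda(v)=1/\lambda$ and $\Psi_\lambda(v)=\eta^2/(2\lambda)$. The first bound would then read $\|G_0\|^2_{\cL^2(U,H)}/\lambda\le C_G(1+|\OO|\eta^2/(2\lambda))$, which fails as $\eta\to0$ once $\lambda<\|G_0\|^2_{\cL^2(U,H)}/C_G$.
\item Your fallback for $\Psi''(m)=+\infty$ lives in the same regime, since a barrier minimizer belongs to $D(\partial\Psi)$. It is also wrong as stated. A convex $\Psi''$ that blows up at $m$ decreases as you move away from $m$, so on that branch monotonicity gives $\Psi''(R_\lambda s)\ge\Psi''(s)$, the wrong direction. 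For example, $\Psi(s)=s^{3/2}+s^2$ on $[0,+\infty)$ satisfies \ref{hyp:psi} with $m=0$, and this reversed inequality holds for every $s>0$.
\end{itemize}

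The missing hypothesis is $D(\partial\Psi)=(0,L)$, i.e.\ $\Psi'(0^+)=-\infty$, and $\Psi'(L^-)=+\infty$ if $L<+\infty$, as for Flory--Huggins. Under it $R_\lambda$ maps into $(0,L)$, so the fourth fact holds everywhere. Every minimizer of $\Psi$ is then interior, so $c_m=\Psi''(m)<+\infty$ and no fallback is needed. With this hypothesis and $\nu(Z)<+\infty$, your proof is complete. The paper's proof rests on the same identity $\Psi''_\lambda=(\Psi''\circ R_\lambda)R'_\lambda$ and has the same gap.

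A minor point: under this hypothesis property (d), and hence $R_\lambda(0)=0$, fails; for Flory--Huggins $R_\lambda(0)\in(0,\frac12)$. To get $R_\lambda(v)\in H$ you only need $|R_\lambda(s)|\le|R_\lambda(0)|+|s|$.
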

\begin{proof}
Let $\lambda > 0$ be fixed and let us start from the Lipschitz conditions given \andrea{in item (i). Using Assumption \ref{hyp:G}-\ref{hyp:Glip}}, for any $v_1$ and $v_2$ in $H$, we have
\[
\begin{split}
    \|G_\lambda(v_1)-G_\lambda(v_2)\|^2_{\cL^2(U,H)} & = \|G(R_\lambda(v_1))-G(R_\lambda(v_2))\|^2_{\cL^2(U,H)} \\
    & \leq C_G\|R_\lambda(v_1)-R_\lambda(v_2)\|^2_H \\
    & \leq C_G\|v_1-v_2\|^2_H,
\end{split}
\]
since the resolvent operator is non-expansive in $H$. \andrea{An analogous argument shows the claim for $J_\lambda$, indeed Assumption \ref{hyp:J}-\ref{hyp:Jlip} gives
\[
\begin{split}
    \int_Z \|J_\lambda(v_1,z)-J_\lambda(v_2,z)\|_H^2 \: \nu(\d z) & \leq 2\int_Z\|J(R_\lambda(v_1), z)-J(R_\lambda(v_2),z)\|^2_H \: \nu(\d z) + 2\lambda^2\|\Psi'_\lambda(v_1)-\Psi'_\lambda(v_2)\|_H^2 \\
    & \leq 2(C_J+1)\|v_1-v_2\|^2_H
\end{split}
\]
as $\Psi'_\lambda$ is $\frac 1\lambda$-Lipschitz-continuous and the resolvent operator is non-expansive. The property in item (ii) can be shown as follows.
Without loss of generality, we show the case $L<+\infty$, as the other is analogous. Let $v \in V$ and $z \in Z$ be arbitrary. Then,
\[
\begin{split}
    v + J(R_\lambda(v),z) & = v - R_\lambda(v)+R_\lambda(v)+J(R_\lambda(v),z) \\
    & = \lambda\Psi_\lambda'(v) + R_\lambda(v)+J(R_\lambda(v),z),
\end{split}
\]
owing to the definition of the resolvent operator $R_\lambda$.
Observe that, by Assumption \ref{hyp:J}-\ref{hyp:delta}, we have
\[
\delta_J R_\lambda(v)\leq R_\lambda(v)+J(R_\lambda(v),z) \leq (1-\delta_J)L+\delta_J R_\lambda(v)
\]
and therefore
\[
\delta_J R_\lambda(v)\leq v + J(R_\lambda(v),z)-\lambda\Psi'_\lambda(v) \leq (1-\delta_J)L+\delta_J R_\lambda(v).
\]
The claim then holds recalling the definition of $J_\lambda$. 
Finally, we show the bounds listed in item (iii). Let $v \in H$.} First, using Assumption \ref{hyp:G}-\ref{hyp:Gbound}, we have  
\[
\begin{split}
    \sum_{k=1}^{+\infty}\left\|\sqrt{\Psi''_\lambda(v)}G_\lambda(v)[e_k]\right\|_H^2 & = \sum_{k=1}^{+\infty}\left\|\sqrt{\Psi''(R_\lambda(v))R'_\lambda(v)}G(R_\lambda(v))[e_k]\right\|_H^2 \\
    & \leq \sum_{k=1}^{+\infty}\|\sqrt{\Psi''(R_\lambda(v))}G(R_\lambda(v))[e_k]\|_H^2 \\
    & \leq C_G(1 + \|\Psi(R_\lambda(v))\|_{L^1(\OO)}) \\
    & \leq C_G(1 + \|\Psi_\lambda(v)\|_{L^1(\OO)}),
\end{split}
\]
since, given the definition of $\Psi_\lambda$, we have $\Psi_\lambda(s) \geq \Psi(R_\lambda(s))$ for all $s \in \mathbb R$. \andrea{For the remaining estimates, once again, we show one of the claimed inequalities, as the three arguments are very similar. Indeed, observing that $\delta_JR_\lambda(v) \in \operatorname{dom} \Psi''$ almost everywhere in $\OO$, 
\[
\begin{split}
    & \int_Z \left\|\sqrt{\Psi''_\lambda(\delta_J R_\lambda(v))}J_\lambda(v,z)\right\|^2_H \: \nu(\d z) 
    \\ 
    & \hspace{2cm}\leq 2\int_Z \|\sqrt{\Psi''(\delta_JR_\lambda(v))}J(R_\lambda(v),z)\|^2_H \: \nu(\d z) + 2\lambda^2 \left\|\sqrt{\Psi''_\lambda(\delta_JR_\lambda(v))}\Psi'_\lambda(v)\right\|^2_H \\
    & \hspace{2cm} \leq 2C_J(1+\|\Psi(R_\lambda(v))\|_{L^1(\OO)}) + 2\lambda\|\Psi'_\lambda(v)\|^2_H \\
    & \hspace{2cm} \leq 2C_J(1+\|\Psi_\lambda(v)\|_{L^1(\OO)}) + 2\lambda\|\Psi'_\lambda(v)\|^2_H.
\end{split}
\]
Recalling the definition of the Yosida approximation, we then have
\[
\Psi_\lambda(v) = \Psi(R_\lambda(v)) + \dfrac{|v-R_\lambda(v)|^2}{2\lambda} = \Psi(R_\lambda(v)) + \dfrac{\lambda}{2}|\Psi'_\lambda(v)|^2.
\]
Integrating this equality over $\OO$, recalling that $\Psi \circ R_\lambda \leq \Psi_\lambda$ and applying the triangular inequality yields then
\[
\lambda\|\Psi'_\lambda(v)\|^2_H \leq 4\|\Psi_\lambda(v)\|_{L^1(\OO)}
\]
and the claim follows.
}
\end{proof} \noindent
We are now in a position to consider the approximated problem 
\begin{equation} \label{eq:yosidastrong} \small
		\begin{cases}
			\d u_\lambda - \andrea{\eps} \Delta u_\lambda  \: \d t
            +\Psi'_\lambda(u_\lambda)\:\d t +
            F(u_\lambda)\:\d t = G_\lambda(u_\lambda) \: \d W + \displaystyle \int_Z J_\lambda(u_\lambda^-,z) \: \overline{\mu}(\d t,\, \d z) & \quad \text{in }  (0,T) \times\OO, \\
			\alpha_d u_\lambda + \alpha_n \partial_{\bn} u_\lambda = 0 & \quad \text{on } (0,T) \times \partial \OO, \\
			u_\lambda(\cdot, 0) = u_0 & \quad \text{in } \OO,
		\end{cases}
\end{equation}
and hence, we show that the above problem is well-posed. To this end, we set up a fixed point argument. For any fixed $\lambda > 0$, let 
\[
\mathcal X := L^r(\Omega; L^\infty(0,T;H) \cap L^2(0,T;V))
\]
where $r := \min \{4,p\}$ and $p$ is given in Theorem~\ref{thm:wellposed},
and consider for each $v \in \mathcal X$ the stochastic differential problem
\begin{equation} \label{eq:fpstrong} \small
		\begin{cases}
			\d w_\lambda - \andrea{\eps}\Delta w_\lambda  \: \d t
             = 
            -\Psi'_\lambda(v)\:\d t - F(v)\:\d t+ G_\lambda(v) \: \d W + \displaystyle \int_Z J_\lambda(v^-,z) \: \overline{\mu}(\d t,\, \d z) & \quad \text{in }  (0,T) \times\OO, \\
			\alpha_d w_\lambda + \alpha_n \partial_{\bn} w_\lambda = 0 & \quad \text{on } (0,T) \times \partial \OO, \\
			w(\cdot, 0) = u_0 & \quad \text{in } \OO,
		\end{cases}
\end{equation}
which, as a linear equation driven by \andrea{additive} noise, is well-posed, for instance, by checking the assumptions listed in \cite{scar-mar}. In particular, the solution map
\[
\andrea{\mathcal{S}} : \mathcal X \to \mathcal X, \qquad v \mapsto w_\lambda
\]
is well defined. Let now $v_1$ and $v_2$ be two elements of $\mathcal X$, and set
\[
w_{i, \lambda} = \andrea{\mathcal{S}} (v_i) \qquad \forall \: i \in \{1,2\}.
\]
Then, the stochastic process $w := w_{1, \lambda} - w_{2, \lambda}$ formally satisfies the stochastic partial differential system
\begin{equation} \label{eq:fpstrong2} \small
		\begin{cases}
			\d w -  \andrea{\eps}\Delta w  \: \d t
             = 
            -\left[\Psi'_\lambda(v_1)-\Psi'_\lambda(v_2)\right]\:\d t - \left[ F(v_1) - F(v_2)\right]\:\d t \\
            \qquad +  \left[ G_\lambda(v_1) - G_\lambda(v_2) \right] \: \d W + \displaystyle \int_Z \left[ J_\lambda(v^-_1,z) - J_\lambda(v^-_2,z) \right] \: \overline{\mu}(\d t,\, \d z) & \quad \text{in }  (0,T) \times\OO, \\
			\alpha_d w + \alpha_n \partial_{\bn} w = 0 & \quad \text{on } (0,T) \times \partial \OO, \\
			w(\cdot, 0) = 0 & \quad \text{in } \OO.
		\end{cases}
\end{equation}
For some $t \in (0,T)$, let us apply the It\^{o} formula to the squared $H$-norm of $w(t)$, i.e., precisely, to the functional
\[
w \mapsto \dfrac{1}{2}\|w\|^2_H 
\]
defined over $H$. Due to the presence of a semimartingale noise, the ordinary It\^{o} formula for twice differentiable functionals (as for instance in \cite[Theorem 4.32]{dapratozab}) is not complete: for the jump part we indeed resort to \cite[Theorem B.1]{itojump} and get 
\begin{multline} \label{eq:fp1}
     \dfrac{1}{2}\|w(t)\|^2_H +  \andrea{\eps}\int_0^t \|\nabla w(s)\|^2_{\b H} \: \d s \\  = - \int_0^t \left(w(s), \Psi'_\lambda(v_1(s))-\Psi'_\lambda(v_2(s))\right)_H \: \d s - \int_0^t \left(w(s), F(v_1(s))-F(v_2(s))\right)_H \: \d s \\
     + \int_0^t \left(w(s), G_\lambda(v_1(s))-G_\lambda(v_2(s)) \right)_H \: \d W(s) + \dfrac{1}{2} \int_0^t \|G_\lambda(v_1(s))-G_\lambda(v_2(s))\|^2_{\mathscr L^2(U, H)} \: \d s \\
     + \int_0^t \int_Z (w(s^-), J_\lambda(v_1(s^-),z) - J_\lambda(v_2(s^-),z))_H\: \overline{\mu}(\d s,\, \d z) \\
     + \int_0^t \int_Z \big[ \|w(s^-) + J_\lambda(v_1(s^-),z) - J_\lambda(v_2(s^-),z)\|^2_H \\ -\|w(s^-)\|^2_H - 2(w(s^-), J_\lambda(v_1(s^-),z) - J_\lambda(v_2(s^-),z))_H \big] \: \nu(\d z) \: \d s.
\end{multline}
Let us handle the various terms appearing to the right hand side of \eqref{eq:fp1}. First of all, by the Lipschitz properties listed in Assumptions \ref{hyp:psi} and \ref{hyp:F}, we have by the Cauchy--Schwarz and Young inequalities
\begin{equation} \label{eq:fp11}
    \begin{split}
        \left| \int_0^t \left(w(s), \Psi'_\lambda(v_1(s))-\Psi'_\lambda(v_2(s))\right)_H \: \d s + \int_0^t \left(w(s), F(v_1(s))-F(v_2(s))\right)_H \: \d s \right| \\
        \qquad \leq  \andrea{\sigma} \supp \|w(\tau)\|^2_H  + C_\sigma\left(\dfrac{1}{\lambda^2} + C_F\right)\int_0^t\|v_1(s)-v_2(s)\|^2_H \: \d s
    \end{split}
\end{equation} 
for any $ \andrea{\sigma} > 0$ and for some positive constant $C_\sigma$ only depending on $\sigma$. As for the It\^{o} trace term, we employ Lemma \ref{lem:properties} to get
\begin{equation} \label{eq:fp12}
    \dfrac{1}{2} \int_0^t \|G_\lambda(v_1(s))-G_\lambda(v_2(s))\|^2_{\mathscr L^2(U, H)} \: \d s  \leq \dfrac{C_G}{2} \int_0^t \|v_1(s)-v_2(s)\|^2_H \: \d s.
\end{equation}
Finally, the second-order term in the jump term, i.e., the last term in \eqref{eq:fp1}, after some elementary manipulations, equals 
\begin{equation} \label{eq:fp13}
    \begin{split}
        & \int_0^t \int_Z \big[ \|w(s^-) + J_\lambda(v_1(s^-),z) - J_\lambda(v_2(s^-),z)\|^2_H \\ & \hspace{2cm} -\|w(s^-)\|^2_H -\marghe{2} (w(s^-), J_\lambda(v_1(s^-),z) - J_\lambda(v_2(s^-),z))_H \big] \: \nu(\d z) \: \d s \\
        & = \int_0^t \int_Z \|J_\lambda(v_1(s^-),z) - J_\lambda(v_2(s^-),z)\|^2_H \: \nu(\d z) \: \d s \\
        & \leq C_J\int_0^t\|v_1(s)-v_2(s)\|^2_H \: \d s.
    \end{split}
\end{equation}
Taking into account \eqref{eq:fp11}-\eqref{eq:fp13} into \eqref{eq:fp1} (after selecting any $ \andrea{\sigma}$ > 0 small enough), \andrea{letting $r = \min \{4, p\}$ and then} taking $\frac {\andrea{r}}{2}$-th powers,  supremums in time and $\P$-expectations, we have,
\begin{multline} \label{eq:fp2}
     \E \supp \|w(\tau)\|^r_H + \E \left| \int_0^t \|\nabla w(s)\|^2_{\b H} \: \d s \right|^\frac r2 \\  \leq  C\left[ \E \left| \int_0^t\|v_1(s)-v_2(s)\|^2_H \: \d s \right|^\frac r2
     + \E \supp \left| \int_0^\tau \left(w(s), G_\lambda(v_1(s))-G_\lambda(v_2(s)) \right)_H \: \d W(s) \right|^\frac r2 \right.
     \\ \left. + \E \supp \left| \int_0^\tau \int_Z (w(s^-), J_\lambda(v_1(s^-),z) - J_\lambda(v_2(s^-),z))_H\: \overline{\mu}(\d s,\, \d z) \right|^\frac r2 \right]
\end{multline}
for a constant $C > 0$ depending on the structural parameters of the problem. We are only left with the stochastic integrals to the right hand side of \eqref{eq:fp2}. Indeed, they can be handled by means of a suitable Burkholder--Davis--Gundy inequality. In the first case, the version of the inequality for continuous real-valued martingales (see, for instance, \cite[Theorem 3.49]{pes-zab}) gives
\begin{equation} \label{eq:fp21}
\begin{split}
    & \E \supp \left| \int_0^\tau \left(w(s), G_\lambda(v_1(s))-G_\lambda(v_2(s)) \right)_H \: \d W(s) \right|^\frac r2 \\ & \hspace{2cm}  \leq C\E  \left| \int_0^t \|w(s)\|^2_H \|G_\lambda(v_1(s))-G_\lambda(v_2(s))\|^2_{\cL^2(U, H)} \: \d s \right|^\frac r4 \\ 
    & \hspace{2cm} \leq C\E  \left[ \supp \|w(\tau)\|^\frac{r}{2}_H \left| \int_0^t  \|G_\lambda(v_1(s))-G_\lambda(v_2(s))\|^2_{\cL^2(U, H)} \: \d s \right|^\frac r4 \right]\\
    & \hspace{2cm} \leq C\E \left[ \supp \|w(\tau)\|^\frac{r}{2}_H \left| \int_0^t  \|v_1(s)-v_2(s)\|^2_{H} \: \d s \right|^\frac r4 \right] \\
    & \hspace{2cm} \leq \dfrac{1}{4}\E \supp \|w(\tau)\|^r_H + C\E\left| \int_0^t \|v_1(s)-v_2(s)\|^2_{H} \: \d s \right|^\frac r2,
\end{split}
\end{equation}
while the similar result for the more general case of càdlàg processes \cite[Theorem 1]{mar-rock} gives, for the second integral, \andrea{as $r = \min\{4,p\} \leq 4$}, we have 
 
\begin{equation} \label{eq:fp22}
    \begin{split}
    & \E \supp \left| \int_0^\tau \int_Z (w(s^-), J_\lambda(v_1(s^-),z) - J_\lambda(v_2(s^-),z))_H\: \overline{\mu}(\d t,\, \d z) \right|^\frac r2 \\ & \hspace{2cm}  \leq C\E  \left| \int_0^t \|w(s^-)\|^2_H \int_Z \|J_\lambda(v_1(s^-),z) - J_\lambda(v_2(s^-),z)\|^2_{H} \:  \nu(\d z) \: \d s \right|^\frac r4 \\ 
    & \hspace{2cm} \leq C\E  \left[ \supp \|w(\tau^-)\|^\frac{r}{2}_H \left| \int_0^t  \|v_1(s^-)-v_2(s^-)\|^2_H \: \d s \right|^\frac r4 \right]\\
    & \hspace{2cm} \leq \dfrac{1}{4}\E \supp \|w(\tau)\|^r_H + C\E\left| \int_0^t \|v_1(s^-)-v_2(s^-)\|^2_{H} \: \d s \right|^{\frac r2},
\end{split}
\end{equation}
where we have also used the fact that
\[
\supp \|w(\tau^-)\|^r_H = \supp \lim_{y \to \tau^-} \|w(y)\|^r_H \leq \supp \|w(\tau)\|^r_H,
\]
tacitly extending $w$ to $w(0) = 0$ in a small left neighborhood of 0. Finally, owing to \eqref{eq:fp21} and \eqref{eq:fp22}, \eqref{eq:fp2} now reads
\begin{multline*}
         \E \supp \|w(\tau)\|^r_H + \E \left| \int_0^t \|\nabla w(s)\|^2_{\b H} \: \d s \right|^\frac r2 \\  \leq  C\left[ \E \left| \int_0^t\|v_1(s)-v_2(s)\|^2_H \: \d s \right|^\frac r2 +  \E \left| \int_0^t\|v_1(s^-)-v_2(s^-)\|^2_H \: \d s \right|^\frac r2\right]
\end{multline*}
which \andrea{eventually} leads to 
\begin{equation*}
         \E \supp \|w(\tau)\|^r_H + \E \left| \int_0^t \|\nabla w(s)\|^2_{\b H} \: \d s \right|^\frac r2 \\  \leq  C t^\frac r2  \E \supp \|v_1(\tau)-v_2(\tau)\|^r_H,
\end{equation*}
where $C > 0$ only depends on the structural parameters of the problem and, in particular, is independent of the time $t \geq 0$. Recalling the definition of $w$, we have indeed
\[
\|\andrea{\mathcal{S}}(v_1) - \andrea{\mathcal{S}}(v_2)\|^r_{\mathcal X} \leq Ct^\frac r2 \|v_1-v_2\|_{\mathcal X}^r.
\]
As the Lipschitz constant depends continuously on $t$ and vanishes in the limit $t \to 0^+$ for all fixed $p \geq 1$, there exists some final time $T_* = T_*(p)$ (\andrea{recall that $r = r(p)$}) depending also on the structural parameters of the problem such that
\[
 C^\frac 1rT_*^\frac 12  <1,
\]
and, therefore, the classical Banach fixed point theorem for contractions applies, yielding existence and uniqueness of a fixed point for the map $\andrea{\mathcal{S}}$, i.e., existence and uniqueness of a local-in-time probabilistically-strong solution to problem \eqref{eq:yosidastrong}. In order to retrieve a global solution, exploiting uniqueness of solutions, we can apply a standard patching argument (up to $\P$-indistinguishability), since the maximal time $T_*$ only depends on universal constants.
\subsection{Uniform estimates}
\label{sec:unif_est}
Having established the existence of approximated solutions, we now present an ensemble of uniform estimates with respect to the Yosida regularization parameter $\lambda > 0$.
\paragraph{\textit{First estimate.}} First of all, we want to apply the It\^{o} formula to the $H$-norm of $u_\lambda$. Following the procedure illustrated in the previous \andrea{sub}section, invoking again both \cite[Theorem 4.32]{dapratozab}) and \cite[Theorem B.1]{itojump} we have
\begin{multline} \label{eq:uniform11}
     \dfrac{1}{2}\|u_\lambda(t)\|^2_H + \andrea{\eps}\int_0^t \|\nabla u_\lambda(s)\|^2_{\b H} \: \d s = \frac 12\|u_\lambda(0)\|^2_H- \int_0^t \left(u_\lambda(s), \Psi'_\lambda(u_\lambda(s))\right)_H \: \d s - \int_0^t \left(u_\lambda(s), F(u_\lambda(s))\right)_H \: \d s \\
     + \int_0^t \left(u_\lambda(s), G_\lambda(u_\lambda(s))\: \d W(s) \right)_H  + \dfrac{1}{2} \int_0^t \|G_\lambda(u_\lambda(s))\|^2_{\mathscr L^2(U, H)} \: \d s \\
     + \int_0^t \int_Z (u_\lambda(s^-), J_\lambda(u_\lambda(s^-),z))_H\: \overline{\mu}(\d s,\, \d z) 
     \\+ \int_0^t \int_Z \big[ \|u_\lambda(s^-) + J_\lambda(u_\lambda(s^-),z)\|^2_H  -\|u_\lambda(s^-)\|^2_H - 2(u_\lambda(s), J_\lambda(u_\lambda(s^-),z))_H \big] \: \nu(\d z) \: \d s.
\end{multline}
In particular, recalling the monotonicity of $\Psi'_\lambda$ due to the convexity of $\Psi_\lambda$, we clearly have
\begin{equation} \label{eq:uniform12}
    \luca{\int_0^t \left(u_\lambda(s), \Psi'_\lambda(u_\lambda(s))\right)_H \: \d s \geq -C_1},    
\end{equation}
and by Assumption \ref{hyp:F} and Lemma \ref{lem:properties} we have
\begin{equation} \label{eq:uniform13}
   \left| \int_0^t \left(u_\lambda(s), F(u_\lambda(s))\right)_H \: \d s \right|\leq 
   \luca{\sqrt{\widetilde C_F} 
   \int_0^t \left(1+\|u_\lambda(s)\|_H\right)^2 \: \d s }
\end{equation}
as well as
\begin{equation} \label{eq:uniform14}
    \dfrac{1}{2} \int_0^t \|G_\lambda(u_\lambda(s))\|^2_{\mathscr L^2(U, H)} \: \d s \leq C\left( t + \int_0^t \|u_\lambda(s)\|^2_H \: \d s \right).
\end{equation}
For the last term in \eqref{eq:uniform11}, we obtain
\begin{equation} \label{eq:uniform15}
\begin{split}
     & \int_0^t \int_Z \big[ \|u_\lambda(s^-) + J_\lambda(u_\lambda(s^-),z)\|^2_H  -\|u_\lambda(s^-)\|^2_H - 2(u_\lambda(s^-), J_\lambda(u_\lambda(s^-),z))_H \big] \: \nu(\d z) \: \d s \\
     & \hspace{4cm} = \int_0^t \int_Z \|J_\lambda(u_\lambda(s^-), z)\|^2_H \: \nu(\d z) \: \d s \\
     & \hspace{4cm} \leq C\left( t + \int_0^t\|u_\lambda(s)\|^2_H \: \d s \right). 
\end{split}
\end{equation}
Collecting \eqref{eq:uniform12}-\eqref{eq:uniform15} in \eqref{eq:uniform11}, then taking $\frac r2$-th powers, \andrea{with $r = \min\{4,p\}$}, supremums in time and $\P$-expectations, we have,
\begin{multline} \label{eq:uniform16}
     \E \supp \|u_\lambda(\tau)\|^r_H + \E \left| \int_0^t \|\nabla u_\lambda(s)\|^2_{\b H} \: \d s \right|^\frac r2 \leq C\left( t^\frac r2 + \E \left| \int_0^t \|u_\lambda(s)\|^2_H \: \d s \right|^\frac{\marghe{r}}{2} \right) \\ 
     + \E \supp \left| \int_0^\tau  \left(u_\lambda(s), G_\lambda(u_\lambda(s)) \right)_H \: \d W(s) \right|^\frac r2 
     \\ + \E \supp \left| \int_0^t \int_Z (u_\lambda(s^-), J_\lambda(u_\lambda(s^-),z))_H\: \overline{\mu}(\d \andrea{s},\, \d z) \right|^\frac r2.
\end{multline}
In order to handle the stochastic terms in \eqref{eq:uniform16}, we mimic the computations in \eqref{eq:fp21} and \eqref{eq:fp22}, namely
\begin{equation} \label{eq:uniform17}
\begin{split}
    & \E \supp \left| \int_0^\tau  \left(u_\lambda(s), G_\lambda(u_\lambda(s)) \right)_H \: \d W(s) \right|^\frac r2 \\ & \hspace{1cm}  \leq C\E  \left| \int_0^t \|u_\lambda(s)\|^2_H \|G_\lambda(u_\lambda)\|^2_{\cL^2(U, H)} \: \d s \right|^\frac r4 \\ 
    & \hspace{1cm} \leq C\E  \left[ \supp \|
    u_\lambda(\tau)\|^\frac{r}{2}_H \left| \int_0^t  \|G_\lambda(u_\lambda(s))\|^2_{\cL^2(U, H)} \: \d s \right|^\frac r4 \right]\\
    & \hspace{1cm} \leq C\E \left[ \supp \|u_\lambda(\tau)\|^\frac{r}{2}_H \left( t^\frac r4 + \left|\int_0^t  \|u_\lambda(s)\|^2_{H} \: \d s \right|^\frac r4 \right)\right] \\
    & \hspace{1cm} \leq \dfrac{1}{4}\E \supp \|u_\lambda(\tau)\|^r_H + C\left( t^\frac r2 + \E\left|\int_0^t  \|u_\lambda(s)\|^2_{H} \: \d s \right|^\frac r2 \right),
\end{split}
\end{equation}
and, by the same token, \andrea{using also the fact that $r \leq 4$},
\begin{equation} \label{eq:uniform18}
    \begin{split}
    & \E \supp \left| \int_0^t \int_Z (u_\lambda(s^-), J_\lambda(u_\lambda(s^-),z))_H\: \overline{\mu}(\d \andrea{s},\, \d z) \right|^\frac r2\\ 
    & \hspace{1cm}  \leq C\E  \left| \int_0^t \|u_\lambda(s^-)\|^ 2_H \int_Z \|J_\lambda(u_\lambda(s^-),z)\|^2_{H} \:  \nu(\d z) \: \d s \right|^\frac r4 \\ 
    & \hspace{1cm} \leq C\E  \left[ \supp \|u_\lambda(\tau^-)\|^\frac{r}{2}_H \left( t^\frac r4 + \left|\int_0^t  \|u_\lambda(s)\|^2_{H} \: \d s \right|^\frac r4 \right) \right]\\
    & \hspace{1cm} \leq \dfrac{1}{4}\E \supp \|u_\lambda(\tau)\|^r_H + C\left( t^\frac r2 +\E \left|\int_0^t  \|u_\lambda(s)\|^2_{H} \: \d s \right|^\frac r2 \right),
\end{split}
\end{equation}
Combining \eqref{eq:uniform16}, \eqref{eq:uniform17} and \eqref{eq:uniform18} and exploiting the Gronwall lemma, we obtain that there exists a constant $C_1 > 0$ independent of $\lambda > 0$ such that
\begin{equation} \label{eq:uniform1final}
\|u_\lambda\|_{L^\andrea{r}(\Omega; L^\infty(0,T;H)) \cap L^\andrea{r}(\Omega; L^2(0,T;V))} \leq C_1
\end{equation}
for all $\lambda >0$ \andrea{and $r = \min\{4,p\}$.}
\paragraph{\textit{Second estimate.}}
A second useful estimate is given by applying the It\^{o} lemma to 
\[
\mathcal H: H \to \mathbb R \qquad \mathcal H(v) := \int_\OO \Phi_\lambda(v) \: \d x
\]
evaluated at $v = u_\lambda(s)$ for any $s \geq 0$, where, in turn, we set
\[
\Phi_\lambda: \mathbb R \to \mathbb R \qquad \Phi_\lambda(r) = \Psi_\lambda(r) + \int_0^r F(s) \: \d s
\]
for all $r \in \mathbb R$. \andrea{Without loss of generality, we assume that $\Phi_\lambda \geq 0$. Indeed, it is well known that for $\lambda \in (0,\lambda_0)$ it holds
\[
\Psi_\lambda(s) \geq \dfrac{1}{4\lambda_0}s^2-C_{\lambda_0},
\]
and therefore, as Assumption \ref{hyp:F} implies 
\luca{$|F(s)| \leq \sqrt{\widetilde C_F}(1+s)$} for all $s \in \erre$, we also have
\[
\luca{\Phi_\lambda(s) \geq \left(\dfrac{1}{4\lambda_0}-\sqrt{\widetilde C_F} \right)s^2-C_{\lambda_0}-\widetilde C_F}
\]
for all $s \in \mathbb R$ and all $\lambda \in (0,\lambda_0)$. Choosing any $\lambda_0>0$ sufficiently small yields a bound from below for $\Phi_\lambda$. The application of the It\^{o} lemma }yields (compare with \cite[Subsection 4.2]{Bertacco21}), \andrea{thanks to \cite[Theorem.~B.1]{itojump}}
{\footnotesize
\begin{multline} \label{eq:uniform21}
    \int_\OO \Phi_\lambda(u_\lambda(t))\: \d x + \andrea{\eps}\int_0^t (\Phi_\lambda''(u_\lambda(s)), |\nabla u_\lambda(s)|^2)_H \: \d s + \int_0^t \|\Phi'_\lambda(u_\lambda(s))\|^2_H \: \d s \\
    = \int_\OO \Phi_\lambda(u_\lambda(0))\: \d x + \dfrac{1}{2}\int_0^t \sum_{k=0}^{+\infty} (\Phi''_\lambda(u_\lambda(s)), |G_\lambda(u_\lambda(s))[e_k]|^2)_H \: \d s \\ + \int_0^t (\Phi'_\lambda(u_\lambda(s)), G_\lambda(u_\lambda(s)))_H \: \d W(s)
    + \int_0^t \int_Z (\Phi_\lambda'(u_\lambda(s^-)), J_\lambda(u_\lambda(s^-),z))_H\: \overline{\mu}(\d t,\, \d z) 
     \\+ \int_0^t \int_Z \left[ \int_\OO \Phi_\lambda(u_\lambda(s^-) + J_\lambda(u_\lambda(s^-),z)) \: \d x - \int_\OO \Phi_\lambda(u_\lambda(s^-)) \: \d x  - \int_\OO \Phi_\lambda'(u_\lambda(s^-)) J_\lambda(u_\lambda(s^-),z)\: \d x \right] \: \nu(\d z) \: \d s.
\end{multline}}On the left hand side, we observe that
\begin{equation} \label{eq:uniform22}
    \begin{split}
        \int_0^t (\Phi_\lambda''(u_\lambda(s)), |\nabla u_\lambda(s)|^2)_H \: \d s & = \int_0^t (\Psi_\lambda''(u_\lambda(s)), |\nabla u_\lambda(s)|^2)_H \: \d s + \int_0^t (F'(u_\lambda(s)), |\nabla u_\lambda(s)|^2)_H \: \d s \\
        & \geq  - C_F \int_0^t \|\nabla u_\lambda(s)\|_\b H^2 \: \d s
    \end{split}
\end{equation}
by convexity of $\Psi_\lambda$ and the fact that $F'$ is well-defined almost everywhere and bounded, as a  consequence of Assumption \ref{hyp:F} and Rademacher's theorem. On the right hand side, \andrea{arguing similarly and owing to Lemma \ref{lem:properties}} we have
\andrea{
\begin{equation} \label{eq:uniform23}
\begin{split}
    & \dfrac{1}{2}\int_0^t \sum_{k=1}^{+\infty} (\Phi''_\lambda(u_\lambda(s)), |G_\lambda(u_\lambda(s))[e_k]|^2)_H \: \d s \\
    & \hspace{2cm} \leq C\left( \int_0^t \sum_{k=1}^{+\infty} \|\sqrt{\Psi''_\lambda(u_\lambda(s))}|G_\lambda(u_\lambda(s))[e_k]|\|_H^2 \: \d s + \int_0^t\|G_\lambda(u_\lambda(s))\|^2_{\cL^2(U,H)} \: \d s\right)\\
    & \hspace{2cm} \leq C\left(t + \int_0^t\|\Psi_\lambda(u_\lambda(s))\|_{L^1(\OO)}\: \d s + \int_0^t\|u_\lambda(s)\|^2_{H} \: \d s\right).
\end{split}
\end{equation}
Moreover, owing once again to Lemma \ref{lem:properties}
{\footnotesize
\begin{equation} \label{eq:uniform22-2}
    \begin{split}
        & \left| \int_0^t \int_Z \left[ \int_\OO \Phi_\lambda(u_\lambda(s^-) + J_\lambda(u_\lambda(s^-),z)) \: \d x - \int_\OO \Phi_\lambda(u_\lambda(s^-)) \: \d x  - \int_\OO \Phi_\lambda'(u_\lambda(s^-)) J_\lambda(u_\lambda(s^-),z)\: \d x \right] \: \nu(\d z) \: \d s \right| \\
        & \hspace{1cm} \leq C\left| \int_0^t \int_Z \int_\OO \int_0^1 \Phi_\lambda''(u_\lambda(s^-) + \theta J_\lambda(u_\lambda(s^-),z))|J_\lambda(u_\lambda(s^-),z)|^2 \: \d \theta \: \d x\: \nu(\d z) \: \d s \right| \\
        & \hspace{1cm} \leq C\left| \int_0^t \int_Z \int_\OO \int_0^1 \Psi''_\lambda(u_\lambda(s^-) + \theta J_\lambda(u_\lambda(s^-),z))|J_\lambda(u_\lambda(s^-),z)|^2 \: \d\theta\: \d x\: \nu(\d z) \: \d s \right| \\
        & \hspace{4cm} +  C\left| \int_0^t \int_Z \int_\OO \int_0^1F'(u_\lambda(s^-) + \theta J_\lambda(u_\lambda(s^-),z))|J_\lambda(u_\lambda(s^-),z)|^2 \:\d\theta \: \d x\: \nu(\d z) \: \d s \right| \\
        & \hspace{1cm} \leq C\left| \int_0^t \int_Z \int_\OO \int_0^1 \Psi''_\lambda(u_\lambda(s^-) + \theta J_\lambda(u_\lambda(s^-),z))|J_\lambda(u_\lambda(s^-),z)|^2 \: \d \theta \: \d x\: \nu(\d z) \: \d s \right| \\
        & \hspace{4cm}+ C\left| \int_0^t \int_Z \int_\OO |J_\lambda(u_\lambda(s^-),z)|^2 \: \d x\: \nu(\d z) \: \d s \right|.
    \end{split}
\end{equation}}Without loss of generality, we shall} assume that $L$ is finite. \andrea{The proof in the case $L=+\infty$ is analogous. It is clear how to handle the second term in \eqref{eq:uniform22-2}. For the first one, we invoke Lemma \ref{lem:properties} once more. Indeed, observe that
\[
\begin{split}
    u_\lambda(s^-) +\theta J_\lambda(u_\lambda(s^-),z) = (1-\theta)u_\lambda(s^-)+\theta[u_\lambda(s^-)+J_\lambda(u_\lambda(s^-),z)]
\end{split}
\]
for any $\theta \in [0,1]$, and in turn, Lemma \ref{lem:properties} implies there exists a constant $\sigma \in [0,1]$ such that
\[
u_\lambda(s^-) + \theta J_\lambda(u_\lambda(s^-),z) = (1-\theta)u_\lambda(s^-)+ \sigma\theta \delta_JR_\lambda(u_\lambda(s^-)) + (1-\sigma)\theta\left[(1-\delta_J)L + \delta_J R_\lambda(u_\lambda(s^-))\right]
\]
and using the convexity of $\Psi''_\lambda$ we eventually arrive at 
\begin{multline*}
    \left| \int_0^t \int_Z \int_\OO \int_0^1 \Psi''_\lambda(u_\lambda(s^-) + \theta J_\lambda(u_\lambda(s^-),z))|J_\lambda(u_\lambda(s^-),z)|^2 \: \d \theta \: \d x\: \nu(\d z) \: \d s \right|  \\ \leq \left| \int_0^t \int_Z \int_\OO \Psi''_\lambda( u_\lambda(s^-))|J_\lambda(u_\lambda(s^-),z)|^2 \: \d x\: \nu(\d z) \: \d s \right| \\ +\left| \int_0^t \int_Z \int_\OO \Psi''_\lambda(\delta_J R_\lambda(u_\lambda(s^-)))|J_\lambda(u_\lambda(s^-),z)|^2 \: \d x\: \nu(\d z) \: \d s \right| \\ + \left| \int_0^t \int_Z \int_\OO \Psi''_\lambda((1-\delta_J)L + \delta_J R_\lambda(u_\lambda(s^-)))|J_\lambda(u_\lambda(s^-),z)|^2 \: \d x\: \nu(\d z) \: \d s \right|
\end{multline*}
The last two terms can be controlled directly by means of Lemma \ref{lem:properties}. As for the first one, recall that
\[
\delta_JR_\lambda(v) \leq R_\lambda(v) = (1-\delta_J)R_\lambda(v)+\delta_JR_\lambda(v) \leq (1-\delta_J)L+\delta_JR_\lambda(v)
\]
as the range of $R_\lambda$ lies in the effective domain of $\Psi'$, namely $(0,L)$. Since, by definition of Yosida approximation, $\Psi''_\lambda \leq \Psi''\circ R_\lambda$ and $\Psi''$ is convex, we deduce that
\[ \small
\begin{split}
    & \left| \int_0^t \int_Z \int_\OO \Psi''_\lambda( u_\lambda(s^-))|J_\lambda(u_\lambda(s^-),z)|^2 \: \d x\: \nu(\d z) \: \d s \right| \\
    & \hspace{0.3cm}\leq C\left[ \left| \int_0^t \int_Z \int_\OO \Psi''_\lambda( u_\lambda(s^-))|J(R_\lambda(u_\lambda(s^-)),z)|^2 \: \d x\: \nu(\d z) \: \d s \right| + \lambda^2 \left| \int_0^t  \int_\OO \Psi''_\lambda( u_\lambda(s^-))|\Psi'_\lambda(u_\lambda(s^-))|^2 \: \d x \: \d s \right|\right] \\
    & \hspace{0.3cm}\leq C\left[\left| \int_0^t \int_Z \int_\OO \Psi''( R_\lambda(u_\lambda(s^-)))|J(R_\lambda(u_\lambda(s^-)),z)|^2 \: \d x\: \nu(\d z) \: \d s \right|+ \lambda \left| \int_0^t  \int_\OO |\Psi'_\lambda(u_\lambda(s^-))|^2 \: \d x \: \d s \right| \right]\\
    & \hspace{0.3cm} \leq C\left[ \left| \int_0^t \int_Z \int_\OO \Psi''(\delta_J R_\lambda(u_\lambda(s^-)))|J(R_\lambda(u_\lambda(s^-)),z)|^2 \: \d x\: \nu(\d z) \: \d s \right| \right. \\
    & \hspace{0.6cm} \left. + \left| \int_0^t \int_Z \int_\OO \Psi''((1-\delta_J)L + \delta_J R_\lambda(u_\lambda(s^-)))|J(R_\lambda(u_\lambda(s^-)),z)|^2 \: \d x\: \nu(\d z) \: \d s \right| + \lambda \left| \int_0^t  \int_\OO |\Psi'_\lambda(u_\lambda(s^-))|^2 \: \d x \: \d s \right| \right]
\end{split}
\]
On account of all of the above, we conclude by Lemma \ref{lem:properties} and Assumption \ref{hyp:J} that
\begin{multline*}
    \left| \int_0^t \int_Z \int_\OO \int_0^1 \Psi''_\lambda(u_\lambda(s^-) + \theta J_\lambda(u_\lambda(s^-),z))|J_\lambda(u_\lambda(s^-),z)|^2 \: \d\theta\: \d x\: \nu(\d z) \: \d s \right| \\  \leq C\left[1+\int_0^t\|\Psi_\lambda(u_\lambda(s^-))\|_{L^1(\OO)} \: \d s + \lambda\int_0^t\|\Psi'_\lambda(u_\lambda(s^-))\|^2_H \: \d s\right].
\end{multline*}
}Collecting the above estimates in \eqref{eq:uniform21}, raising the result to the power $\frac r2$, \andrea{with $r = \min \{p,4\}$}, taking supremums in time and $\P$-expectations, we get
{\footnotesize
\begin{multline} \label{eq:uniform24}
    \E \supp \left| \int_\OO \Phi_\lambda(u_\lambda(t))\: \d x \right|^\frac r2 + \andrea{(1-C\lambda)^\frac r2 \E \left| \int_0^t \|\Phi'_\lambda(u_\lambda(s))\|^2_H \: \d s \right|^\frac r2} \\
    \leq 6^\frac r2 C^\frac r2 \left[ 1 + \E \left| \int_\OO \Phi_\lambda(u_\lambda(0))\: \d x\right|^\frac r2 + \E \left| \int_\OO \| \nabla u_\lambda(s)\|^2_{\b H} \: \d s\right|^\frac r2 + \E \left| \int_0^t \|\Phi_\lambda(u_\lambda(s))\|_{L^1(\OO)}  \: \d s\right|^\frac r2 \right. \\ \left. + \E \supp \left| \int_0^\tau (\Phi'_\lambda(u_\lambda(s)), G_\lambda(u_\lambda(s)))_H \: \d W(s) \right|^\frac r2
    + \E \supp \left| \int_0^\tau \int_Z (\Phi_\lambda'(u_\lambda(s^-)), J_\lambda(u_\lambda(s^-),z))_H\: \overline{\mu}(\d t,\, \d z) \right|^\frac r2\right],
\end{multline}}where we reconstructed $\Phi_\lambda$ from $\Psi_\lambda$ by algebraic manipulations and using the \textit{First estimate} (recall that any primitive of $F$ is quadratically bounded). \andrea{In \eqref{eq:uniform24}, we kept $C$ independent of both $\lambda$ and $r$ for better clarity. The value of $C > 0$ is then the same on both the left and the right hand side.} Finally, we need to control the stochastic terms, following the same strategy employed in the \textit{First estimate,} namely
\begin{equation} \label{eq:stoch1}
\begin{split}
    & \E \supp \left| \int_0^\tau  \left(\Phi'_\lambda(u_\lambda(s)), G_\lambda(u_\lambda(s)) \right)_H \: \d W(s) \right|^\frac r2 \\ & \hspace{1cm}  \leq C\E  \left| \int_0^t \|\Phi'_\lambda(u_\lambda(s))\|^2_H \|G_\lambda(u_\lambda(s))\|^2_{\cL^2(U, H)} \: \d s \right|^\frac r4 \\ 
    & \hspace{1cm} \leq C\E  \left[ \supp \|G_\lambda(u_\lambda(\tau))\|^\frac r2_{\cL^2(U, H)} \left| \int_0^t  \|\Phi'_\lambda(u_\lambda(s))\|^2_H  \: \d s \right|^\frac r4 \right]\\
    & \hspace{1cm} \leq C\E \left[ \supp \left( 1 + \|u_\lambda(\tau)\|_H^\frac r2\right)\left| \int_0^t  \|\Phi'_\lambda(u_\lambda(s))\|^2_H  \: \d s \right|^\frac r4 \right] \\
    & \hspace{1cm} \leq \andrea{\sigma} \E \left| \int_0^t  \|\Phi'_\lambda(u_\lambda(s))\|^2_H  \: \d s \right|^\frac r2 + C\left( 1 +  \E \supp \|u_\lambda(\tau)\|^r_H \right),
\end{split}
\end{equation}
where $\andrea{\sigma} > 0$ is arbitrarily small (but fixed). Analogously, \andrea{owing still to the fact that $r \leq 4$}, we have
\begin{equation} \label{eq:stoch2}
    \begin{split}
    & \E \supp \left| \int_0^\tau \int_Z (\Phi_\lambda'(u_\lambda(s^-)), J_\lambda(u_\lambda(s^-),z))_H\: \overline{\mu}(\d s,\, \d z) \right|^\frac r2\\ 
    & \hspace{1cm}  \leq C\E  \left| \int_0^t \|\Phi_\lambda'(u_\lambda(s^-))\|^2_H \int_Z \|J_\lambda(u_\lambda(s^-),z)\|^2_{H} \:  \nu(\d z) \: \d s \right|^\frac r4 \\ 
    & \hspace{1cm} \leq C\E  \left[ \supp \left| \int_Z \|J_\lambda(u_\lambda(\tau^-),z)\|^2_{H} \:  \nu(\d z)\right|^\frac r4\left|\int_0^t  \|\Phi'_\lambda(u_\lambda(s))\|^2_H  \: \d s\right|^\frac r4 \right]\\
    & \hspace{1cm} \leq \andrea{\sigma} \E \left| \int_0^t  \|\Phi'_\lambda(u_\lambda(s))\|^2_H  \: \d s \right|^\frac r2 + C\left( 1 +  \E \supp \|u_\lambda(\tau)\|^r_H \right).
\end{split}
\end{equation}
\andrea{Let us remark that, in all Burkholder--Davis--Gundy-type estimates for jump terms}, in order to \andrea{allow for} $r > 4$, \andrea{controls} in $L^p(\Omega; L^p(0,T;H))$ \andrea{are generally} needed. \andrea{In particular, this would require some strengthened form of Assumption \ref{hyp:J}}. In the above computations, we used the properties of $G_\lambda$ and $J_\lambda$ given by Lemma \ref{lem:properties}.
Consequently, \andrea{for sufficiently small values of $\sigma$ and $\lambda$}, \eqref{eq:stoch1} and \eqref{eq:stoch2} jointly with \eqref{eq:uniform24} imply that
\begin{multline*}
    \E \supp \left| \int_\OO \Phi_\lambda(u_\lambda(t))\: \d x \right|^\frac r2 + \E \left| \int_0^t \|\Phi'_\lambda(u_\lambda(s))\|^2_H \: \d s \right|^\frac r2 \\
    \leq C \left[ 1 + \E \left| \int_\OO \Phi_\lambda(u_\lambda(0))\: \d x\right|^\frac r2 +\E \left| \int_0^t \|\Phi_\lambda(u_\lambda(s))\|_{L^1(\OO)}  \: \d s\right|^\frac r2 \right],
\end{multline*}
owing also to the \textit{First estimate.} The Gronwall lemma then implies that there exists a constant $C_2 > 0$ independent of $\lambda$ such that
\[
\|\Phi'_\lambda(u_\lambda))\|_{L^\andrea{r}(\Omega;L^2(0,T;H))} \leq C_2.
\]
It is easy to check by comparison that 
\[
\|\Psi'_\lambda(u_\lambda))\|_{L^\andrea{r}(\Omega;L^2(0,T;H))} \leq C_2
\]
for a possibly different constant that we do not relabel.
\paragraph{\textit{Third estimate.}} The final estimate aims to show that the sequence $\{u_\lambda\}_{\lambda > 0}$ is a Cauchy sequence in \andrea{the space
\[
L^r(\Omega;L^\infty(0,T;H)) \cap L^r(\Omega;L^2(0,T;V))
\]
where $r = \min \{4,p\}$}, along values of $\lambda$ that converge to 0. Therefore, it must be convergent to some limit. This estimate follows closely the computations done in the \textit{First estimate} (see also Subsection \ref{ssec:uniq}) and exploits the resolvent identity to get a dependence on $\lambda$. Let $\lambda_1 > 0$ and let $\lambda_2 > 0$ such that $\lambda_1 > \lambda_2$. Applying the It\^{o} lemma to the $H$-norm of $\utwo -\uone$ we get
\begin{multline} \label{eq:uniform31}
     \dfrac{1}{2}\|\uone(t)-\utwo(t)\|^2_H + \int_0^t \|\nabla \uone(s)-\nabla \utwo(s)\|^2_{\b H} \: \d s \\  = - \int_0^t \left(\uone(s)-\utwo(s), \Psi_{\lambda_1}'(\uone(s))-\Psi'_{\lambda_2}(\utwo(s))\right)_H \: \d s \\- \int_0^t \left(\uone(s)-\utwo(s),\, F(\uone(s))-F(\utwo(s))\right)_H \: \d s \\
     + \int_0^t \left(\uone(s)-\utwo(s), (G_{\lambda_1}(\uone(s))-G_{\lambda_2}(\utwo(s))) \: \d W(s) \right)_H  \\ + \dfrac{1}{2} \int_0^t \|G_{\lambda_1}(\uone(s))-G_{\lambda_2}(\utwo(s))\|^2_{\mathscr L^2(U, H)} \: \d s \\
     + \int_0^t \int_Z (\uone(s^-)-\utwo(s^-), J_{\lambda_1}(\uone(s^-),z) - J_{\lambda_2}(\utwo(s^-),z))_H\: \overline{\mu}(\d s,\, \d z) \\
     + \int_0^t \int_Z \big[ \|\uone(s^-)-\utwo(s^-) + J_{\lambda_1}(\uone(s^-),z) - J_{\lambda_2}(\utwo(s^-),z)\|^2_H \\ -\|\uone(s^-)-\utwo(s^-)\|^2_H - 2(\uone(s)-\utwo(s), J_{\lambda_1}(\uone(s^-),z) - J_{\lambda_2}(\utwo(s^-),z))_H \big] \: \nu(\d z) \: \d s.
\end{multline}
Recalling the resolvent identity
\[
\lambda\Psi_\lambda'(s) + R_\lambda(s) = s
\]
for all $s \in \mathbb R$ and $\lambda > 0$, we observe that
\[
\begin{split}
    &\left(\uone(s)-\utwo(s), \Psi'_{\marghe{\lambda_1}}(\uone(s))-\Psi'_{\marghe{\lambda_2}}(\utwo(s))\right)_H \\
    & \hspace{1cm}= \left(\lambda_1\Psi'_{\lambda_1}(\uone(s)) - \lambda_2 \Psi'_{\lambda_2}(\utwo(s)), \Psi_{\lambda_1}'(\uone(s))-\Psi'_{\lambda_2}(\utwo(s))\right)_H \\
    & \hspace{2cm} + \left(R_{\lambda_1}(\uone(s)) - R_{\lambda_2}(\utwo(s)), \Psi_{\lambda_1}'(\uone(s))-\Psi'_{\lambda_2}(\utwo(s))\right)_H \\
    & \hspace{1cm}\geq -(\lambda_1+\lambda_2)(\Psi_{\lambda_1}'(\uone(s)),\,\Psi'_{\lambda_2}(\utwo(s)))_H \\
    & \hspace{1cm} \geq -\dfrac{\lambda_1+\lambda_2}{2}\|\Psi_{\lambda_1}'(\uone(s))\|^2 -\dfrac{\lambda_1+\lambda_2}{2}\|\Psi_{\lambda_2}'(\utwo(s))\|^2.
\end{split}  
\]
The second term, thanks to the Lipschitz continuity of $F$, satisfies
\[
\left| \left(\uone(s)-\utwo(s),\, F(\uone(s))-F(\utwo(s))\right)_H  \right| \leq C\|\uone(s)-\utwo(s)\|^2_H,
\]
with $C > 0$ independent of $\lambda$. Thanks to the Lipschitz continuity of $G$ and recalling that $G_\lambda = G \circ R_\lambda$, we get
\[
\|G_{\lambda_1}(\uone(s))-G_{\lambda_2}(\utwo(s))\|^2_{\mathscr L^2(U, H)} \leq \|R_{\lambda_1}(\uone(s))-R_{\lambda_2}(\utwo(s))\|^2_H
\]
and the only deterministic term left is the last one. By the same token, the jump term satisfies
\[
\begin{split}
    & \int_Z \big[ \|\uone(s^-)-\utwo(s^-) + J_{\lambda_1}(\uone(s^-),z) - J_{\lambda_2}(\utwo(s^-),z)\|^2_H \\ &\hspace{1cm }-\|\uone(s^-)-\utwo(s^-)\|^2_H - 2(\uone(s)-\utwo(s), J_{\lambda_1}(\uone(s^-),z) - J_{\lambda_2}(\utwo(s^-),z))_H \big] \: \nu(\d z) \\
    & = \int_Z\|J_{\lambda_1}(\uone(s^-),z) - J_{\lambda_2}(\utwo(s^-),z)\|^2 \: \nu(\d z) \\
    & \leq C\|R_{\lambda_1}(\uone(s^-))-R_{\lambda_2}(\utwo(s^-))\|^2_H.
\end{split}
\]
Employing the above estimates in \eqref{eq:uniform31}, raising to the power $\frac r2$, taking supremums in time and $\P$-expectations, we arrive at
\begin{multline} \label{eq:uniform32}
     \E \supp \|\uone(t)-\utwo(t)\|^r_H + \E\left| \int_0^t \|\nabla \uone(s)-\nabla \utwo(s)\|^2_{\b H} \: \d s \right|^\frac r2 \\  \leq C\left[\E \left| \int_0^t \|\uone(s)-\utwo(s)\|^2_H \: \d s \right|^\frac r2 + \E \left| \int_0^t \|R_{\lambda_1}(\uone(s))-R_{\lambda_2}(\utwo)\|^2_H \: \d s\right|^\frac r2 \right. \\ + \dfrac{\lambda_1+\lambda_2}{2}\E \left|\int_0^t\|\Psi'_{\lambda_1}(\uone(s))\|_H^2 \: \d s \right|^\frac r2  + \dfrac{\lambda_1+\lambda_2}{2}\E \left|\int_0^t\|\Psi'_{\lambda_2}(\utwo(s))\|_H^2 \: \d s \right|^\frac r2 \\ 
     + \E \supp \left| \int_0^\tau \left(\uone(s)-\utwo(s), (G_{\lambda_1}(\uone(s))-G_{\lambda_2}(\utwo(s))) \: \d W(s) \right)_H \right|^\frac r2  \\ \left.
     + \E \supp \left| \int_0^\tau \int_Z (\uone(s^-)-\utwo(s^-), J_{\lambda_1}(\uone(s^-),z) - J_{\lambda_2}(\utwo(s^-),z))_H\: \overline{\mu}(\d s,\, \d z) \right|^\frac r2 \right]
\end{multline}
For the stochastic terms in \eqref{eq:uniform32}, we follow the same computations of the \textit{First estimate} so that, eventually,
\begin{multline} \label{eq:uniform33}
     \E \supp \|\uone(t)-\utwo(t)\|^r_H + \E\left| \int_0^t \|\nabla \uone(s)-\nabla \utwo(s)\|^2_{\b H} \: \d s \right|^\frac r2 \\  \leq C\left[\E \left| \int_0^t \|\uone(s)-\utwo(s)\|^2_H \: \d s \right|^\frac r2 + \E \left| \int_0^t \|R_{\lambda_1}(\uone(s))-R_{\lambda_2}(\utwo)\|^2_H \: \d s\right|^\frac r2 \right. \\ \left. + \dfrac{\lambda_1+\lambda_2}{2}\E \left|\int_0^t\|\Psi'_{\lambda_1}(\uone(s))\|_H^2 \: \d s \right|^\frac r2  + \dfrac{\lambda_1+\lambda_2}{2}\E \left|\int_0^t\|\Psi'_{\lambda_2}(\utwo(s))\|_H^2 \: \d s \right|^\frac r2 \right]
\end{multline}
The last two expectations in \eqref{eq:uniform33} are uniformly bounded in $\lambda_1$ and $\lambda_2$. For the resolvent difference, observe that by summing and subtracting $R_{\lambda_1}(\utwo(s))$ 
\[
\begin{split}
     \|R_{\lambda_1}(\uone(s))-R_{\lambda_2}(\utwo(s))\|^2_H & \leq 2\|\uone(s)-\utwo(s)\|^2_H + 2\|R_{\lambda_1}(\utwo(s))-R_{\lambda_2}(\utwo(s))\|^2_H \\
     & \leq C\left( \|\uone(s)-\utwo(s)\|^2_H + \|\marghe{\lambda_1}\Psi'_{\lambda_1}(\utwo(s))-\marghe{\lambda_2}\Psi_{\lambda_2}'(\utwo(s))\|_H^2\right) \\
     & \leq C\left( \|\uone(s)-\utwo(s)\|^2_H + \lambda_1^2\|\Psi'_{\lambda_2}(\utwo(s))\|_H^2+\lambda_2^2\|\Psi'_{\lambda_2}(\utwo(s))\|^2_H\right)
\end{split}
\]
where we used the fact that the Yosida approximation converges monotonically from below as $\lambda \to 0^+$ (recall that $\lambda_1 > \lambda_2$). Then, we arrive at
\begin{multline} \label{eq:uniform34}
     \E \supp \|\uone(t)-\utwo(t)\|^r_H + \E\left| \int_0^t \|\nabla \uone(s)-\nabla \utwo(s)\|^2_{\b H} \: \d s \right|^\frac r2 \\  \leq C\left[\E \left| \int_0^t \|\uone(s)-\utwo(s)\|^2_H \: \d s \right|^\frac r2 +\lambda_1 + \lambda_2 + \lambda_1^2+\lambda_2^2\right]
\end{multline}
The Gronwall lemma implies the result, taking the limits $\lambda_1 \to 0^+$ and $\lambda_2 \to 0^+$.
\subsection{Passage to the limit $\lambda \to 0^+$.} In this section, we infer convergence properties from the estimates shown in the previous Subsection. All convergences illustrated hereafter are always intended as $\lambda \to 0^+$ along an arbitrary decreasing sequence. First of all, since $\{u_\lambda\}_{\lambda > 0}$ is a Cauchy sequence in a Banach space, we conclude that
\[
u_\lambda \to u \quad \text{in }L^r(\Omega;L^\infty(0,T;H)) \cap L^r(\Omega;L^2(0,T;V))
\]
\andrea{where $r = \min \{4,p\}$.} By reflexivity and the uniform bound given in the \textit{Second estimate}, we have
\[
\Phi'_\lambda(u_\lambda) \rightharpoonup \xi \quad \text{in } L^r(\Omega;L^2(0,T;H)).
\]
It is our aim now to prove that $\xi = \Psi'(u) + F(u)$. Observe that since $F$ is a Lipschitz-continuous function we immediately achieve that
\[
\E \left| \int_0^t \|F(u_\lambda(s)) - F(u(s))\|^2_H \: \d s\right|^\frac r2 \leq C\E \left| \int_0^t \|u_\lambda(s)) - u(s)\|^2_H \: \d s\right|^\frac r2
\]
giving
\[
F(u_\lambda) \to F(u) \quad \text{in } L^r(\Omega;L^2(0,T;H)).
\]
In particular, then, by comparison it is immediate to deduce that
\[
\Psi'_\lambda(u_\lambda) \rightharpoonup \xi - F(u) \quad \text{in } L^r(\Omega;L^2(0,T;H)).
\]
Since $\Psi'_\lambda = \Psi' \circ R_\lambda$, we now study the convergence of the sequence $R_\lambda(u_\lambda)$. The triangle inequality and the resolvent identity give
\[
\begin{split}
    \|R_\lambda(u_\lambda(s)) - u(s)\|^2_H & \leq 2\|R_\lambda(u_\lambda(s))-u_\lambda(s)\|^2_H + 2\|u_\lambda(s)-u(s)\|^2_H \\
    & = 2\lambda^2\|\Psi'_\lambda(u_\lambda(s))\|^2_H + 2\|u_\lambda(s)-u(s)\|^2_H
\end{split}
\]
implying that
\[
R_\lambda(u_\lambda) \to u \quad \text{in } L^r(\Omega;L^2(0,T;H)),
\]
thanks to the \textit{Second estimate}.
Now, observe that
\[
\E\int_0^t(\Psi'(R_\lambda(u_\lambda(s)))-\Psi'(y(s)), R_\lambda(u_\lambda(s))-y(s))_H \: \d s\geq 0
\]
for all $y \in L^2(\Omega; L^2(0,T;H))$ such that $\Psi'(y) \in L^2(\Omega; L^2(0,T;H))$ and $\Psi'$ is maximal monotone. Passing to the limit yields
\[
(\xi - F(u)-\Psi'(y), u-y)_{L^2(\Omega;L^2(0,T;H)} \geq 0
\]
and maximality implies $\xi -F(u)= \Psi'(u)$, as claimed. Observe, that, in turn, this implies that
\[
u \in V \cap \Gamma \quad \text{a.e. in }\Omega \times \OO \times (0,T)
\]Finally, we are left to deal with the stochastic integrals. Let us start from the It\^{o} term. The key tool is, once again, the Burkholder-Davis-Gundy inequality, recalling that $G_\lambda = G \circ R_\lambda$. Indeed,
\[
\begin{split}
    & \E \supp \left\| \int_0^\tau G(R_\lambda(u_\lambda(s))) - G(u(s)) \: \d W(s) \right\|_H^r  \\
    & \hspace{2cm}\leq C\E\left|\int_0^t \| G(R_\lambda(u_\lambda(s))) - G(u(s))\|^2_{\cL^2(U,H)} \: \d s\right|^\frac r2 \\
    & \hspace{2cm} \leq C\E\left|\int_0^t \| R_\lambda(u_\lambda(s)) - u(s)\|^2_H \: \d s\right|^\frac r2 \\
    & \hspace{2cm}\leq C\left[ \lambda^2 \E \left| \int_0^t \|\Psi'_\lambda(u_\lambda(s))\|^2_H \: \d s \right|^\frac r2 + \E \left| \int_0^t \|u_\lambda(s)-u(s)\|^2_H \: \d s \right|^\frac r2\right]
\end{split}
\]
giving
\[
G_\lambda(u_\lambda)  \cdot W\to G(u) \cdot  W\quad \text{in } L^r(\Omega;L^\infty(0,T;H)).
\]
A similar argument works for the jump term, as, since $r \leq 4$, the Burkholder-Davis-Gundy inequality works similarly. Therefore,
\[
\int_Z J_\lambda(u_\lambda^-, z) \: \overline \mu(\d t, \d z) \to \int_Z J(u^-,z)\:\overline
\mu(\d t, \d z) \quad \text{in } L^r(\Omega;L^\infty(0,T;H)).
\]
The proven convergences are also enough to infer that the limit process $u$ is indeed a probabilistically-strong solution to \eqref{eq:strongform}. The proof of existence is complete.
\subsection{Uniqueness of solutions.}
\label{ssec:uniq}
Finally, we prove that the solution to problem \eqref{eq:strongform} is pathwise unique. This is, essentially, a third iteration of the computations shown in Subsection \ref{ssec:approximation}. In particular, let $u_{01}$ and $u_{02}$ be elements of $L^p(\Omega; \andrea{H})$ complying with the assumptions of Theorem \ref{thm:wellposed}. For $i \in \{1,2\}$, let $u_i$ denote the solution to \eqref{eq:strongform} having $u_{0i}$ as initial state. Setting
\[
u := u_1 - u_2, \qquad u_0 := u_{01} - u_{02}
\]
we have that formally
\begin{equation} \label{eq:uniqueness1} \small
		\begin{cases}
			\d u - \andrea{\eps}\Delta u  \: \d t
             = 
            -\left[\Psi'(u_1)-\Psi'(u_2)\right]\:\d t - \left[ F(u_1) - F(u_2)\right]\:\d t \\
            \qquad +  \left[ G(u_1) - G(u_2) \right] \: \d W + \displaystyle \int_Z \left[ J(u^-_1,z) - J(u^-_2,z) \right] \: \overline{\mu}(\d t,\, \d z) & \quad \text{in }  (0,T) \times\OO, \\
			\alpha_d u + \alpha_n \partial_{\bn} u = 0 & \quad \text{on } (0,T) \times \partial \OO, \\
			u(\cdot, 0) = u_0 & \quad \text{in } \OO.
		\end{cases}
\end{equation}
For simplicity, in the following we set without loss of generality $\alpha_d = 0$ and $\alpha_n = 1$. Arguing as in Subsection \ref{ssec:approximation}, we apply the It\^{o} lemma to the functional
\[
v \mapsto \dfrac 12 \|v\|^2_H
\]
evaluated at $v = u(t)$ for any $t > 0$. This yields
\begin{multline} \label{eq:unique1}
     \dfrac{1}{2}\|u(t)\|^2_H + \andrea{\eps}\int_0^t \|\nabla u(s)\|^2_{\b H} \: \d s \\  = \frac12\|u_{01}-u_{02}\|^2_H- \int_0^t \left(u(s), \Psi'(u_1(s))-\Psi'(u_2(s))\right)_H \: \d s - \int_0^t \left(u(s), F(u_1(s))-F(u_2(s))\right)_H \: \d s \\
     + \int_0^t \left(u(s), (G(u_1(s))-G(u_2(s))) \: \d W(s) \right)_H  + \dfrac{1}{2} \int_0^t \|G(u_1(s))-G(u_2(s))\|^2_{\mathscr L^2(U, H)} \: \d s \\
     + \int_0^t \int_Z (u(s^-), J(u_1(s^-),z) - J(u_2(s^-),z))_H\: \overline{\mu}(\d s,\, \d z) \\
     + \int_0^t \int_Z \big[ \|u(s^-) + J(u_1(s^-),z) - J(u_2(s^-),z)\|^2_H \\ -\|u(s^-)\|^2_H - 2(u(s), J(u_1(s^-),z) - J(u_2(s^-),z))_H \big] \: \nu(\d z) \: \d s.
\end{multline}
In the same fashion, we employ all the known assumptions as well as the convexity properties of $\Psi$ to deduce that
\begin{equation} \label{eq:unique2}
    \int_0^t \left(u(s), \Psi'(u_1(s))-\Psi'(u_2(s))\right)_H \: \d s \geq 0
\end{equation}
as well as 
\begin{equation} \label{eq:unique22}
    \left| \int_0^t \left(u(s), F(u_1(s))-F(u_2(s))\right)_H \: \d s \right| \leq C \int_0^t \|u(s)\|^2_H \: \d s
\end{equation}
and
\begin{equation} \label{eq:unique3}
     \dfrac{1}{2} \int_0^t \|G(u_1(s))-G(u_2(s))\|^2_{\mathscr L^2(U, H)} \: \d s \leq C\int_0^t \|u(s)\|^2_H \: \d s.
\end{equation}
Once again, for the last term of \eqref{eq:unique1} we have
\begin{equation} \label{eq:unique4}
    \begin{split}
        & \int_0^t \int_Z \big[ \|u(s^-) + J(u_1(s^-),z) - J(u_2(s^-),z)\|^2_H \\ & \hspace{2cm} -\|u(s^-)\|^2_H - (u(s^-), J(u_1(s^-),z) - J(u_2(s^-),z))_H \big] \: \nu(\d z) \: \d s \\
        & \hspace{1cm} = \int_0^t \int_Z \|J(u_1(s^-),z) - J(u_2(s^-),z)\|^2_H \: \nu(\d z) \: \d s \\
        &  \hspace{1cm}\leq C\int_0^t\|u(s)\|^2_H \: \d s.
    \end{split}
\end{equation}
Therefore, collecting \eqref{eq:unique2}-\eqref{eq:unique4} in \eqref{eq:unique1}, taking $\frac r2$-th powers, \andrea{with $r = \min\{4,p\}$}, supremums in time and $\P$-expectations, we get
\begin{multline} \label{eq:unique5}
     \E \supp\|u(\tau)\|^r_H + \E \left| \int_0^t \|\nabla u(s)\|^2_{\b H} \: \d s \right|^\frac r2 \leq C\E \left| \int_0^t \|u(s)\|^2\: \d s\right|^\frac r2 \\ 
     + \E \supp \left|\int_0^\tau \left(u(s), G(u_1(s))-G(u_2(s)) \right)_H \: \d W(s) \right|^\frac r2
     \\+ \E \supp \left| \int_0^t \int_Z (u(s^-), J(u_1(s^-),z) - J(u_2(s^-),z))_H\: \overline{\mu}(\d t,\, \d z)\right|^\frac r2.
\end{multline}
On account of the computations in \eqref{eq:fp21} and \eqref{eq:fp22}, we can conclude by the Gronwall lemma that
\[
\|u_1 - u_2\|_{L^r(\Omega; L^\infty(0,T;H) \cap L^2(0,T;V))} \leq C\|u_{01}-u_{02}\|_{L^r(\Omega;H)}
\]
and the claim follows setting $u_{01} = u_{02}$ $\P$-almost surely.
\section{Proof of Theorem \ref{thm:invariant}}
\label{sec:invariant}
\andrea{This section is devoted to showing Theorem \ref{thm:invariant}, i.e., to performing a long-time analysis of the stochastic equation \eqref{eq:strongform} in terms of existence and uniqueness of invariant measures and mixing properties. As we shall see, it will be useful to introduce some \textit{ad-hoc} notation that we illustrate hereafter.}
\subsection{Preliminaries}
\andrea{First, we consider} the set
\[
\mathcal{A} := \{v \in H\ : \Psi(v) \in L^1(\OO)\}=\bigcup_{n \in \mathbb{N}}\left\{ v \in H\ : \int_{\OO}\Psi(v) \le n \right\},
\]
which implies that $\mathcal{A}$ is a convex Borel subset of $H$ since $\Psi$ is convex and lower semicontinuous. 
\andrea{Let us endow the set $\mathcal{A}$ with the structure of metric space inherited by the space $H$. The resulting separable metric space will be denoted by $(\mathcal A, \mathsf d)$.} 
\andrea{The symbol} $\mathscr{B}(\mathcal{A})$ \andrea{denotes} the $\sigma$-algebra of all Borel subsets of $\mathcal{A}$; \andrea{moreover,} for a set $A \in \mathscr{B}(\mathcal{A})$, we denote by $A^c$ its complement. By $\mathscr{P}(\mathcal{A})$ we denote the set of all probability measures on $(\mathcal{A}, \mathscr{B}(\mathcal{A}))$. Moreover, $\mathcal{B}_b(\mathcal{A})$ and $\mathcal{C}_b(\mathcal{A})$ will stand for the space of all measurable bounded functions from $\mathcal{A}$ to $\mathbb{R}$ and the space of continuous bounded functions from $\mathcal{A}$ to $\mathbb{R}$, respectively. \andrea{Instead, by $\operatorname{Lip}(\A)$ we denote the space of Lipschitz continuous functions from $\A$ to $\mathbb R$, endowed with its classical norm $\|\cdot\|_{\mathrm{Lip}}$.} We \andrea{are now in a position} to rigorously introduce the family of transition operators  associated to equation \eqref{eq:strongform}. 
For every $x \in \mathcal{A}$, we denote by $u^x$ the unique solution to \eqref{eq:strongform} with initial datum $x$. \andrea{Accordingly,} for every $t\ge 0$, we set $u^x(t):=u(t\andrea{,\,}x)$ \andrea{to denote its} value at time $t$.
\andrea{Observe that for all $x \in \mathcal A$, Theorem \ref{thm:wellposed} establishes} that $u^x(t) \in \mathcal{A}$ for every $t>0$, $\mathbb{P}$-a\andrea{lmost surely}, and that $u^x(t)$ is $\mathcal{F}_t/ \mathscr{B}(\mathcal{A})$-measurable. Hence, given $t>0$, the law of $u^x(t)$ on $\mathcal{A}$ is defined as the pushforward measure 
\[
\cL_t: \mathcal{A} \times \mathscr{B}(\mathcal{A}) \rightarrow [0,1], \qquad \cL_t(x, A)=\operatorname{Law}_{\mathbb{P}}(u^x(t))\andrea{(A)}=\mathbb{P}(u^x(t) \in A), \qquad \forall \:x \in \mathcal{A}, \ A \in \mathscr{B}(\mathcal{A}).
\]
\luca{For every $A\in\cB(\mathcal A)$
one has that
the map $L^2(\Omega,\cF_t; H)\to [0,1]$ given
by $X\mapsto\operatorname{Law}_{\P}(X)(A)$, $X\in L^2(\Omega, \cF_t; H)$, is measurable: this can be proved 
by density arguments, by approximating $\mathbbm{1}_A$
through Lipschitz-continuous functions and by the dominated convergence theorem.
Hence, recalling that 
$x\mapsto u^x(t)$ is continuous from $\mathcal A$ to $X\in L^2(\Omega, \cF_t; H)$, we have that the map $x \mapsto \cL_t(x,A)$ is $\mathscr{B}(\mathcal{A})$-measurable for any $A \in \mathscr{B}(\mathcal{A})$.} This implies that the \andrea{map} $\cL_t$ is a transition kernel from $(\mathcal{A}, \mathscr{B}(\mathcal{A}))$ into itself \andrea{for every $t \ge 0$}. Hence, we can define the family of operators $P=(P_t)_{t \ge 0}$ as 
\[
P_t:\mathcal{B}_b(\mathcal{A}) \rightarrow \mathcal{B}_b(\mathcal{A}), \qquad (P_t\varphi)(x):=\int_{\mathcal{A}}\varphi(y)\, \cL_t(x,{\rm d}y)=\mathbb{E}[\varphi(u^x(t))], \qquad \forall \:x \in \mathcal{A},\ \varphi \in \mathcal{B}_b(\mathcal{A}).
\]
\andrea{The family of operators $P$ is well defined: indeed,} $P_t\varphi \in \mathcal{B}_b(\mathcal{A})$ for every $\varphi \in \mathcal{B}_b(\mathcal{A})$ since $\cL_t$ is a transition kernel.
\andrea{Let us point out once more} that, due to the nonlinear nature of the problem, the solution of equation \eqref{eq:strongform} \andrea{belongs to $\mathcal A$}. \andrea{Therefore,} the transition semigroup can only make sense as a family of operators acting on $\mathcal{B}_b(\mathcal{A})$ and not on $\mathcal{B}_b(H)$ as in the more classical cases. Furthermore, thanks to \cite[Theorem 9.30]{pes-zab} one has that the unique solution of \eqref{eq:strongform} is \andrea{also} a Markov process. 
Therefore, we deduce that the family of operators $\andrea{P}$ is a Markov semigroup, 
namely $P_{t+s}=P_tP_s$ for every $s$ and $t\ge0$. \andrea{The following definition precises finally the notion of invariant measure.}
\begin{defin} \label{def:invariant}
  An invariant measure for the transition semigroup $P$ is a probability measure $\gamma\in\cP(\mathcal{A})$ such that 
  \[
  \int_{\mathcal{A}} \varphi(x)\,\gamma(\d x) = \int_{\mathcal{A}} P_t\varphi(x)\,\gamma(\d x)
  \]
  for all $t\geq0$ and for all $\varphi\in \mathcal C_b(\mathcal{A})$.
\end{defin} \noindent
\andrea{Throughout this section, integration with respect to $x$ will typically denote integration over $\mathcal A$ with respect to some probability measure, instead of integration over the space domain $\OO$.}
\subsection{Existence of an invariant measure}
\andrea{First, we show that the transition semigroup $P$ admits indeed at least one invariant measure, in the sense of Definition \ref{def:invariant}}. The proof relies on an adaptation of the Krylov--Bogoliubov theorem to the case of complete separable metric spaces, see \cite[Theorem 3.3]{SZ}. \andrea{As customary when applying the} Krylov--Bogoliubov theorem, we need to \andrea{show} that the semigroup $P$ is Feller and the tightness property \andrea{for time-averaged laws}. \andrea{The Feller property} follows directly from 
the continuous dependence of the solution \andrea{from} the initial data. Indeed, let $t>0$ and $\varphi \in \mathcal{C}_b(\mathcal{A})$ be fixed.
Given a sequence $\{x_n\}_{n \in \enne} \subset \mathcal{A}$
which converges in $\mathcal{A}$ to
$x\in \mathcal{A}$ as $n \rightarrow +\infty$, \andrea{we need to prove that} the sequence $\{P_t\varphi(x_n)\}_{n \in \enne}$ converges to
$P_t\varphi(x) $ as $n \rightarrow +\infty$. \andrea{Owing to the continuous dependence estimate in Theorem \ref{thm:wellposed}}, we have that
\begin{equation*}
\|u^{x_n}(t)-u^x(t)\|_{L^2(\Omega; H)} \leq 
\|u^{x_n}-u^x\|_{L^2(\Omega; L^\infty(0,t;H))} \le C_t\|x_n-x\|_{H}.
\end{equation*}
It follows that, as $n\to+\infty$, for every $t \ge 0$, \andrea{the convergence} $u^{x_n}(t)\rightarrow u^x(t)$ \andrea{holds} in $L^2(\Omega;H)$, 
hence also in probability.
In turn, this implies that $\varphi(u^{x_n}(t))\rightarrow \varphi(u^x(t))$ in probability
by the continuity of $\varphi$. The boundedness of $\varphi$ and the Vitali theorem yield in particular that $\varphi(u^{x_n}(t))\rightarrow \varphi(u^x(t))$ \andrea{even} in $L^1(\Omega)$, and thus 
\begin{equation*}
|(P_t\varphi)(x_n)-(P_t \varphi)(x)| \le \mathbb{E}\left[\left |\varphi(u^{x_n}(t))- \varphi(u^x(t))\right| \right] \rightarrow 0,
\end{equation*}
as $n \rightarrow +\infty$. This shows that $P$ is Feller. \andrea{Next,} we prove that $P$ satisfies the tightness property \andrea{given, for instance, in} \cite[Theorem 3.3]{SZ}. To this end, we consider the solution to \eqref{eq:strongform} stemming from a fixed initial condition $x_0\in\mathcal A$.
Consider then the family of measures 
$(\gamma_t)_{t>0}\subset \cP(\mathcal{A})$ defined by 
\begin{equation*}
\andrea{\gamma_t:\mathscr B(\mathcal A) \to [0,1]}, \qquad A \mapsto \frac 1t \int_0^t (P_s\mathbbm{1}_{A})(x_0)\, {\rm d}s
=\frac 1t \int_0^t \mathbb{P}\left(u^{x_0}(t) \in A \right)\, {\rm d}s, \qquad \forall \: A \in \cB(\mathcal{A}), \ t>0.
\end{equation*}
\andrea{Let us prove that the family} is tight.
Let $B_n^V$ be the closed ball in $V$ of radius $n \in \mathbb{N}$ \andrea{and centered at the origin, and set $\widehat B_n^V:= B_n^V \cap \mathcal{A}$ to denote the part of the ball in $\mathcal A$.
Since the embedding $V \hookrightarrow H$ is compact, the set $\widehat B_n^V$ is a compact subset of $\mathcal{A}$. Exploiting} Lemma~\ref{lem:apriori1} and the Chebychev inequality we infer, for any $t\geq1$,
\begin{align*}
\gamma_t\left(\left(\widehat B_n^V\right)^c\right)
&=\frac 1t \int_0^t (P_s\mathbbm{1}_{\left(\widehat B_n^V\right)^c})(x_0)\, {\rm d}s=
\frac 1t \int_0^t \mathbb{P}\left(\|u^{x_0}(s)\|^2_V \ge n^2 \right)\, {\rm d}s
\\
&\le \frac{1}{tn^2}\int_0^t \mathbb{E}\left[\|u^{x_0}(s)\|^2_V\right]\, {\rm d}s
 \le \frac{C}{n^2},
\end{align*}
where $C$ is a positive constant depending on the parameters $\widetilde{C}_G,\, \widetilde{C}_J,\, C_1$ \andrea{and} $\OO$ in the case $L<+ \infty$ and on the parameters $\widetilde{C}_G,\, \widetilde{C}_J,\, C_1,\, C_0,\, \varepsilon$ \andrea{and} $K_2$ in the case $L=+ \infty$. \andrea{Choosing $n \in \enne$ sufficiently large yields the claim and completes the proof.}
\subsection{Support of invariant measures}
\label{ssec:inv_support}
\andrea{Having established the} existence of invariant measures, we focus 
here on some \andrea{of their} qualitative properties. \andrea{In particular, }we shall prove integrability properties \andrea{that} in turn provide information on their support, \andrea{which is contained in} a \andrea{strict subset of} $\A$. To this end we introduce the set 
\begin{equation}
  \label{A_sr}
  \A_{\text{str}}:=\left\{x\in \A \cap V: \ \Phi'(x)\in H\right\} \andrea{\subset \A}.
\end{equation}
\andrea{In the definition above, recall that $\Phi$ is defined as in Remark \ref{rem:Phi}}. Exploiting the lower semicontinuity of $|\Phi'|$, \andrea{it is possible to} show that $\A_{\text{str}}$ is a Borel subset of $H$, hence of $\A$. Let $\gamma \in \cP(\mathcal{A})$ be an invariant measure for the transition semigroup $P$. \andrea{For the sake of clarity, we divide the argument into four steps.}\\[0.3\baselineskip]
{\sc Step 1.}
\andrea{First, we show} that there exists a positive constant $C$ \andrea{independent of $\gamma$} such that 
\begin{equation}
\label{est_inv_meas_H}
\int_{\mathcal{A}} \|x\|^2_H \: \andrea{\gamma(\d x)} \le C
\end{equation}
\andrea{for any invariant measure $\gamma$. To this end,} we consider the mapping 
\[
\Theta:\mathcal{A} \rightarrow [0, + \infty), \qquad x \mapsto \|x\|^2_H, \qquad  \forall \: x \in \mathcal{A}
\]
and its approximations $\{\Theta_n\}_{n\in\enne}$, defined for every $n\in\enne$ as
\[
\Theta_n:\mathcal{A}\to[0,n^2], \qquad 
\Theta_n:x \mapsto 
\begin{cases}
\|x\|^2_H & \text{if} \ x \in B_n^H \cap \mathcal{A},
\\
n^2 & \text{otherwise},
\end{cases}
\quad x\in\mathcal{A},
\]
where we set $B_n^H$ as the closed ball of radius $n$ in $H$ \andrea{centered at zero}. \andrea{It is immediate to show} that $\Theta_n\in\mathcal{B}_b(\mathcal{A})$ for every $n\in\enne$. \andrea{Moreover, for any} $n \in \mathbb{N}$, the invariance of the measure $\gamma$ and the boundedness of $\Theta_n$ \andrea{imply}
\[
\int_{\mathcal{A}} \Theta_n(x)\: \gamma(\d x) = \int_{\mathcal{A}} P_t\Theta_n(x)\: \gamma(\d x)\qquad \forall \ t \ge 0.
\]
\andrea{By definition of $\Theta_n$, we also get}
\[
P_t \Theta_n(x):=\mathbb{E}\left[\Theta_n (u^x(t))\right]\le \mathbb{E}\left[ \|u^x(t)\|^2_H\right]
\]
\andrea{for all $t \geq 0$ and $n \in \enne$. Owing to Lemma \ref{lem:apriori1},} we have
\begin{multline}
\label{est:apriori1}
\E\|u(t)\|^2_H \le \|x\|^2_H\exp\left\{-\left(\frac{2}{K_2}\left[\min(\varepsilon,C_0)\mathbbm{1}_{L< +\infty}+ \left( \min(\varepsilon,C_0)-K_2\left(\frac{\widetilde{C}_G}{2}+ \widetilde{C}_J \right)\right)\mathbbm{1}_{L=+ \infty}\right] \right)t\right\}
\\
+\frac{C_1+\left(\frac{\widetilde{C}_G}{2}+ \widetilde{C}_J\right)(1+  \mathbbm{1}_{L<\infty}|\OO| L^2) }{\frac{1}{K_2}\left[\min(\varepsilon,C_0)\mathbbm{1}_{L< +\infty}+ \left( \min(\varepsilon,C_0)-K_2\left(\frac{\widetilde{C}_G}{2}+ \widetilde{C}_J \right)\right)\mathbbm{1}_{L=+ \infty}\right]}.
\end{multline}
\andrea{The additional assumption on the structural parameters yields that, by} letting $t \rightarrow + \infty$ in \eqref{est:apriori1}, the exponential term vanishes. \andrea{Letting $C >0$ denote the constant to the right hand side of \eqref{est:apriori1}, we therefore have}
\[
\limsup_{t \rightarrow + \infty } P_t \Theta_n(x) \le C,
\] 
\andrea{and, }as a consequence, 
\[
\int_{\mathcal{A}} \Theta_n(x)\:\gamma(\d x)\le C
\]
\andrea{for all $n\in \enne$. Since $C$ does not depend on $n$, the claim \eqref{est_inv_meas_H} is then a consequence of the monotone convergence theorem as $\Theta_n \to \Theta$ pointwise from below.}
Notice that the condition \[\min(\varepsilon,C_0)>K_2\left(\frac{\widetilde{C}_G}{2}+ \widetilde{C}_J \right)\] in the case $L=+\infty$ is needed \andrea{both} to ensure the existence of invariant measures and \andrea{to grant that} the constant $C$ in \eqref{est_inv_meas_H} is indeed positive. \\[0.3\baselineskip]
{\sc Step 2.} \andrea{In a similar fashion,} we now show that there exists a positive constant $C$ such that
\begin{equation}
\label{est_inv_meas_V}
\int_{\mathcal{A}}\|x\|_V^2\, \gamma({\rm d}x)\le C
\end{equation}
\andrea{for any invariant measure $\gamma$. Again, }we consider the mapping 
\[
\Lambda:\mathcal{A} \rightarrow [0,+\infty], \qquad
x \mapsto \|x\|^2_{V}\mathbbm{1}_{\mathcal{A} \cap V}(x) \qquad \forall\:x \in \A \cap V,
\]
\andrea{extended with value $+\infty$ outside $\A \cap V$,} and its approximati\andrea{ng sequence} $\{\Lambda_n\}_{n\in\enne}$, where for every $n\in\enne$,
\[
\Lambda_n:\mathcal{A}\to[0,n^2], \qquad 
\Lambda_n:x \mapsto 
\begin{cases}
\|x\|^2_{V} & \text{if} \ x \in B_n^V \cap \mathcal{A},
\\
n^2 & \text{otherwise},
\end{cases}
\quad x\in\mathcal{A},
\]
where $B_n^V$ is the closed ball of radius $n$ in $V$. \andrea{It is clear that} we have $\Lambda_n\in\mathcal{B}_b(\mathcal{A})$ for every $n\in\enne$. Exploiting the invariance of $\gamma$, the boundedness of $\Lambda_n$, the definition of $P$
and the Fubini--Tonelli theorem we have that
\begin{align*}
\int_{\mathcal{A}} \Lambda_n(x)\, \gamma(\d x) & = 
\int_0^1 \int_{\mathcal{A}} \Lambda_n(x) \, \gamma(\d x)\, {\rm d}s \\
& =
\int_0^1 \int_{\mathcal{A}} P_s \Lambda_n(x)\, \gamma(\d x)\, {\rm d}s\\
&= \int_0^1 \int_{\mathcal{A}} \E\left[\Lambda_n(u^x(s))\right]\, \gamma(\d x)\, {\rm d}s\\
&=\int_{\mathcal{A}}\int_0^1 \mathbb{E}\left[\Lambda_n(u^x(s)) \right]\,{\rm d}s\, \gamma(\d x)
\\
 &
\le
 \int_{\mathcal{A}}\int_0^1 \mathbb{E}\left[\|u^x(s)\|_V^2 \right]\,{\rm d}s\, \gamma(\d x).
\end{align*}
By Lemma~\ref{lem:apriori1} we infer that 
\begin{align*}
  \int_{\mathcal{A}}\int_0^1 \mathbb{E}\left[\|u^x(s)\|_V^2 \right]\,{\rm d}s\, \gamma(\d x)\le C \left(1 +\int_{\mathcal{A}}\|x\|_H^2\,\gamma(\d x) \right),
\end{align*}
\andrea{and therefore} estimate \eqref{est_inv_meas_H} yields 
\[
\int_{\mathcal{A}} \Lambda_n(x)\, \gamma(\d x) \le C,
\]
for a positive constant $C$ independent of $n \in \mathbb{N}$ \andrea{and for all $n \in \enne$}.
Since $\Lambda_n$ converges pointwise and monotonically from below to $\Lambda$, 
the monotone convergence theorem yields the second claim \eqref{est_inv_meas_V}.\\[0.3\baselineskip]
{\sc Step 3.} \andrea{A third approximation argument yields that} there exists a positive constant $C$ such that
\begin{equation}
\label{est_inv_meas_Psi}
\int_{\mathcal{A}} \|\Psi(x)\|_{L^1(\OO)}\: \gamma(\d x)\le C
\end{equation}
\andrea{for all invariant measures $\gamma$.}
Define the mapping
\[\Xi:\mathcal{A}\rightarrow [0, + \infty), \qquad 
x \mapsto \|\Psi(x)\|_{L^1(\OO)}, \qquad \forall \: x \in \mathcal{A}
\]
and its approximations $\{\Xi_n\}_{n\in\enne}$ given by
\begin{equation*}
\Xi_n:\A\to[0,n], \qquad x \mapsto 
\begin{cases}
\|\Psi(x)\|_{L^1(\OO)} & \text{if} \ \|\Psi(x)\|_{L^1(\OO)}\le n,\\
n & \text{otherwise},
\end{cases}
\qquad \forall \: x\in\A.
\end{equation*}
It holds that $\Xi_n\in\mathcal{B}_b(\mathcal{A})$ for every $n\in\enne$. Exploiting the invariance of $\gamma$, the boundedness of $\Xi_n$, the definition of $P$, the Fubini-Tonelli theorem and Lemma \ref{lem:apriori2} we infer 
\begin{align*}
    \int_{\mathcal{A}} \Xi_n(x)\: \gamma(\d x)
    &= \int_0^1\int_{\mathcal{A}} \Xi_n(x)\: \gamma(\d x)\,{\rm d}s
    \\
    &= \int_0^1\int_{\mathcal{A}} P_s\Xi_n(x)\: \gamma(\d x)\,{\rm d}s
    \\
    &=\int_{\mathcal{A}}\int_0^1 \mathbb{E}[\Xi_n(u^x(s))]\,{\rm d}s\: \gamma(\d x)
    \\
    &\le \int_{\mathcal{A}}\int_0^1 \mathbb{E}[\|\Psi(u^x(s))\|_{L^1(\OO)}]\,{\rm d}s\,\gamma(\d x)
    \\
    &\le C\left(1 + \int_{\mathcal{A}}\|x\|^2_H\, \gamma(\d x)\right).
\end{align*}
From \eqref{est_inv_meas_H} we thus infer the existence of a positive constant $C$, independent of $n \in \enne$, such that 
\[
 \int_{\mathcal{A}} \Xi_n(x) \: \gamma(\d x)\le C.
\]
As \andrea{in the previous steps}, since $\Xi_n$ converges pointwise and monotonically from below to $\Xi$, the monotone convergence theorem yields \eqref{est_inv_meas_Psi}.\\[0.3\baselineskip]
{\sc Step 4.} Eventually, \andrea{through a final iteration of an approximation argument,} we show that there exists a positive constant $C$ such that
\begin{equation}
    \label{est_inv_meas_Phi}
\int_{\mathcal{A}} \|\Phi'(x)\|^2_H\: \gamma(\d x)\le C.
\end{equation}
\andrea{for all invariant measures $\gamma$. Let}  
\[\Pi:\mathcal{A}\rightarrow [0, + \infty], \qquad 
x \mapsto 
\begin{cases}
  \|\Phi'(x)\|^2_{H} \quad&\text{if } \Phi'(x) \in H,\\
  +\infty \quad&\text{otherwise},
\end{cases}
\qquad \forall \: x \in \mathcal{A},
\]
and its approximations $\{\Pi_n\}_{n\in\enne}$, defined for every $n\in\enne$ \andrea{by}
\begin{equation*}
\Pi_n:\A\to[0,n^2], \qquad \Pi_n:x \mapsto 
\begin{cases}
\|\Phi'(x)\|^2_{H} & \text{if} \ \|\Phi'(x)\|_H\le n,\\
n^2 & \text{otherwise},
\end{cases}
\quad x\in\A.
\end{equation*}
\andrea{Once again,} it holds that $\Pi_n\in\mathcal{B}_b(\A)$ for every $n\in\enne$. Arguing similarly as above, thanks to Lemma \ref{lem:apriori2}, we get 
\begin{align*}
    \int_{\mathcal{A}} \Pi_n(x)\: \gamma(\d x)
    &= \int_0^1\int_{\mathcal{A}} \Pi_n(x)\: \gamma(\d x)\,{\rm d}s
    \\
    &= \int_0^1\int_{\mathcal{A}} P_s\Pi_n(x)\: \gamma(\d x)\,{\rm d}s
    \\
    &=\int_{\mathcal{A}}\int_0^1 \mathbb{E}[\Pi_n(u^x(s))]\,{\rm d}s\: \gamma(\d x)
    \\
    &\le \int_{\mathcal{A}}\int_0^1 \mathbb{E}[\|\Phi'(u^x(s))\|^2_H]\,{\rm d}s\: \gamma(\d x)
    \\
    &\le C\left(1 + \int_{\mathcal{A}}\left(\|x\|^2_H +\|\Psi(x)\|_{L^1(\OO)}\right)\: \gamma(\d x)\right).
\end{align*}
\andrea{It follows then from} \eqref{est_inv_meas_H} and \eqref{est_inv_meas_Psi} the existence of a positive constant $C$, independent of $n \in \enne$, such that 
\[
 \int_{\mathcal{A}} \Pi_n(x)\: \gamma(\d x)\le C
\]
\andrea{for all $n \in \mathbb N$.} Since $\Pi_n$ converges pointwise and monotonically from below to $\Pi$, the monotone convergence theorem yields \eqref{est_inv_meas_Phi}. \andrea{On account of \eqref{est_inv_meas_V} and \eqref{est_inv_meas_Phi}, we conclude that $\gamma$ is indeed supported on $\mathcal A_{\mathrm{str}}$.}
\subsection{Existence of an ergodic invariant measure}
Let us recall first the definition of ergodicity for the transition semigroup $P$.
In this direction, note that for every invariant measure $\gamma$,
by density and by definition of invariance,
the semigroup $P$ can be extended (with the same symbol for \andrea{convenience}) to 
a strongly continuous linear semigroup of contractions on $L^p(\mathcal{A},\mu)$ for every $p\in[1,+\infty)$.

\begin{defin}
  \label{def:erg}
  An invariant measure $\gamma\in\cP(\mathcal{A})$ for the semigroup $P$ is said to be ergodic if
  \[
  \lim_{t \rightarrow \infty}\frac 1t \int_0^tP_s \varphi\, {\rm d}s 
  = \int_{\mathcal{A}}\varphi(x) \, \gamma(\d x) \qquad \text{in} \ L^2(\mathcal{A}, \gamma)\quad \forall\, \varphi \in L^2(\mathcal{A}, \gamma).
  \]
\end{defin} \noindent
The proof of existence of ergodic invariant measure is analogous to the \andrea{one} of \cite[Proposition 3.7]{SZ}, \andrea{and therefore we omit the details}.
\subsection{Uniqueness of the invariant measure}
\andrea{Finally,} we show that, for Dirichlet boundary conditions, provided the system is sufficiently dissipative, the invariant measure is unique, strongly and exponential\andrea{ly} mixing, according to the following definition.
\begin{defin}
  \label{def:st_mix}
  An invariant measure $\gamma\in\cP(\mathcal{A})$ for the semigroup $P$ is said to be strongly mixing if
  \[
  \lim_{t \rightarrow +\infty}P_t \varphi
  = \int_{\mathcal{A}}\varphi(x) \, \gamma(\d x) \quad 
  \text{in} \ L^2(\mathcal{A}, \gamma)\qquad \forall\, \varphi \in L^2(\mathcal{A}, \gamma),
  \]
  and it is said to be exponential\andrea{ly} mixing if for any $\varphi \in \text{Lip}(\mathcal{A})$ 
  \[
  \left|P_t \varphi(x)-\int_{\mathcal{A}}\varphi(y)\,\gamma(\d y)\right|\le C(x)\|\varphi\|_{\text{Lip}}e^{-c t}, \qquad  \forall \:x \in \mathcal{A}, \quad t  > 0,
  \]
  for some positive constant $c$ and function $C(x)$. 
\end{defin} \noindent
\andrea{In order to prove uniqueness of invariant measures, we proceed as follows, recalling that we ask for the additional assumption
\[
\varepsilon \rho_1> \frac{C_G}{2} + C_J+ \sqrt{C_F} \qquad  \text{if} \ L<+\infty,
\]
or
\[
\min\left(\varepsilon \rho_1, \min(\varepsilon,C_0)\right)>\max\left(K_2\left(\frac{\widetilde{C}_G}{2}+ \widetilde{C}_J \right), \frac{C_G}{2}+C_J+\sqrt{C_F}\right) \qquad \text{if} \ L=+ \infty.
\]}Let $x, y \in \mathcal{A}$ and let $u^x$, $u^y$ be the \andrea{corresponding} solutions \andrea{admitting $x$ and $y$ as initial conditions, respectively}. Setting $w:=u^x-u^y$, the It\^o formula for $\|w\|^2_H$ yields, $\mathbb{P}$-almost surely, for every $t \ge 0$,
\begin{multline*}
     \dfrac{1}{2}\|w(t)\|^2_H + \varepsilon\int_0^t \|\nabla w(s)\|^2_{\b H} \: \d s \\  = \frac12\|x-y\|^2_H- \int_0^t \left(w(s), \Psi'(u^x(s))-\Psi'(u^y(s))\right)_H \: \d s - \int_0^t \left(w(s), F(u^x(s))-F(u^y(s))\right)_H \: \d s \\
     + \int_0^t \left(w(s), (G(u^x(s))-G(u^y(s))) \: \d W(s) \right)_H  + \dfrac{1}{2} \int_0^t \|G(u^x(s))-G(u^y(s))\|^2_{\mathscr L^2(U, H)} \: \d s \\
     + \int_0^t \int_Z (w(s^-), J(u^x(s^-),z) - J(u^y(s^-),z))_H\: \overline{\mu}(\d s,\, \d z) \\
    + \int_0^t \int_Z \|J(u^x(s^-),z) - J(u^y(s^-),z)\|^2_H \: \nu(\d z) \: \d s.
\end{multline*}
Reasoning as in Subsection \ref{ssec:uniq}, exploiting the Poincar\'e inequality and bearing in mind that the stochastic integrals are martingales, we obtain the estimate 
\[
\mathbb{E}\left[ \|w(t)\|^2_H\right] + 2\alpha\mathbb{E}\int_0^t \| w(s)\|^2_H \: \d s \le  \|x-y\|^2_H,
\]
where $\alpha:=\varepsilon\rho_1 - \left(\frac{C_G}{2}+C_J+\sqrt{C_F} \right)$ is a positive constant by assumption. By the Gronwall lemma we obtain 
\begin{equation}\label{uniq3}
  \E\|(u^x-u^y)(t)\|^2_H \le e^{-2\alpha t} \|x-y\|^2_H,
\quad\forall\,t\ge0.
\end{equation}
Consequently, let $\gamma$ be an invariant measure for \andrea{the transition semigroup}
$P$. For any $\varphi \in \mathcal C_b^1(H)$ and 
$x \in \mathcal{A}$, by definition of invariance 
and the estimate \eqref{uniq3} we have
\begin{align}
\label{strongly_mixing}
\left| P_t(\varphi|_{\mathcal{A}})(x)-\int_{\mathcal{A}}\varphi(y)\, \gamma({\rm d}y)\right|^2 
&\le \|D\varphi\|^2_{\mathcal C_b(H)}\int_{\mathcal{A}}\mathbb{E}\left[ \|u^x(t)-u^y(t)\|^2_H\right]\, \gamma({\rm d}y)
\notag\\
&\le \|D\varphi\|^2_{\mathcal C_b(H)}e^{-2\alpha t}\int_\mathcal{A} \|x-y\|_H^2\: \gamma (\d y),
\end{align}
\andrea{and the right hand side} converges to zero as $t \rightarrow + \infty$ since \[\int_\mathcal{A} \|y\|^2_H\: \gamma(\d y)< + \infty\]
by virtue of \andrea{the argument shown in Subsection \ref{ssec:inv_support}}. Since $C^1_b(H)|_{\mathcal{A}}$ is dense in $L^2(\mathcal{A}, \gamma)$, we infer that 
\begin{equation*}
\left| P_t\varphi(x)-\int_{\mathcal{A}}\varphi(y)\: \gamma(\d y)\right|^2\rightarrow 0
 \quad \text{as} \ t \rightarrow +\infty, \quad \forall \: \varphi \in L^2(\mathcal{A}, \gamma),
\end{equation*}
from which we deduce that the strong mixing property holds true, again by the arguments in \andrea{Subsection \ref{ssec:inv_support}}. By computations similar to \eqref{strongly_mixing} and \andrea{the argument shown in Subsection \ref{ssec:inv_support}}, for any $\varphi\in \text{Lip}(\mathcal{A})$, we also infer
\[
\left|P_t \varphi(x)-\int_\mathcal{A}\varphi(y)\: \gamma(\d y)\right|\le C(1+ \|x\|_H)\|\varphi\|_{\text{Lip}}e^{-\alpha t}, \qquad \forall \: x \in \mathcal{A}, \quad t >0.
\]
for a positive constant $C$, that is, the exponential mixing property holds true. The above computation easily implies also the uniqueness of the invariant measure. 
Indeed, let $\pi$ be another invariant measure, then for all $\varphi \in \mathcal C_b^1(H)$ we have
\begin{align*}
\left|  \int_{\mathcal{A}}\varphi(y)\, \gamma({\rm d}y)- \int_\mathcal{A}\varphi(x)\, \pi({\rm d}x) \right| 
&=\left|  \int_{\mathcal{A} }\int_{\mathcal{A}}\left(P_t(\varphi|_{\mathcal{A}})(y)
-P_t(\varphi|_{\mathcal{A}})(x)\right)\,  \pi({\rm d}x) \: \gamma({\rm d}y) \right|
\\
&\le \|D\varphi\|^2_{\mathcal C_b(H)} e^{-2\alpha t}  \int_{\mathcal{A} }\int_{\mathcal{A}}\|x-y\|^2_H
\,  \pi({\rm d}x) \:\gamma({\rm d}y) \rightarrow 0 \qquad \text{as} \  t \rightarrow +\infty.
\end{align*}
The fact that the unique invariant measure is also ergodic follows from the existence of ergodic invariant measures, and this concludes the proof.
\section{Approximation and simulation}
\label{sec:numerics}
\subsection{Approximation}

We approximate \eqref{eq:strongform} by a semi-implicit Euler--Maruyama method in time and a conforming finite element discretization in space. 
The Laplacian and the singular monotone part of the drift are treated implicitly, while the stochastic increments are handled explicitly. 
This is natural in the presence of singular convex potentials, where an implicit step prevents spurious excursions outside the physically admissible range and mirrors the monotonicity structure exploited in the analysis. 
In contrast, for polynomial double-well potentials the drift may grow superlinearly, and standard explicit schemes may diverge \cite{beccari2019strong,BG19}; in that setting, tamed variants are often used \cite{kruse2023bdf2,huang2023stability,fritz2025analysis}. 
Here, since the constraint is enforced through a singular convex contribution, a semi-implicit treatment is the appropriate stabilization mechanism. 

\subsubsection*{Time discretization} Let $0=t_0<t_1<\dots<t_N=T$ with $t_{n+1}-t_n=\tau$ for all $n \in \{0, .., N-1\}$. 
Following the previous discussion, we discretize \eqref{eq:strongform} by the implicit--explicit Euler--Maruyama scheme
\begin{equation}\label{eq:scheme-strong}
u^{n+1} - \tau\varepsilon \Delta u^{n+1} + \tau \Psi_\lambda'(u^{n+1}) + \tau F_1(u^{n+1})
= u^n + \tau F_2(u^n) +G(u^n)\,\Delta W^n + \Delta J^n,
\end{equation}
where $F(u^{n+1},u^n)=F_1(u^{n+1})+F_2(u^n)$ is divided into its implicit and explicit components, $\Delta W^n=W(t_{n+1})-W(t_n)$ and $\Delta J^n$ is a compensated jump increment.

In the following, we discuss each component:
\begin{itemize}
\item We split the potential function as in the analysis: $\partial\Psi+\!F_1$, with $\Psi$ convex and singular at the endpoints. 
 Concretely, we take as convex singular contribution the logarithmic entropy
\[
\Psi(u)=\theta\Big(u\ln u+(1-u)\ln(1-u)\Big),
\qquad u\in(0,1),
\]
which yields a strong barrier at $u=0$ and $u=1$, while the perturbation $F_1(u)=-4\theta_0(u-\frac12)$ is Lipschitz continuous and fits assumption~\ref{hyp:F}, with $\theta<\theta_0$ as specified below. 
\item The cylindrical Wiener term models continuous background fluctuations. 
At each time step we generate a mean-zero spatial random field by a truncated sine expansion,
\[
\eta^n(x)=\alpha \sum_{k=1}^{K}\sum_{l=1}^{L}\xi_{k,l}^n\,\sin(k\pi x_1)\sin(l\pi x_2),
\qquad \xi_{k,l}^n\overset{\text{i.i.d.}}{\sim}\mathcal N(0,1),
\]
and enforce $\int_\Omega \eta^n\,\mathrm{d}x=0$ by subtracting its spatial mean. 
The multiplicative prefactor $G(u)=c_{\mathrm{noise}}\,u(1-u)$ is chosen interface-localized 
so that noise vanishes in the pure phases.
    \item The jump term represents abrupt localized damage events and is modeled by a (possibly compensated) Poisson integral.
In the compensated case we discretize the increment
\[
\int_{t_n}^{t_{n+1}}\!\int_Z J(u^-,z)\,\bar\mu(\mathrm{d}t,\mathrm{d}z),
\qquad \bar\mu=\mu-\nu(\mathrm{d}z)\,\mathrm{d}t,
\]
by the standard Euler--Maruyama approximation
\begin{equation}\label{eq:comp-jump-increment}
\Delta J^n
=\sum_{k=1}^{N_J^n} J(u^n,z_k^n)\;-\;\tau\!\int_Z J(u^n,z)\,\nu(\mathrm{d}z),
\end{equation}
where $N_J^n\sim\mathrm{Poisson}(\tau\,\nu(Z))$ and the marks $z_k^n$ are sampled i.i.d.\ from $\nu/\nu(Z)$.
By construction, $\mathbb E[\Delta J^n\,|\,\mathcal F_{t_n}]=0$. 
In an {uncompensated} model one instead replaces $\bar\mu$ by $\mu$ (equivalently, drops the drift correction term in \eqref{eq:comp-jump-increment} or chooses $F_2$ as the compensator); this corresponds to a systematic accumulation of damage.
Each event is centered at a random location $z\in\OO$ and acts through a smooth track kernel,
\begin{equation}\label{eq:J-choice}
J(u,z)(x) = A_{\mathrm{jump}}(u(x))\,\kappa(x;z),
\qquad 
\kappa(x;z)=\exp\!\Big(-\tfrac{\|x-z\|^2}{2\sigma_{\mathrm{track}}^2}\Big).
\end{equation}
The choice of the function $A_{\mathrm{jump}}$ ensures that jumps are localized to intermediate states and negligible in the pure phases. 
Since $0\le \kappa\le 1$, this fits the uniform boundedness assumptions on $J$ used in the analysis.
\end{itemize}

\subsubsection*{Space-time discretization}
Let $\mathcal{T}_h$ be a triangulation of $\OO$ with mesh size $h$ and $V_h\subset H^1(\Omega)$ the space of continuous piecewise-linear functions. 
The fully discrete problem reads: find $u_h^{n+1}\in V_h$ such that for all $v_h\in V_h$,
\begin{multline}\label{Eq:ACfulldiscrete}
    (u_h^{n+1},v_h) + \tau \varepsilon (\nabla u_h^{n+1},\nabla v_h)
+ \tau (\Psi_\lambda'(u_h^{n+1}),v_h) + \tau (F_1(u_h^{n+1}),v_h) \\
= (u_h^n,v_h)+ \tau (F_2(u_h^{n}),v_h) + (G(u_h^n)\Delta W^n,v_h)
+ (\Delta J_h^n,v_h),
\end{multline} 
where $\Delta J_h^n$ is the finite element representation of \eqref{eq:comp-jump-increment}.
At each time step we solve the resulting nonlinear system by Newton's method. 
The implementation is carried out in the Firedrake framework \cite{rathgeber2016firedrake}.

\subsection{Simulation setup}

We solve the stochastic Allen--Cahn model with the semi-implicit variational scheme from \eqref{Eq:ACfulldiscrete} to illustrate mesoscopic dynamics under continuous fluctuations and sudden, localized damage hits. 
In this numerical section the phase field $u(t,x)\in[0,1]$ denotes the local damage level, with $u=0$ corresponding to an undamaged (healthy) state and $u=1$ to fully damaged, irreparable material. 
We investigate two different cases under different regimes by tuning the jump intensity $\lambda_{\mathrm{jump}}$, which is related to the expected number of radiation events per unit area and time. We refer to the four cases of $\lambda_{\mathrm{jump}} \in \{0,10,50,100\}$ as "None", "Few", "Some", and "Many", respectively,

The fixed parameters are $\tau=0.05$, $T=10$, $h=1/128$, $\OO=(0,1)^2$, $\varepsilon=1/1600$, $\theta=1/2$, $\theta_0=1$, and $\sigma_{\mathrm{track}}=1/10$. 
For the Poisson jump counter $N^n\sim\mathrm{Poisson}(\xi)$ per time step we have
\[
\xi=\mathbb E[N^n]=\lambda_{\mathrm{jump}}\,\tau\,|\OO|,
\qquad 
\mathbb P(N^n=k)=e^{-\xi}\frac{\xi^k}{k!}.
\]
In particular (since $|\OO|=1$), $\xi=0.05\,\lambda_{\mathrm{jump}}$, so for $\lambda_{\mathrm{jump}}=10$ one has $\mathbb P(N^n=0)=e^{-0.5}\approx0.61$, whereas for $\lambda_{\mathrm{jump}}\ge 50$ jumps occur in almost every step ($\mathbb P(N^n=0)=e^{-2.5}\approx 0.082$ for $\lambda_{\mathrm{jump}}=50$).

\subsection{Case 1: Random initial datum and compensated jump}

To emphasize damage nucleation from a nearly healthy configuration, we initialize the system close to $u\equiv 0.5$ with a small smooth random perturbation:
\begin{equation}\label{eq:init-random}
u(x,0)=0.5+\eta(x),
\qquad x\in\OO,
\end{equation}
where $\eta$ is a smooth mean-zero random field generated by a truncated sine series and scaled to have small amplitude. We choose $c_{\mathrm{noise}}=1/2$.
The jump forcing is implemented in compensated form, consistently with \eqref{eq:comp-jump-increment}, and we choose $A_\mathrm{jump}(u)=\frac12 u(1-u)$. Therefore, varying the jump parameter $\lambda_\mathrm{jump}$ primarily modifies the intermittency, spatial clustering and variance of damage patterns. 

Figure~\ref{fig:case1_evo} illustrates the evolution of the damage field for increasing jump intensities. 
Starting from a small random perturbation around $u\equiv 0.5$, the dynamics rapidly separates into near-pure phases, and the spatial organization at the final time depends visibly on the jump activity: higher jump frequencies lead to a denser pattern of localized nucleation events and thus to a different coarsening morphology.
The rightmost column shows the sampled jump locations, highlighting the increasingly clustered track pattern as $\lambda_{\mathrm{jump}}$ grows.
Figure~\ref{fig:case1_minmax} reports the evolution of the total damage $\int_\OO u(t)\,\mathrm{d}x$ (ensemble mean with uncertainty bands). 
In the compensated setting, the total damage remains approximately constant in the regimes ``None'' and ``Many'', while the intermediate regime ``Few'' may display a more noticeable deviation: when jumps are rare, single realizations can produce a stronger intermittent bias, and the ensemble is more sensitive to finite-sample fluctuations.
Finally, Figure~\ref{fig:case1_minmax} also shows that $u_{\min}(t)$ and $u_{\max}(t)$ stay strictly inside $(0,1)$ for all jump intensities, confirming that the logarithmic barrier effectively prevents excursions to the endpoints at the discrete level.

\subsection{Case 2: Circular initial datum and uncompensated jump}

To model a single, initially healthy cell nucleus, we initialize the viability field $u(x, 0)$ as a circular region of healthy tissue ($u = 0$) with radius $0.4$, surrounded by an extracellular environment considered lethally damaged ($u = 1$). A smooth hyperbolic tangent transition zone with thickness proportional to $\varepsilon$ is used to regularize the interface:
\[
u(x, 0) = \frac{1}{2} \left(1 - \tanh\!\left( \frac{0.4 - \|x - (\tfrac12,\tfrac12)\|}{\sqrt{2\varepsilon}} \right) \right).
\]
This simplified geometry allows for a clear analysis of radiation damage within a well-defined biological unit. This time, we choose $c_{\mathrm{noise}}\in\{\frac12,5\}$ and investigate the influence of the increase in the random fluctuations due to the Wiener noise. Moreover, we use an uncompensated jump by choosing $F_2$ as the compensator and we choose $A_\mathrm{jump}=\frac12 (1-u)$ to allow damage events even in the region with $u=0$, so fully healthy regions may be damaged.

In this interpretation, the uncompensated jump forcing describes abrupt, spatially localized radiation hits (tracks) that
increase damage on average, whereas the Wiener perturbation represents persistent micro-scale variability.
This perspective is in line with phase-field modeling approaches in oncology, where therapeutic interventions
(chemotherapy, radiotherapy, immunotherapy) are incorporated as additional forcing terms and one is interested not only
in tumor control but also in the impact on neighboring healthy structures; see, e.g., the discussion of treatment effects in
\cite[Sec.~2.7]{fritz2023tumor}.
In particular, the track kernel in \eqref{eq:J-choice} provides a simple mechanism to model the fact that radiation is not perfectly focused:
even when the target is localized, stochastic energy deposition may induce collateral damage in the surrounding region.

\begin{figure}[H] \centering
\includegraphics[width=.9\textwidth,page=1]{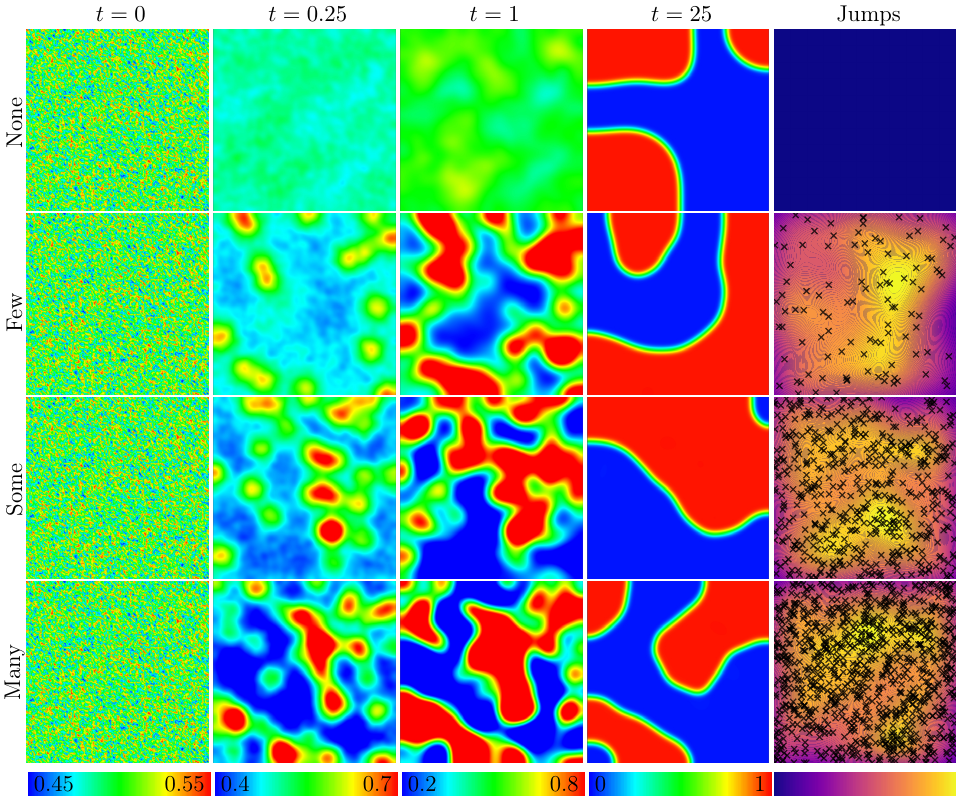}
\caption{Case 1 (random initial condition, compensated jumps): snapshots of $u(t,\cdot)$ at $t=0,0.25,1,25$ for increasing jump intensities (None/Few/Some/Many). The last column visualizes the jump locations up to final time.}
\label{fig:case1_evo}
\end{figure} \vspace{-.3cm}

\begin{figure}[H]
\centering
\includegraphics[width=.9\textwidth,page=2]{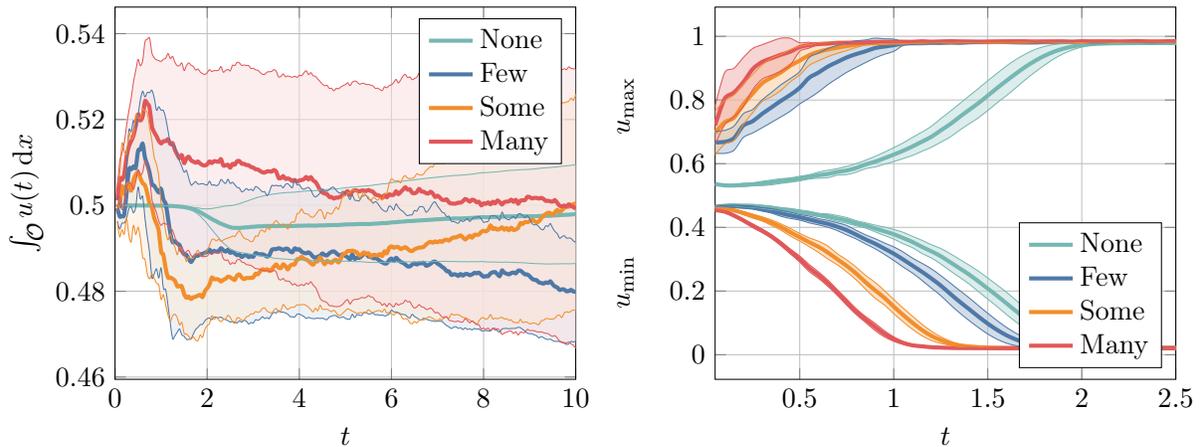}
\caption{Case 1 (compensated jumps): left: evolution of the total damage $\int_\OO u(t)\,\mathrm{d}x$ for different jump intensities (ensemble mean with uncertainty bands); right: evolution of $u_{\min}(t)$ and $u_{\max}(t)$ for different jump intensities. The values remain strictly within $(0,1)$, indicating effective enforcement of the logarithmic barrier.}
    \label{fig:case1_minmax}
\end{figure}

\begin{figure}[H] \centering
\includegraphics[width=.9\textwidth,page=3]{images.pdf}
\caption{Case 2 (circular initial condition, {uncompensated} jumps): snapshots of $u(t,\cdot)$ at $t=0,0.25,1,4$ for increasing jump intensities (None/Few/Some/Many), $c_\mathrm{noise}=1/2$. The last column visualizes the jump locations up to final time.}
\label{fig:case2_evo}
\end{figure}

\begin{figure}[H]
    \centering
\includegraphics[width=.9\textwidth,page=4]{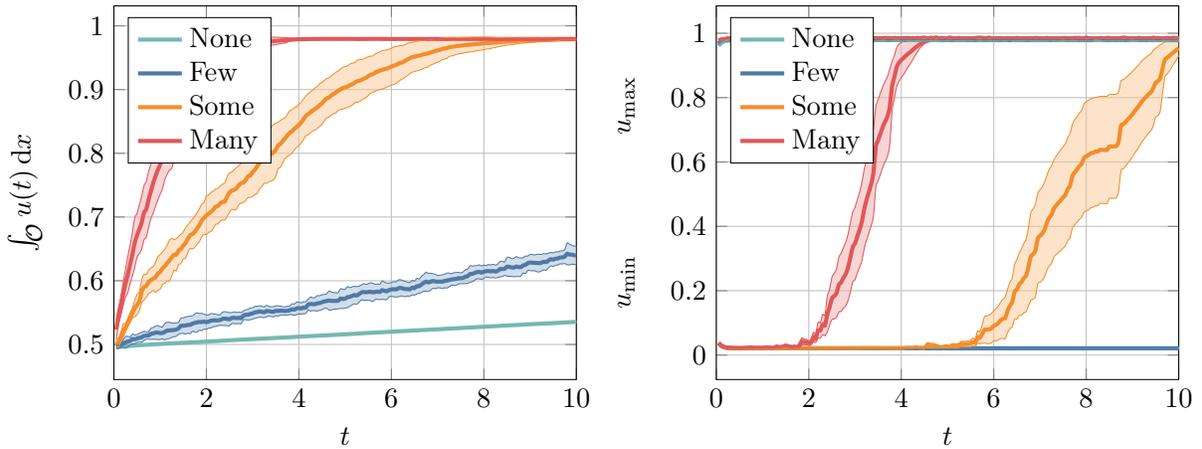}
\caption{Case 2 (uncompensated jumps): left: evolution of the total damage $\int_\OO u(t)\,\mathrm{d}x$ for different jump intensities (ensemble mean with uncertainty bands). Increasing jump intensity accelerates the growth of total damage; right: evolution of $u_{\min}(t)$ and $u_{\max}(t)$ for different jump intensities. The solution remains strictly within $(0,1)$ for all regimes.}
\label{fig:case2_minmax}
\end{figure}

Figure~\ref{fig:case2_evo} shows the evolution of a healthy circular region embedded in a damaged background, choosing $c_\mathrm{noise}=1/2$
In contrast to Case~1, the jump term is uncompensated and thus injects damage on average.
Consequently, the total damage $\int_\OO u(t)\,\mathrm{d}x$ increases monotonically in Figure~\ref{fig:case2_minmax}, and the destruction of the healthy region accelerates as the jump intensity increases: for ``Many'' jumps the cell is rapidly eroded and the domain becomes fully damaged.
The corresponding min/max curves in Figure~\ref{fig:case2_minmax} again remain strictly within $(0,1)$, indicating robust enforcement of the physical bounds across all regimes. Lastly, we depict in Figure~\ref{fig:case2high_evo} the case of $c_\mathrm{noise}=5$. It is visible that the interface is deformed in contrast to the case of $c_\mathrm{noise}=1/2$ from before.

\begin{figure}[H] \centering
\includegraphics[width=.9\textwidth,page=5]{images.pdf}
\caption{Case 2 (circular initial condition, {uncompensated} jumps): snapshots of $u(t,\cdot)$ at $t=0,0.25,1,4$ for increasing jump intensities (None/Few/Some/Many), with $c_\mathrm{noise}=5$. The last column visualizes the jump locations up to final time.}
\label{fig:case2high_evo}
\end{figure}

\appendix
\section{Further useful estimates}
\label{App_A}
\andrea{In this appendix, we include two useful estimates that are needed in the investigation of the longtime behavior of the system, i.e., in Section \ref{sec:invariant}. Let $x\in H$ be such that $\Psi(x) \in L^1(\OO)$ and} let $u$ be the unique corresponding solution to \ref{eq:strongform} given \andrea{by} Theorem \ref{thm:wellposed}. Define also the functional 
\[
\mathcal{H}_0: D(\mathcal H_0) \subset V \rightarrow \mathbb{R}, \qquad \mathcal{H}_0(v)=\int_{\OO}\Phi(v) \: \andrea{\d y},
\]
with $\Phi$ given in Lemma \ref{rem:Phi}. \andrea{The first estimate reads as follows.}
\begin{lem}
\label{lem:apriori1}
    Let Assumptions \ref{hyp:structural}-\ref{hyp:J} hold.   
    Then, for every $x\in\mathcal A$ and $t \geq 0$ it holds that 
  \begin{multline*}
\E\|u(t)\|^2_H \le \|x\|^2_H\exp\left\{-\left(\frac{2}{K_2}\left[\min(\varepsilon,C_0)\mathbbm{1}_{L< +\infty}+ \left( \min(\varepsilon,C_0)-K_2\left(\frac{\widetilde{C}_G}{2}+ \widetilde{C}_J \right)\right)\mathbbm{1}_{L=+ \infty}\right] \right)t\right\} 
\\
+\frac{C_1+\left(\frac{\widetilde{C}_G}{2}+ \widetilde{C}_J\right)(1+  \mathbbm{1}_{L<\infty}|\OO| L^2) }{\frac{1}{K_2}\left[\min(\varepsilon,C_0)\mathbbm{1}_{L< +\infty}+ \left( \min(\varepsilon,C_0)-K_2\left(\frac{\widetilde{C}_G}{2}+ \widetilde{C}_J \right)\right)\mathbbm{1}_{L=+ \infty}\right]}
\end{multline*}
    and 
    \begin{multline*}
    \left[\min(\varepsilon,C_0)\mathbbm{1}_{L< +\infty}+ \left( \min(\varepsilon,C_0)-K_2\left(\frac{\widetilde{C}_G}{2}+ \widetilde{C}_J \right)\right)\mathbbm{1}_{L=+ \infty}\right] \E\int_0^t \| u(s)\|^2_V \: \d s  
     \\
     \le \frac 12\|x\|^2_H  +  \left(C_1+\left(\frac{\widetilde{C}_G}{2}+ \widetilde{C}_J\right)(1+  \mathbbm{1}_{L<\infty}|\OO| L^2) \right)t.
    \end{multline*}
    \end{lem}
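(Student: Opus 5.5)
The plan is to apply the It\^o formula for $\frac12\|u\|_H^2$, in the form already used in Subsection \ref{ssec:uniq} (Da Prato--Zabczyk for the Wiener part, \cite[Theorem B.1]{itojump} for the jump part). We then take expectations, so that both the Wiener and the compensated-Poisson stochastic integrals vanish as martingales; their square integrability follows from Theorem \ref{thm:wellposed}. This yields, for every $t\ge0$,
\[
\tfrac12\E\|u(t)\|_H^2+\eps\E\int_0^t\|\nabla u\|_{\b H}^2\,\d s+\E\int_0^t(\Phi'(u),u)_H\,\d s=\tfrac12\|x\|_H^2+\tfrac12\E\int_0^t\|G(u)\|_{\cL^2(U,H)}^2\,\d s+\E\int_0^t\!\!\int_Z\|J(u,z)\|_H^2\,\nu(\d z)\,\d s .
\]

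For the drift we use Remark \ref{rem:Phi}, which gives $(\Phi'(u),u)_H\ge C_0\|u\|_H^2-C_1$; here the $|\OO|$ factor is absorbed into $C_1$. Combined with the gradient term, this gives the lower bound $\min(\eps,C_0)\|u\|_V^2-C_1$, with $\|u\|_V^2=\|u\|_H^2+\|\nabla u\|_{\b H}^2$ in the Neumann case; the Dirichlet case is handled analogously. For the noise terms, Remark \ref{rem:linear_growth} gives $\frac12\|G(u)\|^2+\int_Z\|J\|^2\le(\frac{\widetilde C_G}{2}+\widetilde C_J)(1+\|u\|_H^2)$.

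The two cases now split.

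\textbf{Case $L<+\infty$.} Item (ii) of Theorem \ref{thm:wellposed} gives $0<u<L$, so $\|u\|_H^2\le|\OO|L^2$. The noise contribution is therefore bounded by the constant $(\frac{\widetilde C_G}{2}+\widetilde C_J)(1+|\OO|L^2)$, and no $\|u\|_V$ term needs to be absorbed.

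\textbf{Case $L=+\infty$.} We keep the $\|u\|_H^2$ term and bound it by $K_2\|u\|_V^2$, which is where the constant $K_2$ enters. It is absorbed into the left side, which leaves the coefficient $\min(\eps,C_0)-K_2(\frac{\widetilde C_G}{2}+\widetilde C_J)$.

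Denote the resulting coefficient in either case by $\kappa$ (the bracket in the statement) and set $M:=C_1+(\frac{\widetilde C_G}{2}+\widetilde C_J)(1+\mathbbm 1_{L<\infty}|\OO|L^2)$. Then
\[
\tfrac12\E\|u(t)\|_H^2+\kappa\,\E\int_0^t\|u\|_V^2\,\d s\le\tfrac12\|x\|_H^2+Mt .
\]
Dropping the first term gives the second inequality of the statement immediately.

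For the first inequality we work with the differential version rather than the integrated one. The function $y(t)=\E\|u(t)\|_H^2$ is absolutely continuous, since the identity above holds with $0$ replaced by arbitrary $s<t$ by the same argument. Using $\|u\|_H^2\le K_2\|u\|_V^2$, i.e. $\|u\|_V^2\ge K_2^{-1}\|u\|_H^2$ with the paper's convention, we obtain
\[
y'\le-\frac{2\kappa}{K_2}\,y+2M .
\]
The Gronwall lemma then gives
\[
y(t)\le\|x\|_H^2e^{-2\kappa t/K_2}+\frac{M}{\kappa/K_2}\bigl(1-e^{-2\kappa t/K_2}\bigr),
\]
which yields the claim.

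I expect the main obstacle to be rigor rather than the computation. It lies in three points: justifying the It\^o formula directly for the limit solution, where $\Psi'$ is only in $L^2$; the martingale property of the jump integral; and the time-differentiability of $y$. To secure these, I would first perform the whole argument at the Yosida level for $u_\lambda$, where everything is regular, using $(\Psi_\lambda'(s)+F(s))s\ge C_0s^2-C_1$ uniformly for small $\lambda$. One subtlety is that the bound $u_\lambda\in(0,L)$ fails at that level, so in the case $L<\infty$ one would use $R_\lambda(u_\lambda)\in(0,L)$ within $G_\lambda$. The inequality would then be passed to the limit $\lambda\to0$ using the strong convergence $u_\lambda\to u$ in $L^r(\Omega;L^\infty(0,T;H))$ proven in Section \ref{sec:wellposed}, together with lower semicontinuity for the $V$-norm term.
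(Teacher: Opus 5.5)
Your proposal is correct and follows essentially the same route as the paper: It\^o's formula for $\frac12\|u\|_H^2$, expectations to remove the martingale terms, the coercivity from Remark \ref{rem:Phi} and the linear growth from Remark \ref{rem:linear_growth} (using $0<u<L$ when $L<+\infty$, and absorption via $K_2$ when $L=+\infty$), then the embedding $V\embed H$ and Gronwall. Working with the differential form, i.e.\ with the identity on arbitrary intervals $[s,t]$, genuinely sharpens the paper's argument, since the integrated inequality on $[0,t]$ alone, taken literally, does not give the decaying exponential; note also that you, like the paper, implicitly need the bracket $\kappa$ to be positive, which for $L=+\infty$ is exactly the extra hypothesis of Theorem \ref{thm:invariant}.
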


\begin{proof}
We apply the It\^o formula to the squared $H$-norm \andrea{of the solution $u$ at time $t$}. For every $t \ge 0$, it holds 
\begin{multline} 
\label{Ito_H_2}
     \dfrac{1}{2}\|u(t)\|^2_H + \varepsilon \int_0^t \|\nabla u(s)\|^2_{\b H} \: \d s + \int_0^t \left(u(s), \Phi'(u(s))\right)_H \: \d s = \frac 12\|x\|^2_H \\
     + \int_0^t \left(u(s), G(u(s))\: \d W(s) \right)_H  + \dfrac{1}{2} \int_0^t \|G(u(s))\|^2_{\mathscr L^2(U, H)} \: \d s \\
     + \int_0^t \int_Z (u(s^-), J(u(s^-),z))_H\: \overline{\mu}(\d s,\, \d z) 
     + \int_0^t \int_Z \|J(u(s^-), z)\|^2_H \: \nu(\d z) \: \d s,
\end{multline}
\andrea{$\P$-almost surely}. For every $t \ge 0$ from Remark \ref{rem:Phi} we infer 
\[
 \int_0^t \left(u(s), \Phi'(u(s))\right)_H \: \d s \ge C_0 \int_0^t \|u(s)\|^2_H -C_1 t,
\]
whereas Remark \ref{rem:linear_growth} yields
\begin{multline}
\label{stima_int}
\dfrac{1}{2} \int_0^t \|G(u(s))\|^2_{\mathscr L^2(U, H)} \: \d s
     + \int_0^t \int_Z \|J(u(s^-), z)\|^2_H \: \nu(\d z) \: \d s
     \\
     \le \left(\frac{\widetilde{C}_G}{2}+ \widetilde{C}_J\right)\left(t(1+ \mathbbm{1}_{L<\infty}|\OO| L^2) + \mathbbm{1}_{L=+ \infty} \int_0^t\|u(s)\|^2_H\, {\rm d}s \right).
\end{multline}
Therefore, we obtain the estimate 
\begin{multline*} 
     \dfrac{1}{2}\|u(t)\|^2_H + \varepsilon \int_0^t \|\nabla u(s)\|^2_{\b H} \: \d s +C_0\int_0^t \|u(s)\|^2_H \: \d s 
     \\
     \le \frac 12\|x\|^2_H  +  \left(C_1+\left(\frac{\widetilde{C}_G}{2}+ \widetilde{C}_J\right)(1+  \mathbbm{1}_{L<\infty}|\OO| L^2) \right)t +  \left(\frac{\widetilde{C}_G}{2}+ \widetilde{C}_J \right)\mathbbm{1}_{L=+ \infty} \int_0^t\|u(s)\|^2_H\, {\rm d}s 
      \\
      + \int_0^t \left(u(s), G(u(s))\: \d W(s) \right)_H  
     + \int_0^t \int_Z (u(s^-), J(u(s^-),z))_H\: \overline{\mu}(\d s,\, \d z).
\end{multline*}
Taking \andrea{expectations} on both sides of the above inequality and bearing in mind that the stochastic integrals are martingales, we get 
\begin{multline*} 
     \dfrac{1}{2}\E\|u(t)\|^2_H + \left[\min(\varepsilon,C_0)\mathbbm{1}_{L< +\infty}+ \left( \min(\varepsilon,C_0)-K_2\left(\frac{\widetilde{C}_G}{2}+ \widetilde{C}_J \right)\right)\mathbbm{1}_{L=+ \infty}\right] \E\int_0^t \| u(s)\|^2_V \: \d s  
     \\
     \le \frac 12\|x\|^2_H  +  \left(C_1+\left(\frac{\widetilde{C}_G}{2}+ \widetilde{C}_J\right)(1+  \mathbbm{1}_{L<\infty}|\OO| L^2) \right)t,
\end{multline*}
which proves the second statement. By the continuous Sobolev embedding $V \embed H$, from the above estimate we infer 
\begin{multline*} 
     \dfrac{1}{2}\E\|u(t)\|^2_H + \frac{1}{K_2}\left[\min(\varepsilon,C_0)\mathbbm{1}_{L< +\infty}+ \left( \min(\varepsilon,C_0)-K_2\left(\frac{\widetilde{C}_G}{2}+ \widetilde{C}_J \right)\right)\mathbbm{1}_{L=+ \infty}\right] \E\int_0^t \| u(s)\|^2_H \: \d s  
     \\
     \le \frac 12\|x\|^2_H  +  \left(C_1+\left(\frac{\widetilde{C}_G}{2}+ \widetilde{C}_J\right)(1+  \mathbbm{1}_{L<\infty}|\OO| L^2) \right)t
\end{multline*}
and the Gronwall lemma yields 
the first claim. This concludes the proof.
\end{proof} \noindent
\andrea{The second estimate concerns integrability properties of the possibly singular nonlinearity of the problem.}
\begin{lem}
\label{lem:apriori2}
    Let Assumptions \ref{hyp:structural}-\ref{hyp:J} hold.  
    Then, for every $t \ge 0$ there exists 
    a constant $C>0$ such that, for all $x\in\mathcal A$,
  \begin{equation*}
  \mathbb{E} \int_0^t \|\Psi(u(s))\|_{L^1(\OO)}\:{\rm d}s \le C(1 + \|x\|^2_H)
  \end{equation*}
  and 
   \begin{equation*}
  \mathbb{E} \int_0^t \|\Phi'(u(s))\|_{H}^2\,{\rm d}s \le C(1+ \|x\|^2_H+ \|\Psi(x)\|_{L^1(\OO)}),
  \end{equation*}
\end{lem}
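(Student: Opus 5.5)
The first bound will come from the energy identity for $\|u\|_H^2$, and the second from an It\^o formula for the free energy $\mathcal H_0(u)=\int_\OO\Phi(u)$. Since $u$ is only the limit of the Yosida approximations $u_\lambda$ of Section~\ref{sec:wellposed}, I would do all computations at the level of $u_\lambda$, with $\Phi_\lambda$ and the corrected noises $G_\lambda,J_\lambda$. Constants must not depend on $\lambda$. At the end I pass to the limit $\lambda\to0^+$ using Fatou's lemma, the pointwise monotone convergence $\Psi_\lambda\nearrow\Psi$, and the weak lower semicontinuity of the norm under $\Phi'_\lambda(u_\lambda)\rightharpoonup\Phi'(u)$. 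The point is that, with $p=2$, only first moments are needed. So I take expectations, which kills the martingale terms, instead of using the Burkholder--Davis--Gundy estimates of the \textit{Second estimate}.

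\textbf{Step 1 (first inequality).} For $\Psi\ge0$ convex with $\Psi_\lambda(0)=0$ and $\Psi_\lambda'(0)=0$, convexity gives $\Psi_\lambda(s)\le\Psi_\lambda'(s)s$. Hence
\[
\int_0^t\|\Psi_\lambda(u_\lambda)\|_{L^1(\OO)}\le\int_0^t(\Psi'_\lambda(u_\lambda),u_\lambda)_H.
\]
In the It\^o formula \eqref{eq:uniform11} for $\frac12\|u_\lambda\|_H^2$ I keep this term on the left rather than discarding it. Writing $F$ as Lipschitz with linear growth, I take expectations and use Remark~\ref{rem:linear_growth} and Lemma~\ref{lem:properties}(i) to bound the trace and jump-compensator terms by $C(t+\E\int_0^t\|u_\lambda\|_H^2)$. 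A finite-horizon Gronwall argument then gives
\[
\E\sup_{s\le t}\|u_\lambda(s)\|_H^2+\E\int_0^t(\Psi'_\lambda(u_\lambda),u_\lambda)_H\le C_t(1+\|x\|_H^2).
\]
The uniform-in-$x$ dependence is fine here, since the constant depends only on $t$ and on the structural data. Passing to the limit with Fatou's lemma yields the first inequality.

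\textbf{Step 2 (second inequality).} I take expectations in the It\^o formula \eqref{eq:uniform21} for $\int_\OO\Phi_\lambda(u_\lambda)$. The left side contains $\E\int_0^t\|\Phi'_\lambda(u_\lambda)\|_H^2$. On the right:
\begin{itemize}
\item the gradient term coming from $F'$ is bounded by $C_F\E\int_0^t\|\nabla u_\lambda\|^2$, which Step 1 controls by $C(1+\|x\|_H^2)$;
\item the Wiener trace term is bounded by $C\E\int_0^t(1+\|\Psi_\lambda(u_\lambda)\|_{L^1}+\|u_\lambda\|_H^2)$, using Lemma~\ref{lem:properties}(iii) exactly as in \eqref{eq:uniform23};
\item the jump second-order term is bounded by $C\E\int_0^t\big(1+\|\Psi_\lambda(u_\lambda)\|_{L^1}+\|u_\lambda\|_H^2+\lambda\|\Psi'_\lambda(u_\lambda)\|_H^2\big)$, via the convexity-of-$\Psi''$ splitting used after \eqref{eq:uniform22-2};
\item the initial energy satisfies $\int_\OO\Phi_\lambda(x)\le\|\Psi(x)\|_{L^1}+C(1+\|x\|_H^2)$.
\end{itemize}
The term $\lambda\|\Psi'_\lambda\|^2$ is absorbed into the left side for $\lambda$ small, after converting $\Phi'_\lambda$ to $\Psi'_\lambda$ up to $F(u_\lambda)$, which is controlled in $L^2$ by Step 1. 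The remaining $\E\int_0^t\|\Psi_\lambda(u_\lambda)\|_{L^1}$ is already bounded by $C(1+\|x\|_H^2)$ from Step 1, so no Gronwall argument on the energy is needed. Passing $\lambda\to0$ by weak lower semicontinuity gives the second inequality.

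\textbf{Main obstacle.} The delicate point is the jump second-order term. It requires evaluating $\Psi''_\lambda$ along the segment $u_\lambda+\theta J_\lambda$, and it produces the extra $\lambda\|\Psi'_\lambda\|^2$, which must be absorbed uniformly in $\lambda$. I would reuse verbatim the argument from the \textit{Second estimate}, via Lemma~\ref{lem:properties}(ii)--(iii), and only check that everything survives after taking expectations instead of $\frac r2$-powers.
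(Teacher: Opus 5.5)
Your proposal is correct and follows the paper's proof. The first bound comes from the It\^o identity for $\frac12\|u\|_H^2$ by keeping the monotone term: your convexity inequality $\Psi_\lambda(s)\le\Psi_\lambda'(s)s$ plays exactly the role of the paper's Fenchel--Young identity $r\Psi'(r)=\Psi(r)+\Psi^*(\Psi'(r))$ with $\Psi^*$ bounded below. The second bound comes from the free-energy It\^o formula, treated as in the \emph{Second estimate} but with expectations in place of Burkholder--Davis--Gundy.

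The differences are minor. You carry out both computations on the Yosida approximations $u_\lambda$ and pass to the limit via Fatou's lemma and weak lower semicontinuity, whereas the paper applies the It\^o formula directly to $u$ and to the singular functional $\mathcal H_0$. Your route is the more careful way to justify that step. You also rightly observe that the paper's Gronwall step in the second part is unnecessary, since the first bound already controls $\E\int_0^t\|\Psi(u(s))\|_{L^1(\OO)}\,\d s$.
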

\begin{proof}
    To prove \andrea{the first claim}, we start from \eqref{Ito_H_2}. \andrea{For any function $f: \mathbb R \to \mathbb R$, let $f^*$ denote its convex conjugate. Recalling that for any $r$ and $s \in \mathbb{R}$, the Fenchel--Young inequality states that 
    \[
    rs=\Psi(r)+\Psi^*(s) \Leftrightarrow s \in \partial \Psi(r),
    \]
    choosing $s = \Psi'(r)$ yields $r\Psi'(r)=\Psi(r)+\Psi^*(\Psi'(r))$ for all $r \in D(\Psi')$, since in our case the subdifferential is single-valued.}
   Using the fact that the stochastic integrals are martingales, thanks to estimate \eqref{stima_int} and Assumption \ref{hyp:F}, we thus infer 
   \begin{multline*}
         \dfrac{1}{2}\mathbb{E}\|u(t)\|^2_H + \varepsilon \mathbb{E} \int_0^t \|\nabla u(s)\|^2_{\b H} \: \d s +\mathbb{E}\int_0^t \|\Psi(u(s))\|_{L^1(\OO)} \: \d s + \mathbb{E}\int_0^t \int_{\OO}\Psi^*(\Psi'(u(s))) \: \d s
     \\
     \le \frac 12\|x\|^2_H  +  \left(\frac{\widetilde{C}_G}{2}+ \widetilde{C}_J\right)(1+  \mathbbm{1}_{L<\infty}|\OO| L^2)t +  \left( \sqrt{\widetilde C_F}+\left(\frac{\widetilde{C}_G}{2}+ \widetilde{C}_J \right)\mathbbm{1}_{L=+ \infty} \right)\mathbb{E}\int_0^t\left(1+\|u(s)\|_H\right)^2\, {\rm d}s.
   \end{multline*}
Since \luca{ $\Psi^*$ is bounded from below}, from \marghe{Lemma \ref{lem:apriori1}} \andrea{we infer the first claim}. Let us now prove \andrea{the second claim}.
 We apply the It\^o formula to the functional $\mathcal{H}_0$
evaluated at $v = u(\andrea{t})$ for any $\andrea{t} \geq 0$.
Arguing as in the \textit{Second estimate} of Section \ref{sec:unif_est}, we obtain, for any $t \ge 0$
\begin{equation*}
     \E \supp \|\Phi(u(\tau))\|_{L^1(\OO)} + \E\int_0^t \|\Phi'(u(s))\|^2_H \: \d s 
    \leq C \left[ 1 + \|\Phi(x)\|_{L^1(\OO)} +\E  \int_0^t \|\Phi(u(s))\|_{L^1(\OO)}  \: \d s \right].
\end{equation*}
From the Gronwall lemma and the first claim we thus infer 
\[
 \mathbb{E} \int_0^t \|\Phi'(u(s))\|_{H}^2\,{\rm d}s \le C(1+ \|\Phi(x)\|_{L^1(\OO)}),
\]
and \andrea{the proof is complete}.
\end{proof}
\bigskip
\textbf{Acknowldegments.} The present research has been supported by MUR, 
grant Dipartimento di Eccellenza 2023-2027.
A.D.P., L.S. and M.Z. are members of Gruppo Nazionale per l’Analisi Matematica, la Probabilità e le loro Applicazioni (GNAMPA), Istituto Nazionale di Alta Matematica (INdAM).
These authors gratefully  acknowledge the financial support of the project  ``Prot. P2022TX4FE\_02 -  Stochastic particle-based anomalous reaction-diffusion models with heterogeneous interaction for radiation therapy'' financed by the European Union - Next Generation EU, Missione 4-Componente 1-CUP: D53D23018970001. Furthermore, M.F. acknowledges the support of the state of Upper Austria and A.D.P. acknowledges support from the European Union (ERC,
NoisyFluid, No. 101053472).

\printbibliography
\end{document}